\documentclass[11pt]{amsart}
\usepackage{amssymb}
\usepackage{amsfonts}
\usepackage{graphicx}
\usepackage{epsfig}
\usepackage{amssymb}
\usepackage{amsmath}
\usepackage{amsfonts}
\usepackage{amscd}

\newtheorem{lem}{Lemma}

\newtheorem{nott}[lem]{Notation}

\def\br{\mathbb{R}}
\def\bc{\mathbb{C}}
\def\bh{\mathbb{H}}

\def\Span{\mathrm{Span}}

\newcommand{\bF}{{\mathbb F}}

\newcommand{\bo}{{\mathbb O}}

\newcommand{\im}{\operatorname{Im}}

\renewcommand{\cos}{cos}

\newcommand{\p}{\partial}

\newcommand{\ra}{\rightarrow}
\newcommand{\lra}{\longrightarrow}

\newcommand{\bs}{\backslash}
\newcommand{\ov}{\overline}
\newcommand{\col}{\!:\!}

\theoremstyle{plain}
\newtheorem{theorem}{Theorem}[section]
\newtheorem{proposition}[theorem]{Proposition}
\newtheorem{lemma}[theorem]{Lemma}
\newtheorem{corollary}[theorem]{Corollary}
\theoremstyle{definition}

\newenvironment{thA}{\begin{trivlist}\item[]{\bf Theorem  A\ }\begin{em}}
{\end{em}\end{trivlist}}

\newcommand{\C}{\mathbb{C}}

\let\cal\mathcal
\newcommand{\bR}{{\mathbb R}}
\def\Span{\mathrm{Span}}

\newcommand{\ch}{H_{\bc}^}
\newcommand{\fh}{H_{\bF}^}
\newcommand{\hh}{H_{\bh}^}
\newcommand{\oh}{H_{\bo}^2}

\def\Z{\mathbb{Z}}
\def\Q{\mathbb{Q}}
\def\B{\mathbb{B}}
\def\H{\mathbb{H}}

\def\F{\mathbb{F}}
\def\P{\mathbb{P}}
\def\I{\text{Im}}
\def\R{\text{Re}}

\def\h{\mathcal{H}}

\begin{document}
\title[Equidistribution]
        {Counting, mixing and Equidistribution of horospheres in geometrically finite
        rank one locally symmetric manifolds}
        \author{Inkang Kim}
        \date{}
        \maketitle

\begin{abstract}
In this paper we study  the equidistribution of expanding
horospheres in infinite volume geometrically finite rank one locally
symmetric manifolds and apply it to the orbital counting problem in
Apollonian sphere packing.
\end{abstract}
\footnotetext[1]{2000 {\sl{Mathematics Subject Classification.}}
51M10, 57S25.} \footnotetext[2]{{\sl{Key words and phrases.}}
Horospheres, rank one semisimple Lie group, Patterson-Sullivan
measure, geometrically finite group.} \footnotetext[3]{The author
gratefully acknowledges the partial support of NRF grant
(R01-2008-000-10052-0) and a warm support of IHES, IHP and HIM
during his stay.}
\section{Introduction}
In this paper, we study the equidistribution of expanding
horospheres in infinite volume geometrically finite locally
symmetric rank one manifolds with respect to Burger-Roblin measure.
As an application we apply it to the orbital counting of
geometrically finite groups.

A priori it is not clear how to count the growth of the number of
orbit points in $\F^{n+1}$ under the infinite co-volume group
$\Gamma\subset O_\F(n,1)$ where $O_\F(n,1)=SO(n,1),SU(n,1), Sp(n,1)$
depending on $\F=\br,\bc,\bh$. If $Q_\F(v_0)=0$ for a signature
$(n,1)$ quadratic form $Q_\F$, we give a quantitative estimate of
the asymptotic growth
$$\# \{v\in v_0 \Gamma|\ ||v||<K\}$$ for any norm
$||\cdot||$ on $\F^{n+1}$ with the control of the error term.
Controlling the error term is crucial to our application to the
counting of prime curvature spheres in Apollonian sphere packing.
This orbital counting theorem follows from the equidistribution of
expanding closed horospheres in geometrically finite groups. First
we show that for any $\psi\in C^\infty_c(\Gamma\backslash G)^K$, the
average of this function on a horosphere of height $y$ can be
explicitly estimated in terms of $L^2$-product of $\psi$ and
$\phi_0$, and $y^{D-\delta}$ where $\phi_0$ is a unique
eigenfunction of the Laplace operator with $L^2$-norm 1 of
eigenvalue $\delta(D-\delta)$, and $\delta$ is the critical exponent
of $\Gamma$, $D$ is the Hausdorff dimension of the ideal boundary of
the associated symmetric space. See Theorem \ref{equidistribution}.

The techniques involve the unitary representation theory, measure
theoretic approach in algebraic Lie groups, Patterson-Sullivan
measure on limit sets and some geometrical insights in rank one
space. We carry out the computation in  explicit coordinates,
so-called horospherical coordinates in rank one spaces. See sections
\ref{pre} and \ref{horo}. We outline necessary backgrounds as the
proof evolves in coming sections.

Let $X$ be a real, complex or quaternionic hyperbolic space with
curvature between $-4$ and $-1$, and $\Gamma \subset Iso(X)=G$ a
geometrically finite group whose critical exponent is $\delta> D/2$
 where $D$ is a Hausdorff dimension of $\partial
X$. Let $ G=KAN$ be
 a fixed Iwasawa decomposition introduced in section
\ref{pre}. In other words, $N$ is a (generalized) Heisenberg group,
$A$ is a one-parameter subgroup stabilizing a chosen geodesic
$(0,0,y)$ in horospherical coordinates (see section \ref{pre},
\ref{horo}), and $K$ is a maximal compact subgroup stabilizing
$(0,0,1)$.

The main theorem is:
\begin{theorem}  Let $\Gamma$ be a geometrically finite discrete subgroup in $SO(n,1)$, $SU(n,1),
Sp(n,1)$ with the critical exponent $\delta>\frac{D}{2}$ where $D$
is the Hausdorff dimension of the boundary  of the associated
symmetric spaces of the sectional curvature between $-4$ and $-1$.
Suppose $v_0$ is in the light cone such that $v_0\Gamma$ is
discrete, and the stabilizer of $v_0$ in $g_0^{-1}\Gamma g_0$ is in
$NM$. Then for any norm $||\cdot||$ on $\F^{n,1}$
$$\#\{v\in v_0\Gamma:||v||<T\}\sim c_{\phi_0}\delta^{-1}T^\delta\int_{ K} ||v_0(g_0^{-1}kg_0)||^{-\delta}dk.$$ If $||\cdot||$
is $g_0^{-1}Kg_0$-invariant, then
$$\#\{v\in
v_0\Gamma:||v||<T\}=c_{\phi_0}\delta^{-1}T^\delta||v_0||^{-\delta}(1+O(T^{-\delta'})).$$
Here $\delta'$ depends only on the spectral gap.
\end{theorem}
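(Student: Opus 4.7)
The plan is to convert the orbit count into an equidistribution statement on an expanding family of pieces of a single horosphere and then invoke Theorem~\ref{equidistribution}. After passing to the conjugate $\tilde\Gamma=g_0^{-1}\Gamma g_0$, in which the stabilizer $H:=\mathrm{Stab}_{\tilde\Gamma}(v_0)$ lies in $NM$, the orbit $v_0\Gamma$ is parametrised by $H\backslash\tilde\Gamma$. In the Iwasawa decomposition $g=nak$ with $a=a_t$, the vector $v_0$ is fixed by $N$ and scaled by $A$ as $v_0 a_t=e^{\alpha t}v_0$ for the natural weight $\alpha>0$; carrying the conjugating element $g_0$ through the calculation one obtains a formula of the form
\[
\|v_0\gamma\|=e^{\alpha t}\,\|v_0\,(g_0^{-1}kg_0)\|
\]
with $t$ the horospherical height of $\gamma$, so the counting region $\{\gamma:\|v_0\gamma\|<T\}$ is the expanding horoball of height $\alpha^{-1}(\log T-\log\|v_0(g_0^{-1}kg_0)\|)$ over $k$.

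To apply equidistribution I smooth the sharp cutoff. Pick $\psi\in C_c^\infty(\Gamma\backslash G)^K$ with $\int\psi=1$ supported in a small $K$-bi-invariant neighbourhood of $\Gamma e$; writing $\widetilde\psi$ for its $\Gamma$-periodic lift to $G$, a standard unfolding identity rewrites the smoothed count as
\[
\int_{H\backslash G}\mathbf{1}_{\|v_0 g\|<T}\Bigl(\int_H\widetilde\psi(hg)\,dh\Bigr)\,d(Hg),
\]
in which the inner integral is precisely the horospherical average that Theorem~\ref{equidistribution} controls: at height $y=e^{\alpha t}$ it equals $c_{\phi_0}\langle\widetilde\psi,\phi_0\rangle\,y^{D-\delta}+O(y^{D-\delta-\delta''})$ for some $\delta''>0$ furnished by the spectral gap. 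Parametrising $H\backslash G$ by $(t,Mk)\in\mathbb{R}\times M\backslash K$, the Haar density acquires the Jacobian $e^{\alpha D t}\,dt\,dk$ along the $A$-direction. Substituting the main term and integrating $t$ up to the cutoff gives, after a routine computation, the asserted leading order $c_{\phi_0}\delta^{-1}T^\delta\int_K\|v_0(g_0^{-1}kg_0)\|^{-\delta}\,dk$. When $\|\cdot\|$ is $g_0^{-1}Kg_0$-invariant, the factor $\|v_0(g_0^{-1}kg_0)\|$ reduces to $\|v_0\|$, the $K$-integration trivialises, and one recovers the sharper main term $c_{\phi_0}\delta^{-1}T^\delta\|v_0\|^{-\delta}$.

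The main difficulty will be the error term $O(T^{-\delta'})$ in the $K$-invariant case. Two sources of error must be balanced: the equidistribution remainder, which after integration in $t$ contributes $O(T^{\delta-\delta''})$, and the smoothing error from replacing $\mathbf{1}_{\|v\|<T}$ by its convolution against $\psi$, which mis-counts a shell of width comparable to the scale of $\psi$. The standard remedy is to use a family $\psi_\varepsilon$ of scale $\varepsilon=T^{-\eta}$ and to optimise $\eta$ so that both errors are of comparable polynomial order $T^{\delta-\delta'}$. This requires keeping track of the Sobolev-norm dependence of the error in Theorem~\ref{equidistribution}; the resulting $\delta'$ then depends only on $\delta''$, i.e.\ on the spectral gap, as claimed. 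For the general (non-invariant) norm I can dispense with the balancing and retain only the leading asymptotic, which gives the first statement of the theorem.
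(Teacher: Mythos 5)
Your plan follows the paper's own route: smooth the counting indicator, unfold $\langle F_T,\psi\rangle$ to an integral over the coset space $H\backslash G$ in horospherical coordinates, feed the inner horospherical average into Theorem~\ref{equidistribution}, and integrate the $y^{D-\delta}$ main term against the Jacobian $y^{-D-1}dy$ (equivalently $e^{\alpha D t}dt$) over the cutoff region $y>T^{-1}\|v_0(g_0^{-1}kg_0)\|$, then over $M\backslash K$; the two-sided replacement of $\mathbf 1_{B_T}$ by its mollification corresponds to the paper's sandwich between $\langle F_{(1\mp\epsilon)T},\Phi_\epsilon\rangle$. Two small slips should be fixed — the inner integral is over $H\backslash NM$ (the quotient horosphere), not the discrete group $H$ itself, and the equidistribution remainder decays like $y^{D-\delta+\delta''}$ rather than $y^{D-\delta-\delta''}$ — but the structure is right, and your explicit insistence on taking $\varepsilon=T^{-\eta}$ and balancing the Sobolev-norm growth of the remainder against the smoothing error is in fact more careful than the paper, which lets $\epsilon\to 0$ while asserting a uniform $O(T^{-\delta'})$ without tracking the $\epsilon$-dependence of the implied constant.
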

This theorem is a generalization of \cite{O} to a general rank one
case.\\
 {\bf Notation:}
$f(x)=O(x)$ implies that $\lim_{x\ra\infty}\frac{f(x)}{x}< \infty$.

{\bf Outline of a proof}: One introduces a continuous counting
function $F_T(g)=\sum_{\gamma\in (\Gamma\cap NM)\backslash \Gamma}
\chi_{B_T}(v_0 \gamma
 g),$ where $B_T=\{v\in v_0G: ||v||<T\}$. Specially $F_T(e)$ is the
 orbital counting function of $\Gamma$. Let
$\phi_\epsilon\in C_c^\infty(G)$ be a nonnegative function supported
on a small neighborhood $U_\epsilon$ of the identity element with
$\int_G \phi_\epsilon=1$. Define a function defined on
$\Gamma\backslash G$ (i.e., invariant under left $\Gamma$-action) by
$$\Phi_\epsilon(\Gamma g)=\sum_{\gamma\in
\Gamma}\phi_\epsilon(\gamma g).$$ One is interested to estimate
$$\langle F_T, \Phi_\epsilon\rangle=\int_G
\chi_{B_T}(v_0g)\Phi_\epsilon(g)dg$$$$=\int_{M\backslash K}
\int_{y>T^{-1}||v_0k||}\int_{n_x\in (\Gamma\cap NM)\backslash NM/M}
\psi_k(n_xa_y)dn y^{-D-1}dydk
$$ where $\psi_k$ is an average of $\Phi_\epsilon$ over a compact
group $M\subset K$ in Langland decomposition and $(\Gamma\cap
NM)\backslash NM/M$ denotes the projection into $\Gamma\backslash
G/M$. Geometrically $(\Gamma\cap NM)\backslash NM/M$ denotes the
quotient of a horosphere based at $\infty$ by the action of
$\Gamma\cap NM$. See Sections \ref{pre} and \ref{counting} for
details.

So it is important to estimate the average of a function over a
horosphere at height $0<y<1$, $\int_{n_x\in (\Gamma\cap
NM)\backslash NM/M} \psi(n_xa_y)dn$, for $\psi\in
L^2(\Gamma\backslash G/M)$. Here one uses the matrix coefficient
technique to draw Theorem \ref{equidistribution}
$$\int_{n_x\in (\Gamma\cap
NM)\backslash NM/M} \psi(n_xa_y)dn\sim y^{D-\delta}$$ for $\psi\in
L^2_c(\Gamma\backslash G)^K$, from which one deduces that
$F_T(e)\sim T^\delta$. The large part of the paper is devoted to
justify this.

First using a spectral gap, one can decompose $L^2(\Gamma\backslash
G)^K$ into $V_\delta$ where the bottom eigenfunction $\phi_0$ is a
unique $K$-invariant function, and the rest $V$ where $V$ does not
contain any complementary series $V_s,\ s\geq s_\Gamma$. So any
$\phi\in C^\infty(\Gamma\backslash G)^K\cap L^2(\Gamma\backslash
G/M)$ can be written as
 $$\phi=\langle \phi,\phi_0 \rangle \phi_0+\phi^\perp$$ and hence
$\langle a_y\phi_1, \phi_2\rangle= \langle \phi_1,
\phi_0\rangle\langle a_y\phi_0, \phi_2\rangle+ O(y^{D-s_\Gamma}
S_{2m}(\phi_1)S_{2m}(\phi_2))$. See Corollary  \ref{product}.

Secondly, we fix a positive $\eta\in C^\infty_c((\Gamma\cap
NM)\backslash NM/M)$ with $\eta=1$ on a bounded open set $B$ of
$(\Gamma\cap NM)\backslash NM/M$ and vanishes outside a small
neighborhood of $B$. Also for each $\epsilon<\epsilon_0$, let
$r_\epsilon$ be a nonnegative smooth function on $AN^-M$ whose
support is contained in
$$W_\epsilon=(U_\epsilon\cap A)(U_{\epsilon_0}\cap N^-)(U_{\epsilon_0}\cap
M)$$ and $\int_{W_\epsilon} r_\epsilon d\nu=1$. Finally define
$\rho_{\eta,\epsilon}$ on $\Gamma\backslash G$ which vanishes
outside $supp(\eta)U_{\epsilon_0}$ and for $g=n_xan^-m\in
supp(\eta)W_\epsilon$,
$$\rho_{\eta,\epsilon}(g)=\eta(n_x)r_\epsilon(an^-m).$$
This $\rho_{\eta,\epsilon}$ is introduced as a cut-off function to
estimate the average of any function $\phi$ in
$C^\infty_c(\Gamma\backslash G)^K$ over a horosphere at height $y$
in terms of $\epsilon$ and an $L^2$ inner product of $a_y\phi$ and
$\rho_{\eta,\epsilon}$. See Propositions \ref{basic} and
\ref{another}. Finally in Theorem \ref{equidistribution}, one
iterates the process to obtain the dominant term as the average of
$\phi_0$ over the horosphere at height $y$, which is $y^{D-\delta}$.

As an application to an Apollonian sphere packing in $\bR^3$, we
have
\begin{corollary}Given a bounded Apollonian sphere packing $\cal P$
in $\bR^3$, the number of spheres whose curvatures are less than $T$
grows asymptotically $\sim c T^\alpha$ for some positive $\alpha$
which is the Hausdorff dimension of the residual set.
\end{corollary}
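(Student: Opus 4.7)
The plan is to realize the counting of spheres in $\mathcal P$ by curvature as a counting problem for orbit points of a light-cone vector under the Apollonian group, and then to invoke the main theorem directly.

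First I would set up the Descartes--Soddy encoding. There is a nondegenerate integral quadratic form $Q_\bR$ on $\bR^5$ of signature $(4,1)$, the Descartes--Soddy form in dimension three, under which the Apollonian group $\Gamma$---generated by inversions in four of the five spheres of a fixed Soddy configuration---embeds as a discrete subgroup of $O_\bR(4,1)=\is(\bh^4)$. I choose an ideal tangency point $p_0$ of the Soddy configuration and let $v_0\in\bR^{4,1}$ be a nonzero lightlike vector representing $p_0$. The $\Gamma$-orbit $v_0\Gamma$, modulo the parabolic stabilizer of $v_0$, is in bijection with the set of spheres of $\mathcal P$, and for any norm $\|\cdot\|$ on $\bR^{4,1}$ the curvature of the sphere associated with $v_0g$ is comparable to $\|v_0g\|$ as $g$ ranges over coset representatives. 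In particular,
\[
\#\{S\in\mathcal P:\ \mathrm{curv}(S)<T\}\asymp\#\{v\in v_0\Gamma:\ \|v\|<T\}.
\]

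Next I would verify the hypotheses of the main theorem. The Apollonian group is geometrically finite in $\is(\bh^4)$; by Sullivan's theorem its critical exponent $\delta$ coincides with the Hausdorff dimension $\alpha$ of its limit set, and this limit set equals the residual set of $\mathcal P$. McMullen's numerical estimate gives $\alpha\approx 2.47$, hence $\delta>D/2=3/2$. The stabilizer of $v_0$ in $\Gamma$ consists of parabolic elements fixing $p_0$, and after conjugation by a $g_0\in G$ sending $p_0$ to the distinguished point at infinity it lies in $NM$. All hypotheses of the main theorem are therefore in place. Applying it gives
\[
\#\{v\in v_0\Gamma:\ \|v\|<T\}\sim c_{\phi_0}\delta^{-1}T^\delta\int_K\|v_0(g_0^{-1}kg_0)\|^{-\delta}\,dk,
\]
with a strictly positive leading coefficient since $\phi_0$ and the $K$-integrand are both strictly positive. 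Combined with the comparability of $\|v\|$ and curvature on the orbit, this produces the desired $\#\{S\in\mathcal P:\ \mathrm{curv}(S)<T\}\sim cT^\alpha$ with $\alpha=\delta$.

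The main obstacle is the combinatorial dictionary between the spheres of $\mathcal P$ and orbit points on the light cone: one must verify that $v_0\Gamma$ quotiented by the $\Gamma$-stabilizer of $v_0$ enumerates $\mathcal P$ exactly (or with a controlled finite multiplicity absorbable into the constant) and that the curvature and $\|v\|$ are comparable along the orbit so that the $\asymp$ in the first display refines to the desired $\sim$. The essential non-elementary analytic input is McMullen's lower bound $\alpha>3/2$ on the Hausdorff dimension of the Apollonian residual set, without which the spectral-gap hypothesis $\delta>D/2$ of the main theorem would fail.
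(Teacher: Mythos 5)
Your high‑level plan (encode the packing via the Descartes--Soddy form, reduce to orbital counting on the light cone, verify $\delta>D/2$, invoke the main theorem) is the same as the paper's. But there is a genuine gap in the reduction step, and it starts with the choice of $v_0$.

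You take $v_0$ to be a lightlike vector ``representing an ideal tangency point $p_0$'' and assert that $v_0\Gamma$, modulo the stabilizer of $v_0$, is in bijection with the set of spheres of $\mathcal{P}$, with curvature comparable to $\|v_0 g\|$. Neither claim is justified, and the first appears to be false. In the Descartes model the light‑cone vectors that naturally enumerate the packing are the \emph{curvature vectors} of Descartes (Soddy) configurations: the paper (following Kontorovich--Oh) takes $v_0=\xi$ with $Q(\xi)=0$ the curvature $5$-tuple of the initial configuration, so that $\xi\,\mathcal{A}^t$ is precisely the set of curvature vectors of all Descartes configurations in $\mathcal{P}$. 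The dictionary from orbit points to spheres is then the standard tree-structure fact that (away from the initial spheres) each sphere is the unique ``new'' sphere of a unique configuration, and its curvature is literally the maximal coordinate of the corresponding $v\in\xi\,\mathcal{A}^t$, so one gets an exact count with the $\max$-norm, not merely an $\asymp$ that must be promoted to $\sim$. By contrast, the orbit of a lightlike vector corresponding to a tangency point enumerates a $\Gamma$-orbit of tangency points, not spheres; each sphere contains infinitely many tangency points and each tangency point lies on two spheres, so there is no natural bijection to establish, and the comparability of $\|v_0 g\|$ with a sphere's curvature is not the straightforward coordinate reading available for curvature vectors. You flag the ``combinatorial dictionary'' as the main obstacle, but with your choice of $v_0$ that obstacle is not surmountable; with the curvature‑vector choice it is essentially immediate.

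Two further inaccuracies: the Apollonian group is generated by the swaps $S_1,\dots,S_5$ (equivalently, reflections in the dual spheres through the tangency points), not by ``inversions in four of the five spheres of a fixed Soddy configuration''; and the hypothesis $\delta>D/2$ does not require McMullen's numerics -- the paper observes directly that the residual set contains every $2$-sphere of the packing, hence $\delta\ge 2>3/2=D/2$, which is all that is needed.
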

Specially for asymptotic growth of the number of $k$-mutually
tangent spheres in $\br^3$ with prime curvatures, we obtain:
\begin{theorem}
Given a bounded primitive integral Apollonian sphere packing $\cal
P$ in $\bR^3$,  if $\pi^{\cal P}_k(T)$ denotes the number of
$k$-mutually tangent spheres whose curvatures are prime numbers less
than $T$, then
$$\pi^{\cal P}_k(T)\ll \frac{T^{\alpha}}{(\log T)^k}$$ for
$k\leq 5$.
\end{theorem}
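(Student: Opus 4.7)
The plan is to combine the orbital counting of the preceding corollary with a $k$-dimensional combinatorial upper-bound sieve, in the spirit of the affine sieve of Bourgain-Kontorovich-Sarnak. First, I would fix a base configuration $\mathbf{S}_0=(S_1,\dots,S_k)$ of $k$ mutually tangent spheres in $\mathcal{P}$; such a configuration exists precisely when $k\le 5=3+2$, since $\bR^3$ admits at most five mutually tangent spheres (Soddy-Descartes). The Apollonian group $\Gamma$ acts transitively on such $k$-tuples inside $\mathcal{P}$, so every such tuple is $\mathbf{S}_0\gamma$ with $\gamma\in H\bs\Gamma$, where $H$ is the pointwise stabilizer of $\mathbf{S}_0$. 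The $k$ curvatures read off as the values of $k$ fixed linear functionals $L_1,\dots,L_k$ at the orbit point $v_0\gamma\in\bR^{4,1}$, so the target count becomes
\[
\pi_k^{\mathcal{P}}(T)=\#\{\gamma\in H\bs\Gamma : |L_i(v_0\gamma)|<T \text{ and prime, } 1\le i\le k\}.
\]

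Dropping the primality condition, the preceding corollary (itself a consequence of the main theorem applied to the norm $\max_i|L_i|$) gives the unsifted count $N(T)=\#\{\gamma : \max_i|L_i(v_0\gamma)|<T\}\sim cT^\alpha$ with a power-saving error governed by $\delta'$. Running the same horospherical-equidistribution argument on the congruence cover $\Gamma(q)\bs G$ yields the analogous estimate for the count in each fixed residue class modulo $q$, uniformly for $q<T^\eta$ with $\eta>0$ fixed and small; this supplies the level-of-distribution input. One then applies the Selberg $\Lambda^2$-sieve (or Brun-Hooley) to the $k$-tuple $(L_1(v_0\gamma),\dots,L_k(v_0\gamma))$. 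Because $\Gamma$ is Zariski-dense in the ambient orthogonal group and satisfies strong approximation outside a finite set of primes, the local sifting density at almost every $p$ is $k/p$, so the sieve has dimension exactly $k$, and one obtains
\[
\pi_k^{\mathcal{P}}(T)\ll N(T)\prod_{p<T^{\eta/(ck)}}\!\left(1-\frac{k}{p}\right)\ll \frac{T^\alpha}{(\log T)^k}.
\]

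The hard part will be the uniformity in $q$ of the level-of-distribution estimate, which amounts to a congruence spectral gap for $\Gamma$: a uniform lower bound on the base eigenvalue of the Laplacian on $\Gamma(q)\bs G/K$ for squarefree $q$, in the style of Bourgain-Gamburd-Sarnak. It is this uniform spectral input that converts the explicit error exponent $\delta'$ of the main theorem into a usable level of distribution; without it one only recovers the soft bound $\pi_k^{\mathcal{P}}(T)\ll T^\alpha$. The restriction $k\le 5$ is geometric rather than analytic: it reflects only the Descartes-type constraint on mutually tangent spheres in $\bR^3$, and the sieve itself imposes no such cap.
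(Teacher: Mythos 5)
Your proposal matches the paper's argument in all essential respects: you parameterize by the orbit of the Descartes $5$-tuple under the Apollonian group (the paper uses the polynomial $f_k(x_1,\dots,x_5)=x_1\cdots x_k$ on the orbit $\xi\mathcal{A}^t$, which plays the role of your $k$ linear functionals), you identify the uniform congruence spectral gap of Bourgain--Gamburd--Sarnak/Salehi Golsefidy--Varj\'u as the source of the level of distribution, you recognize the local density $g_k(p)\sim k/p$ giving a dimension-$k$ sieve (the paper's Proposition on $g_i(d)$), and you close with a Selberg-type upper-bound sieve plus a Mertens-type estimate $\prod_{p<z}(1-k/p)\asymp(\log z)^{-k}$, which is exactly how the paper evaluates $\bigl(\sum_{d<\sqrt{D},\,d\mid P}h(d)\bigr)^{-1}\ll(\log T)^{-k}$ via sums of $k^{\omega(n)}/n$. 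Your observation that $k\leq 5$ is geometric rather than analytic is also the paper's implicit standpoint. The only minor deviation is your phrasing that $\Gamma$ acts transitively on $k$-tuples of mutually tangent spheres; what the paper actually uses is transitivity on Descartes $5$-configurations together with the fact that there are only $\binom{5}{k}$ ways to extract a $k$-subset of coordinates, which bounds $\pi_k^{\mathcal{P}}(T)$ by a bounded sum of orbital sieve counts --- but this does not affect the analysis.
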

Some of the lower bound seems to be known by experts like Sarnak
\cite{Sa, S1} for Apollonian circle packing. One can attempt a lower
bound for $r$-almost prime curvature spheres as follows. The
following corollary is a counting  of 5 spheres kissing each other
with certain properties, not the counting of individual sphere with
prime curvature. See the last section.
\begin{corollary}Given a bounded primitive integral Apollonian sphere packing $\cal
P$ in $\bR^3$, let $\pi_k^{\cal P}(T)^r$ denote the number of 5
spheres kissing each other (i.e. the number of orbits) among whose
at least $k$ curvatures are $r$-almost primes and all of whose
curvatures are less than $T$ where $r$ is a fixed positive integer
depending only on Apollonian packing. For any $k\leq 5$,
$$\frac{T^{\delta_\Gamma}}{(\log T)^k}\ll \pi_k^{\cal P}(T)^r.$$
\end{corollary}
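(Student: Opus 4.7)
The plan is to combine the main orbital counting theorem with the combinatorial affine sieve of Bourgain--Gamburd--Sarnak, exactly as in their treatment of Apollonian circle packings. Fix a base configuration of five mutually tangent spheres in $\cal P$ and a corresponding light-cone vector $v_0$; each orbit of five mutually tangent spheres in the packing is then parameterized by a coset in $(\Gamma\cap NM)\backslash\Gamma$, and the five curvatures are polynomial (in fact coordinate) functions $f_1,\dots,f_5$ of $v_0\gamma$ taking integer values because $\cal P$ is primitive integral. By the main theorem the total count of such orbits with every $|f_i|<T$ is $c\,T^{\delta_\Gamma}(1+O(T^{-\delta'}))$ with $\delta'>0$ controlled by the spectral gap.

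To set up the sieve I would first upgrade the orbital count to a level-of-distribution statement. For each squarefree $q$ consider the congruence cover $\Gamma(q)\backslash G$. Strong approximation for the Zariski closure of $\Gamma$ in $O(3,1)$ identifies $\Gamma/\Gamma(q)$ with the product of the reductions modulo the prime divisors of $q$, and the Bourgain--Gamburd--Sarnak expansion theorem provides a spectral gap for $L^2(\Gamma(q)\backslash G/K)$ uniform in $q$. Substituting this uniform gap into the matrix-coefficient argument that produced the main theorem, now applied on each congruence cover, yields equidistribution of $v_0\Gamma\cap\{\|\cdot\|<T\}$ among residue classes modulo $q$ with a fixed power-saving error, uniformly for $q\leq T^\theta$ for some explicit $\theta=\theta(\delta')>0$.

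With this level of distribution in hand, a combinatorial lower-bound sieve (Brun, or Richert's weighted sieve) applied to the product $f_{i_1}\cdots f_{i_k}$ of any chosen $k$ of the five curvatures produces, for $r$ fixed sufficiently large depending only on $\theta$,
$$\#\{\gamma\in(\Gamma\cap NM)\backslash\Gamma:\|v_0\gamma\|<T,\ f_{i_j}(\gamma)\text{ is $r$-almost prime for }j=1,\dots,k\}\gg\frac{T^{\delta_\Gamma}}{(\log T)^k},$$
each almost-prime condition costing one logarithmic factor and $r$-almost primes occurring with positive density in the sieved orbit. Summing over the $\binom{5}{k}$ choices of which $k$ of the five curvatures are required to be $r$-almost prime at worst inflates the implicit constant.

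The main obstacle is the spectral-gap input: establishing the Bourgain--Gamburd--Sarnak uniform gap for the Apollonian sphere-packing group modulo primes and squarefree moduli, so that the main theorem lifts to the congruence covers with uniform error. For Zariski-dense finitely generated subgroups of arithmetic lattices in $O(3,1)$ of critical exponent $\delta>D/2$ this is available through the work of Bourgain--Gamburd--Sarnak together with the refinements of Salehi Golsefidy--Varj\'u; granted this expander property, the remaining sieve argument is a routine adaptation of the circle-packing argument one dimension higher.
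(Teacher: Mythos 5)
Your plan is correct and follows essentially the same route as the paper: the main orbital counting theorem, a uniform spectral gap on congruence covers (in the paper via Bourgain--Gamburd--Sarnak, Salehi Golsefidy--Varj\'u, Breuillard--Green--Tao and Pyber--Szab\'o together with a flat-bundle transfer argument) giving a level-of-distribution statement with power-saving error uniform over square-free $d$, and a combinatorial lower-bound sieve applied to the product $f_k=x_1\cdots x_k$ of $k$ coordinate functions. The paper instantiates the sieve step via Theorem 11.13 of Friedlander--Iwaniec and computes the sieve density factor $\prod_{p\mid P(z)}(1-g(p))\asymp (\log z)^{-k}$ by a Mertens-type computation. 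One point your sketch elides but the paper must handle explicitly: to convert the sieved count $\#\{\gamma: (f_k(v_0\gamma),P(z))=1\}$ into an almost-prime count one must additionally restrict to coordinates $x_i>z=T^{\epsilon_0/4s}$, since coprimality to $P(z)$ says nothing about almost-primality of coordinates smaller than $z$; this requires subtracting an $O(z^{\delta_\Gamma})$ contribution from orbit points with a small coordinate and fixing $r$ as the least integer with $z^r>T$, so that $r$ depends on $\epsilon_0$ and $s$, i.e. on the spectral gap. Also a small slip of the pen: for packings in $\bR^3$ the Apollonian group sits in $O(4,1)$ (the Descartes form on $\bR^5$ has signature $(4,1)$), not $O(3,1)$.
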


For abundant literatures of this subject, see \cite{R1,R2} for
example.

{\bf Organization of the paper}: In sections 2 and 3, we give
preliminary backgrounds for rank one symmetric space and specially
introduce horospherical coordinates to calculate the Buseman
function explicitly. In section \ref{bottom}, we deal with the
bottom eigenfunction and its average over horospheres. This part is
essential for our orbital counting problem. In section
\ref{unitary}, we recall unitary representations of rank one
semisimple Lie groups and generalize Shalom's result on matrix
coefficents of spherical unitary representations of rank one group.
In section \ref{laplace} we give some consequences of unitary
representations using spectral gap theorem due to Hamenst{\"a}dt.
After section \ref{equid}, we closely follow the proofs of \cite{O}.
We record them for the reader's convenience. In the final section,
we are concerned with an integral Apollonian sphere packing, and
derive some asymptotic growth on the number of prime curvature
spheres using uniform spectral gap theorem due to
Bourgain-Gamburd-Sarnak, Varju-Salehi Gosefidy, Breuillard-Green-Tao
and Pyber-Szabo.
\section{Preliminaries}\label{pre}
The symmetric spaces of $\br$-rank one of non-compact type are the
hyperbolic spaces $\fh n$, where $\bF$ is either the real numbers
$\br$, or the complex numbers $\bc$, or the quaternions $\bh$, or
the Cayley numbers $\bo$; in the last case $n=2$. They are
respectively called as real, complex, quaternionic and octonionic
hyperbolic spaces (the latter one $\oh$ is also known as the Cayley
hyperbolic plane). Algebraically these spaces can be described as
the corresponding quotients: $SO(n,1)/SO(n)$, $SU(n,1)/SU(n)$,
$Sp(n,1)/Sp(n)$ and $F_4^{-20}/Spin (9)$ where the latter group
$F_4^{-20}$ of automorphisms of the Cayley plane $\oh$ is the real
form of $F_4$ of rank one.

Following Mostow \cite{Mo} and using the standard involution
(conjugation) in $\bF$, $z\to\bar z$, one can define projective
models of the hyperbolic spaces $\fh n$ as the set of negative lines
in the Hermitian vector space $\bF ^{n,1}$, with Hermitian structure
given by the indefinite $(n,1)$-form
$$
\langle z,w\rangle =z_1\overline w_1+\cdots+z_n\overline
w_n-z_{n+1}\overline w_{n+1}\,.
$$
 However, it does not work for the Cayley plane since $\bo$ is
non-associative, and one should use a Jordan algebra of $3\times 3$
Hermitian matrices with entries from $\bo$ whose group of
automorphisms is $F_4$, see \cite{Mo}.

Another models of $\fh n$ use the so called horospherical
coordinates  based on foliations of  $\fh n$ by horospheres centered
at a fixed point $\infty$ at infinity $\p\fh n$ which is
homeomorphic to $(n\dim_{\br}\bF-1)$-dimensional sphere. Such a
horosphere can be identified with the nilpotent group $N$ in the
Iwasawa decomposition $KAN$ of the automorphism group of $\fh n$.
The nilpotent group $N$ can be identified with the product
$\bF^{n-1}\times\im \bF$ (see \cite{Mo}) equipped with the
operations:
$$
(\xi,v)\cdot (\xi',v')=(\xi+\xi',v+v'-2\im \langle\langle \xi,\xi'
\rangle\rangle )\quad \text{and}\quad (\xi,v)^{-1}=(-\xi,-v)\,,
$$
where $\langle\langle ,\rangle\rangle$ is the standard Hermitian
product in $\bF^{n-1}$, $\langle\langle z,w\rangle\rangle =\sum
z_i\ov{w_i}$. The group $N$ is a 2-step nilpotent Carnot group with
center $\{0\}\times \im\bF \subset \bF ^{n-1}\times \im\bF$, and
acts on itself by the right translations $T_h(g)=g\cdot h\,, h,g\in
N$.

Now we may identify
$$
\fh n\cup \p \fh n\bs\{\infty\} \lra N \times [0,\infty)=
\bF^{n-1}\times \im\bF\times [0,\infty)\,,
$$
and call this identification the {\it ``upper half-space model"} for
$\fh n$ with the natural horospherical coordinates $(\xi,v,u)$. In
these coordinates, the above right action of $N$ on itself extends
to an isometric action (Carnot translations) on the $\bF$-hyperbolic
space in the following form:
$$T_{(\xi_0,v_0)} \col (\xi,v,u)\longmapsto (\xi_0+\xi\,,v_0+v-2\im
\langle\langle \xi,\xi_0\rangle\rangle \,,u)\,,
$$
where $(\xi,v,u)\in \bF^{n-1}\times \im\bF\times [0,\infty)$. Hence
rank one space $H^n_\F$ can be written  in horospherical coordinates
as
$$\h^n_\F=\{(z,t, y)|z\in\F^{n-1}, t\in \im\bF, 0<y\in \bR\}.$$ An hyperbolic isometry  fixing $0$ and
$\infty$ acts as
$$(z,t,y)\ra (r z O_\F, r^2 t, r^2 y)$$ where $O_\F$ is an element of
$M$. Here the group $MA$ fixing the origin of $N$ and $\infty$ is
$(U(n-1)\cdot U(1))\times\br$ in $\ch n$, $(Sp(n-1)\cdot
Sp(1))\times
{\mathbb R}$ in $\hh n$, and $Spin(7)\times\br
$ in $\oh$. Note that once the height $y$ is fixed, which is a
horosphere of $H^n_\bF$, $NM$ leaves invariant the horosphere, i.e.,
$NM$ acts on each orbit of $N$ by
$$ (O_\bF,(\chi,v,0)):(z,t,y)\ra (    (\chi+z)O_\bF, v+t-2\im \langle\langle  t,\chi\rangle\rangle, y)                      .$$
Later when we use the notation $(\Gamma \cap NM)\backslash NM/M$, it
denotes the projection into $\Gamma\backslash G/M$.  Since $G/M$ is
the unit tangent bundle $T^1(G/K)$, $(\Gamma \cap NM)\backslash
NM/M$ is the quotient of a horosphere under the action of $\Gamma
\cap NM$.

It is not difficult to show that $(dt-2\I \langle\langle z, dz
\rangle\rangle)^2+ \langle\langle dz, dz \rangle\rangle$ is
invariant under $NM$ where $\langle\langle\ , \rangle\rangle$ is the
standard positive definite Hermitian product and  $ANM$ is a Borel
subgroup fixing $\infty$. Since a metric on $H^n_\F$ can be written
as $g_y\oplus dy^2$ where $g_y$ is defined on a horosphere along a
geodesic ending $\infty$, one can give a metric \cite{KP}
$$ds^2=\frac {dy^2+  (dt-2\I \langle\langle z, dz
\rangle\rangle)^2+ 4y\langle\langle dz, dz \rangle\rangle}{y^2}.$$
This metric has the sectional curvature between $-1$ and
$-\frac{1}{4}$ so that the volume entropy for real hyperbolic
$n$-manifold is $(n-1)/2$, the volume entropy of complex hyperbolic
$n$-manifold is $n$ and the volume entropy of quaternionic
hyperbolic manifold is $2n+1$.

The volume form on $H^n_\F$ can be written as
\begin{eqnarray}
\frac{2^{n-1}}{y^{(n+1)/2}}dVol_z dy\ (\text{real\
hyperbolic})\end{eqnarray}
\begin{eqnarray}\label{volume}
\frac{4^{n-1}}{y^{n+1}} dVol_zdt dy\ (\text{complex\
hyperbolic}),\end{eqnarray}\begin{eqnarray}
\frac{16^{n-1}}{y^{2n+2}} dVol_zdt dy\ (\text{quaternionic \
hyperbolic})
\end{eqnarray}
 where $dVol_z$ is a
volume form on $\F^{n-1}$. The set $\{x=(z,t)\}$ is identified with
the Nilpotent group $N$ in Iwasawa decomposition $ANK$ of
$Iso(H^n_\F)$ and its right action is
$$(z,t,y)(w,s)=(z+w,t+s-2\text{Im}\langle\langle z,w\rangle\rangle,y).$$ The  ideal boundary at infinity of $H^n_\F$ is $N\cup \infty$. As
usual $A$ will be an one-dimensional group translating along
$\{(0,0,y)|y>0\}$ and its right action is given by
$(z,t,y)a_r=(rz,r^2t,r^2y)$ so that any point $(z,t,y)\in H^n_\F$ is
$$(0,0,1)(\frac{1}{\sqrt y}z,\frac{1}{ y}t)a_{\sqrt y}.$$ In this
way we will identify a point $(z,t,y)$ in $H^n_\F$ with
$[(\frac{1}{\sqrt y}z,\frac{1}{ y}t),a_{\sqrt y}]\in N \times A$ in
a fixed Iwasawa decomposition $KNA$.

Note that the metric $g_y\oplus dy^2$ is such that $
g_y=e^{2y}g_0\oplus e^{4y}g_0$ on $\F^{n-1}$ part and $\I \F$ part,
respectively,  $e^{-2y}g_y$ converges to nonriemannian metric,
Carnot-Caratheodory metric on $N$. It's distance is given by
$$d_N((x,t),(w,s))=|(x,t)(w,s)^{-1}|=(|x-w|^4+(t-s+2\I
\langle\langle x, w \rangle\rangle )^2)^{1/4}.$$ One can even define
$N$-invariant metric on horospherical model $\h^n$ by
$$d((x,t,y), (w,s,z))=(|x-w|^4+(t-s+2\I
\langle\langle x, w \rangle\rangle )^2+|y-z|^2)^{1/4}.$$

\section{Busemann function and Horosphere}\label{horo}
In this section we normalize the metric so that the sectional
curvature is between $-1$ and $-\frac{1}{4}$ and we fix a reference
point $o=(0,0,1)$. $H^n_\F$ can be realized a unit ball $\B^n$ in
$\F^n$. Two points $(0',-1)$ and $(0',1)$ will play a special role.
There is a natural map from $\B^n$ to $\P(\F^{n,1})$, where
$\F^{n,1}$ is equipped with
$$ \langle z,w\rangle =z_1\overline
w_1+\cdots+z_n\overline w_n-z_{n+1}\overline w_{n+1}\,.
$$ defined
as
$$(w',w_n)\ra (w',w_n,1).$$ From $\B^n$ to the horospherical model
$\h^n$, one define the coordinates change as
$$(z',z_n)\ra (\frac{z'}{1+z_n},\frac{2\I z_n}{|1+z_n|^2},\frac{1-|z_n|^2-|z'|^2}{|1+z_n|^2}).$$
It's  inverse from the horospherical model $\h^n$ to $\P\F^{n,1}$ is
given by
$$(\xi,v,u)=[(\xi,\frac{1-|\xi|^2-u+v}{2},\frac{1+|\xi|^2+u-v}{2})],$$
where $v$ is pure imaginary, i.e., $iv$ in complex case, and
$iv_i+jv_2+kv_3$ in quaternionic case. According to this coordinate
change, $(0',1)=[(0',1,1)]$ corresponds to the identity element
$(0',0)$ in Heisenberg group, $(0',-1)=[(0',-1,1)]$ to $\infty$, and
$(0',0)$ to $(0,0,1)$.

In the rest of the section, we carry out the calculations only in
complex hyperbolic space but it goes through the quaternionic case.
 If $x,y\in H^n_\C$ and $X,Y\in \C^{n,1}$ correspond to
$x,y$, the distance between them is
$$\cosh^2(\frac{d(x,y)}{2})=\frac{\langle X,Y\rangle \langle Y,X\rangle}{\langle X,X\rangle \langle Y,Y\rangle},$$ where
$\langle X,Y\rangle=\sum_{i=1}^n x_i\bar y_i-x_{n+1}\bar y_{n+1}$. A
Busemann function based at $\xi$ is defined as
$$B_\xi(z)=\lim_{t\ra \infty}(d(z,\gamma_t)-t)$$ where $\gamma_t$ is
a unit speed geodesic starting from $o$ and ending $\xi\in \partial
H^n_\C$. First we calculate the Busemann function based at
$\infty=(0',-1,1)$. $\gamma_t$ can be chosen as
$$\gamma(t)=(0',-\tanh (t/2), 1)\subset \C^{n,1}.$$ Then a straightforward calculation
shows that
$$d(z,\gamma_t)\ra t +\log \frac{|z_n+1|^2}{1-\langle\langle z, z\rangle\rangle},$$ where $z=(z',z_n)\in
\B^n$ and $\langle\langle z, w\rangle\rangle=\sum z_i\bar w_i$ is
the standard positive definite Hermitian product. We denote it as a
double bracket whereas $\langle\ , \ \rangle$ is $(n,1)$ Hermitian
product on $\C^{n,1}$. Then
$$B_\infty(z)=\log \frac{|z_n+1|^2}{1-\langle\langle z,
z\rangle\rangle}.$$ If we denote $Q=(0',-1,1)$ and $Z=(z,1)$ in
$\C^{n,1}$, it is easy to show that
\begin{eqnarray}\label{Busemann}
e^{-B_Q(z)}=\frac{-\langle Z,Z\rangle}{\langle Z, Q\rangle \langle
Q, Z \rangle}.
\end{eqnarray}

Using elliptic isometry fixing $o=(0,0,1)\in \h^n$, one can send $Q$
to any other point in the ideal boundary. Since it preserves
Busemann function and Hermitian inner product, above formula holds
for any $Z,Q$. Using this formula, it is easy to check that
$e^{-B_\infty(z)}=y$ for $z=(\xi,v,y)$ in horospherical coordinates.
This explains the last height coordinate in horospherical
coordinates.

We are interested in $e^{-B_Q(z)}$ for $Q\neq \infty$. Let
$Q=(\xi,v)$ and $z=(x,t,y)\in \h^n$. These will correspond in
$\C^{n,1}$ to $(x,\frac{1-|x|^2-y+it}{2},\frac{1+|x|^2+y-it}{2})$
and $(\xi,\frac{1-|\xi|^2+iv}{2},\frac{1+|\xi|^2-iv}{2})$
respectively. A direct calculation using equation (\ref{Busemann})
gives
$$e^{-B_Q(z)}=\frac{4y}{|2 \langle\langle x,
\xi\rangle\rangle-|\xi|^2-|x|^2-y+i(t-v)|^2}.$$ Using the relation
$\langle\langle x-\xi, x-\xi
\rangle\rangle=|x-\xi|^2=|x|^2+|\xi|^2-2\R \langle\langle x,
\xi\rangle\rangle$, above formula becomes
\begin{eqnarray}\label{distance}
e^{-B_Q(z)}=\frac{4y}{(|x-\xi|^2+y)^2+(t-v+2\I \langle\langle x,
\xi\rangle\rangle)^2}.
\end{eqnarray}
Note that the denominator is comparable with the distance defined in
section \ref{pre}.

\section{Opposite Nilpotent group}\label{opposite}
We fixed a Iwasawa decomposition $ANK$ so that $N$ is a 2-step
nilpotent group which is a Heisenberg group. To describe an opposite
Nilpotent group $N^-$ for later use, we introduce another
coordinates changes. To define a Hermitian form of signature $(n,1)$
one can equally use
$$\left[
\begin{matrix}
              0 & 0  & 1\\
              0 & I_n & 0 \\
              1 & 0 & 0 \end{matrix}\right]$$ for the product.
              Somehow this matrix simplifies the calculations the
              most. In this context, a coordinate change $\phi$ from the
              Horospherical model $\h^n$ to $\P(\F^{n,1})$ is given
              by
$$\phi(\zeta,v,u)=[(-|\zeta|^2-u+v)/2,\zeta,1]$$ see \cite{KP}.

 One can easily show that the right translation by $(\tau,t)$, i.e.,
 the map
 $(z,s)\ra (z,s)(\tau,t)$
 corresponds to matrix multiplication
$$\left[\begin{matrix} \frac{-|z|^2+s}{2}& z & 1 \end{matrix}\right]\left[\begin{matrix} 1& 0 & 0 \\
                      -\tau^* & I  & 0 \\
                     \frac{-|\tau|^2+t}{2} & \tau & 1
                     \end{matrix}\right].$$
   Since we consider right actions instead of left actions,
                       we can define the opposite
                       Nilpotent group $N^-$ as
$$N^-=\{\left[\begin{matrix} 1& 0 & 0 \\
                       -\tau^* & I  & 0\\
                       \frac{-|\tau|^2+t}{2} & \tau & 1
                       \end{matrix}\right]\}$$ whereas $N$ consists
                       of upper triangular matrices.
Then using the multiplication rules in section \ref{pre} one can
show that
$$n_x a_y=a_yn_{x/y}, n^-_x a_y=a_y n^-_{yx}.$$ This will be used in
section \ref{bottom}. In these coordinates, the origin of the
Heisenberg group corresponds to $(0,0,1)$ in $\P(\F^{n,1})$ and it
is the stabilized by the right action of $NM$. This fact will be
used in section \ref{counting}.

\section{Digression to a general Riemannian geometry}\label{digression} Let $M$ be a
Riemannian manifold, $u$ a function such that $|\nabla u|=1$. Then
\begin{proposition}
$$\triangle u(x)=\text{ mean\ curvature\ at\ x\ of}\
u^{-1}(u(x)).$$ If $u$ is a Busemann function on the rank one
symmetric space $X$, $u^{-1}(u(x))$ is a horosphere and $\triangle
u(x)=D$ where $D$ is the Hausdorff dimension of $\partial X$.
\end{proposition}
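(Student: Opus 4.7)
The plan is to treat the two assertions in sequence. For the Riemannian identity I would set up a local orthonormal frame adapted to the level hypersurface $\Sigma=u^{-1}(u(x))$. Since $|\nabla u|=1$, the gradient $\nabla u$ is a unit normal to $\Sigma$, so pick a frame $e_1,\dots,e_{n-1},e_n$ near $x$ with $e_n=\nabla u$ and the remaining vectors tangent to $\Sigma$. Writing $\triangle u=\mathrm{div}(\nabla u)=\sum_i\langle\nabla_{e_i}\nabla u,e_i\rangle$, the $i=n$ term vanishes because $\langle\nabla_{\nabla u}\nabla u,\nabla u\rangle=\tfrac12(\nabla u)(|\nabla u|^2)=0$, while for $j<n$ the expressions $\langle\nabla_{e_j}\nabla u,e_j\rangle$ are exactly the diagonal entries of the second fundamental form of $\Sigma$ with respect to the unit normal $\nabla u$. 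Summing yields the trace of the shape operator, i.e.\ the mean curvature of $\Sigma$ at $x$.

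For the second assertion I would specialize to the Busemann function $u=B_\xi$ on the rank one symmetric space $X$. It is classical that $B_\xi$ is smooth with $|\nabla B_\xi|=1$ (both because $X$ is Hadamard of strictly negative curvature and because $B_\xi$ is real-analytic in the rank one symmetric setting), and its level sets are by definition the horospheres centered at $\xi$. The first part then identifies $\triangle B_\xi(x)$ with the mean curvature of the horosphere through $x$. Because the horospherical subgroup $N$ acts on each horosphere transitively by isometries, and the one-parameter subgroup $A$ normalizes $N$ and permutes horospheres isometrically, this mean curvature is a single constant depending only on $X$, and the remaining task is to identify that constant with $D$.

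For that final identification I would compute in the horospherical coordinates $(z,t,y)$ from Section \ref{pre} with $\xi=\infty$. There $B_\infty=-\log y$, the metric is block-diagonal in $y$ versus the horospherical directions (no cross term involves $dy$), so $g^{yy}=y^2$, and the volume element is $\sqrt g=c\,y^{-D-1}$ for a constant $c$, as recorded in equation (\ref{volume}) and its real and quaternionic analogues. Consequently $\nabla B_\infty=-y\,\partial_y$ (consistent with $|\nabla B_\infty|=1$), and since $u=-\log y$ depends only on $y$ one obtains
\[
\triangle B_\infty=\frac{1}{\sqrt g}\,\partial_y\bigl(\sqrt g\,g^{yy}\,\partial_y u\bigr)=\frac{1}{c y^{-D-1}}\,\partial_y\bigl(-c\,y^{-D}\bigr)=D.
\]
The main (and really only) obstacle in this strategy is the bookkeeping that matches the exponent in $\sqrt g$ with the Hausdorff dimension $D$ of $\partial X$; this is exactly the normalization content of Section \ref{pre} ($D=(n-1)/2$ real, $D=n$ complex, $D=2n+1$ quaternionic), after which the one-line calculation above concludes the proposition.
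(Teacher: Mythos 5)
Your treatment of the first identity follows essentially the same route as the paper: set up an orthonormal frame adapted to the level set $\Sigma=u^{-1}(u(x))$ with $\nabla u$ as unit normal, observe the normal--normal term of $\mathrm{div}(\nabla u)$ vanishes, and identify the remaining sum with the trace of the second fundamental form. Your way of killing the $i=n$ term (using $\langle\nabla_{\nabla u}\nabla u,\nabla u\rangle=\tfrac12(\nabla u)|\nabla u|^2=0$ directly) is a touch more economical than the paper's, which establishes the stronger fact $\nabla_{\nabla u}\nabla u=0$ via symmetry of the Hessian before specializing; both are fine, and any sign discrepancy with the paper's $\mathrm{div}\xi=-\sum\langle\nabla_{e_i}\xi,e_i\rangle$ is just a convention choice.

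For the second assertion, you and the paper part ways in method: the paper simply cites Besson--Courtois--Gallot for $\triangle B_\xi=D$, whereas you compute it from scratch in the horospherical model, using $B_\infty=-\log y$, the block-diagonality of the metric in $y$, and $\sqrt{g}\propto y^{-D-1}$ from equation \eqref{volume}. Your direct computation has the advantage of being self-contained and of making explicit exactly where the exponent $D$ comes from (the volume form), at the cost of some bookkeeping about the normalization of the curvature and hence of $D$ --- which you handle consistently by staying in the $-1\leq K\leq -1/4$ normalization where $D=(n-1)/2,\,n,\,2n+1$. One small point worth making explicit if you write this up: the claim $\triangle u=\frac{1}{\sqrt g}\partial_y(\sqrt g\,g^{yy}\partial_y u)$ uses not only that $u$ depends on $y$ alone and $g^{y\mu}=0$ for $\mu\neq y$, but also that $\sqrt g$ depends only on $y$ (otherwise there would be extra $\partial_\mu\sqrt g$ terms to rule out); this holds here because the horospherical volume element is $c\,y^{-D-1}\,dVol_z\,dt\,dy$ with $c$ constant. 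With that noted, your argument is correct and genuinely replaces the paper's appeal to \cite{BCG} by an elementary coordinate check.
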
\begin{proof}
$$\triangle u=div \nabla u=div \xi,\ \xi=\nabla u.$$
Since $|\xi|=1$, $\langle \nabla_v\xi,\xi\rangle=0$ for any $v\in
TM$. Also since $\nabla \xi\in End(TM)$ is symmetric (the
antisymmetric part of $\nabla\xi$ is $d(du)$),
$$\langle \nabla_v\xi , w\rangle=\langle \nabla_w\xi, v\rangle,$$
hence $\langle \nabla_\xi \xi, w\rangle=\langle
\nabla_w\xi,\xi\rangle=0$ for any $w\in TM$.  So $\nabla_\xi \xi=0$.

Let $e_i$ be orthonormal basis of $T_xM$ with $e_1=\xi$. Then
$$div\xi=-\sum_{i=1}^n \langle \nabla_{e_i} \xi,
e_i\rangle=-\sum_{i=2}^n\langle \nabla_{e_i} \xi,
e_i\rangle=-\sum_{i=2}^n II(e_i,e_i)$$ where $II$ is a second
fundamental form on $u^{-1}(u(x))$.  The second claim is well-known
\cite{BCG} (page 638-639).
\end{proof}
Note that for $f:\bR\ra \bR$ and $u:M\ra \bR$, using $div (g
V)=\langle \nabla g, V \rangle+ g div V$ for a smooth function $g$
and a vector field $V$ on $M$,
\begin{eqnarray}\label{st}
\triangle(f(u))=f''(u)|\nabla u|^2+f'(u)\triangle u,\end{eqnarray}
and if $|\nabla u|=1$, $\triangle(f(u))=f''(u)+f'(u)\triangle u$.

For example if $X$ is a symmetric space of rank one and $u$ is a
distance to a fixed hyperplane, $\triangle u=\phi(u)$ where $\phi$
is a solution to
$$\phi(u)=Tr(II(u))$$ where $II:\bR\ra (n-1)\times(n-1)\text{
symmetric\ matrices}$ satisfying $II'+II^2+ Riem=0$. Here
$TX=\F\nabla u\oplus(\F\nabla u)^\perp$ and $(\nabla
u)^\perp=(Im\F)\nabla u\oplus (\F\nabla u)^\perp$. $ Riem$ preserves
this decomposition and
$$Riem=-4 Id\ \text{on}\ (Im\F)\nabla u,$$
$$=-Id\ \text{on}\ (\F\nabla u)^\perp.$$

The solution of $II'+II^2+ Riem=0$ is
$$\phi_1(u)Id_{(Im\F)\nabla u}+\phi_2(u)Id_{(\F\nabla u)^\perp}$$
where $\phi_1'+\phi_1^2-4=0$ and $\phi_2'+\phi_2^2-1=0$. Then
$\phi_1(u)=\frac{2}{\tanh(2u)},\ \phi_2(u)=\tanh(u)$.

In conclusion
$$\triangle
u=\phi(u)=Tr(II(u))=(dim\F-1)\frac{2}{\tanh(2u)}+(dim\F)(n-1)\tanh(u).$$
For real hyperbolic 3-dimensional case, if $u$ is a distance to
$H^2_\bR$, $\triangle u=2\tanh(u)$. So if $\phi$ is an average of
the bottom eigenfunction over $H^2_\bR$ at the distance $u$,
$$\triangle \phi=\phi''(u)+\phi'(u) 2\tanh(u)$$
and since
$$\triangle \phi=-\delta(2-\delta)\phi$$
$\phi(u)$ grows asymptotically
$$ce^{-(2-\delta)u}+de^{-\delta u}.$$

If $u$ is a Busemann function on the symmetric space $X$ of rank
one, then by equation (\ref{st}) and $\triangle u=D$
$$\triangle(f(u))=f''(u)+Df'(u),$$ where $D$ is a Hausdorff
dimension of $\partial X$. Note that the solution space of the above
differential equation is generated by $e^{-c u}$ and $e^{-(D-c)u}$
for some $c$. Hence if $\phi_0^N$ is an average of the bottom
eigenfunction $\phi_0$ on a geometrically finite manifold
$\Gamma\backslash X$ over a horosphere at the distance $u$,
$$\phi_0^N(u)=c_{\phi_0}e^{-(D-\delta)u}+d_{\phi_0}e^{-\delta u}.$$
But in later sections, we will use $- \triangle$ as the Laplace
operator so that
$$\phi_0^N(u)=c_{\phi_0}e^{(D-\delta)u}+d_{\phi_0}e^{\delta u}.$$

In  section \ref{bottom}, we show that $c_{\phi_0}\neq 0$. If
$\Gamma_0\subset\Gamma$ is a subgroup of index $n$, then $1/\sqrt n
\tilde\phi_0$ is a unit $L^2$ norm eigenfunction of $-\triangle$
with eigenvalue $\delta(D-\delta)$ where $\tilde\phi_0$ denote the
lift to $\Gamma_0\backslash X$ of $\phi_0$ on $\Gamma\backslash X$.
Then the average of $1/\sqrt n\tilde\phi_0$ over the horosphere at
the distance $u$ is
$$\frac{1}{\sqrt n}\phi_0^N(u).$$  Specially the constant
$c_{\phi_0^{\Gamma_0}}$ is equal to
$\frac{1}{\sqrt{[\Gamma:\Gamma_0]}}c_{\phi_0}$. This will be used in
Section \ref{application}.

\section{Bottom eigenfunction $\phi_0$}\label{bottom}
Let $X$ be a rank one symmetric space and fix a origin $(0,0,1)=o\in
X$. Let $\Gamma\backslash X$ be  geometrically finite with a
critical exponent $\delta> D/2$ where $D$ is the Hausdorff dimension
of $\partial X$. Let $B_\zeta$ be a Busemann function based at
$\zeta$ normalized that $B_\zeta(o)=0$. The Patterson-Sullivan
measures $\theta_x,\ x\in X$ satisfies:\\
The measures $\theta_x$ and $\theta_y$ are mutually absolutely
continuous and
\begin{eqnarray}\label{Patterson1}
\frac{d\theta_x}{d\theta_y}(\zeta)=e^{-\delta(
B_\zeta(x)-B_\zeta(y))}
\end{eqnarray}
and for any $\gamma\in \Gamma$
\begin{eqnarray}\label{Patterson2}
\gamma_*\theta_x=\theta_{\gamma x}.
\end{eqnarray}

The function defined by
$$\phi_0(x,t,y)=\int_{\Lambda_\Gamma} e^{-\delta B_\theta(x,t,y)} d\theta$$ where $d\theta=d\theta_o$ is a fixed
Patterson-Sullivan measure associated to a fixed reference point
$o\in X$, descends to a positive $L^2$-function on $\Gamma\backslash
X$ whose eigenvalue with respect to the Laplace operator
$-\triangle$ is $\delta(D-\delta)$ where $D$ is the Hausdorff
dimension of $\partial X$. We always normalize it that its
$L^2$-norm is 1. The function $\phi_0$ is given by in horospherical
coordinates according to equation (\ref{distance})
$$\phi_0(x,t,y)=\int_{\Lambda_\Gamma} (\frac{4y}{(|x-\zeta|^2+y)^2+(t-v+2\I \langle\langle x,
\zeta\rangle\rangle)^2})^\delta d\theta(\zeta,v)$$
$$=4^\delta y^{-\delta}\int_{\Lambda_\Gamma} (\frac{1}{(\frac{|x-\zeta|^2+y}{y})^2+(\frac{t-v+2\I \langle\langle x,\zeta\rangle\rangle}{y})^2})^\delta
d\theta(\zeta,v).$$  Note that the formula for real hyperbolic case
seems a bit different from this one but it is due to the fact that
the $H^n_\bR\subset H^n_\F$ sits as a Klein model \cite{Gol}(not
Poincar\'e model) and that the curvature is $-1/4$ in this section.
After we normalize the metric back to between $-4$ and $-1$, all the
formulas will turn out right. See section \ref{unitary}.

Let
$$\phi_0^N(y)=\int_{(\Gamma\cap NM)\backslash NM/M}\phi_0(x,t,y) dn $$ be the average over a horophere where
$dn=dtdx$. This is independent of the choice of a fundamental domain
since $dn$ is $NM$-invariant and $\phi_0$ is $\Gamma$-invariant.

We will be interested in horospheres whose images are closed in
$\Gamma\backslash H^n_\F$. Specially we will consider horospheres
whose base points are either parabolic fixed points or points
outside the limit set of $\Gamma$, see Lemma \ref{closed}.
\begin{proposition}We identify $N$ with a horosphere based at $\infty$ (in fact, the orbit of $N$) and suppose $(\Gamma\cap NM)\backslash
NM/M$, which is a quotient of the horosphere by $\Gamma\cap NM$, has
a closed image in $\Gamma\backslash X$. Then $\phi_0^N(y)\gg
y^{D-\delta}$ for all $0<y\ll 1$ where $D$ is the Hausdorff
dimension of $\partial X$, i.e., $(n-1)/2, n, 2n+1$ for real,
complex and quaternionic hyperbolic space respectively.
\end{proposition}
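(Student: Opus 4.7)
The plan is to unfold $\phi_0^N(y)$ via Fubini--Tonelli, apply the Heisenberg-type dilation dictated by the $A$-action to the resulting horospherical integral of the Busemann kernel, and read off the $y^{D-\delta}$ behavior. Let $F\subset N$ be a fundamental domain for the action of $\Gamma\cap NM$ on $N$; by nonnegativity,
$$\phi_0^N(y)=\int_{\Lambda_\Gamma}\int_F e^{-\delta B_\zeta(x,t,y)}\,dn\,d\theta(\zeta).$$
I would then fix a compact $K\subset\Lambda_\Gamma\setminus\{\infty\}$ with $\theta(K)>0$ contained in a bounded open $V\subset F$ that embeds into $(\Gamma\cap NM)\backslash N$. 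The closed-horosphere hypothesis permits this in both cases: when $\infty\notin\Lambda_\Gamma$, the group $\Gamma\cap NM$ is finite and $\Lambda_\Gamma$ is a bounded subset of $N$, so any small ball around a limit point works; when $\infty$ is a bounded parabolic fixed point, the Patterson--Sullivan density is nonatomic at $\infty$ in the range $\delta>D/2$, so $\theta$ places positive mass on compact subsets of $\Lambda_\Gamma\setminus\{\infty\}$ sitting in the interior of $F$. Restricting gives
$$\phi_0^N(y)\geq\int_K\int_V e^{-\delta B_\zeta(x,t,y)}\,dn\,d\theta(\zeta).$$

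For the inner integral I would write $\zeta=(\xi,v)\in\bF^{n-1}\times\I\bF$ and perform the anisotropic scaling
$$u=(x-\xi)/\sqrt{y},\qquad s=(t-v+2\I\langle\langle x,\xi\rangle\rangle)/y.$$
Its Jacobian on $N$ is $y^{(n-1)\dim\bF/2+\dim\I\bF}=y^D$ in each of the three cases (matching the volume forms in (\ref{volume})), while the denominator in the explicit formula (\ref{distance}) collapses to $y^2\bigl((|u|^2+1)^2+|s|^2\bigr)$. Hence
$$\int_V e^{-\delta B_\zeta(x,t,y)}\,dn=4^\delta y^{D-\delta}\int_{V(\zeta,y)}\frac{du\,ds}{\bigl((|u|^2+1)^2+|s|^2\bigr)^\delta},$$
where $V(\zeta,y)$ is the image of $V$ under the scaling and exhausts $N$ as $y\to 0$.

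Two things remain. First, the limiting integral $I_\delta=\int_N du\,ds/\bigl((|u|^2+1)^2+|s|^2\bigr)^\delta$ is finite: integrating $s\in\I\bF$ first produces a factor $(|u|^2+1)^{-2\delta+\dim\I\bF}$, and convergence of the remaining integral over $\bF^{n-1}$ requires $4\delta>2\dim\I\bF+(n-1)\dim\bF$, which one checks case by case to be precisely $\delta>D/2$. Second, for $\zeta\in K$ sitting in the compact interior of $V$, the scaled domains $V(\zeta,y)$ uniformly swallow any prescribed compact subset of $N$ as $y\to 0$, so dominated convergence yields
$$\int_V e^{-\delta B_\zeta(x,t,y)}\,dn\geq\tfrac12\cdot 4^\delta I_\delta\cdot y^{D-\delta}$$
uniformly in $\zeta\in K$ for $0<y<y_0$. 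Integrating over $K$ against $d\theta$ then produces $\phi_0^N(y)\gg y^{D-\delta}$ with implicit constant $\tfrac12\cdot 4^\delta I_\delta\cdot\theta(K)$. The main obstacle is the geometric step of producing $K\subset V\subset F$ in the cusp case; it rests on the standard nonatomicity of the Patterson--Sullivan measure at a bounded parabolic fixed point under $\delta>D/2$, after which the analytic core (scaling, convergence of $I_\delta$, and uniform exhaustion) is routine.
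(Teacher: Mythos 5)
Your proposal is correct and takes a route genuinely different from the paper's, so a comparison is in order. Where the paper handles the bounded parabolic case by an unfolding argument---using quasi-invariance of the Patterson--Sullivan measure (properties (\ref{Patterson1}) and (\ref{Patterson2})) to turn the double integral over $F_N\times\Lambda_\Gamma$ into one over $N\times F_\Lambda$, and then applying the Heisenberg scaling to the full $N$-integral---you bypass the unfolding entirely: you restrict to a compact $K\subset\Lambda_\Gamma\setminus\{\infty\}$ of positive Patterson--Sullivan mass sitting inside a bounded open $V$ that injects into the quotient, apply the same anisotropic scaling to the inner integral, and invoke monotone exhaustion of $N$ by the dilated domains $V(\zeta,y)$ uniformly over $\zeta\in K$. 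Your scaling computation matches the paper's: the Jacobian exponent $(n-1)\dim\bF/2+\dim\I\bF$ equals $D$ in all three cases, the denominator collapses to $y^2\bigl((|u|^2+1)^2+|s|^2\bigr)$, and the convergence condition $4\delta>2\dim\I\bF+(n-1)\dim\bF$ is exactly $\delta>D/2$. What the paper's unfolding buys is a cleaner identity that also feeds the matching upper bound $\phi_0^N(y)=O(y^{D-\delta})$ in Proposition \ref{bound}; your restriction trick yields only the lower bound, but that is all the present proposition asserts, and your argument treats the two cases ($\infty\notin\Lambda_\Gamma$ versus $\infty$ a bounded parabolic fixed point) uniformly, which is a genuine simplification. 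One remark: you should be careful about the justification that $\theta(\{\infty\})<1$ in the parabolic case. Your appeal to nonatomicity ``in the range $\delta>D/2$'' is a bit loose as stated---the relevant fact is that geometrically finite groups are of divergence type and hence have nonatomic Patterson--Sullivan measure (so in particular $\theta$ gives positive mass to compact subsets of $\Lambda_\Gamma\setminus\{\infty\}$). The paper's unfolding argument implicitly uses the same fact, so this is not a gap unique to your approach, but it deserves an explicit citation.
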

\begin{proof} We carry out the calculation in complex hyperbolic
case and indicate the difference in the other two cases.\\
 CASE I)
If $\infty\notin \Lambda_\Gamma$, then $NM\cap \Gamma$ is trivial
and
$$\phi_0^N(y)=\int_N 4^\delta y^{-\delta}\int_{\Lambda_\Gamma} (\frac{1}{(\frac{|x-\zeta|^2+y}{y})^2+(\frac{t-v+2\I \langle\langle x,\zeta\rangle\rangle}{y})^2})^\delta
d\theta(\zeta,v) dn.$$ Change the variables to
$$z=\frac{x}{\sqrt y},s=\frac{t}{y},\ x\in \C^{n-1}, t\in \bR,$$
to get
$$dtdx=y^n ds dz$$ and
\begin{eqnarray}\label{estimate}
\phi_0^N(y)=4^\delta y^{n-\delta}\int_{\C^{n-1}\times \bR}\\
\int_{\Lambda_\Gamma} \frac{dsdz}{[(|z-\zeta'|^2+1)^2+|s-v'+2\I
\langle\langle z, \zeta' \rangle\rangle|^2]^\delta} d\theta.
\end{eqnarray}

Change $(w,t)=(z,s)(\zeta',v')^{-1}$. Then since $dn$ is
$N$-invariant
$$\phi_0^N(y)=4^\delta y^{n-\delta}\int_N\int_{\Lambda_\Gamma} \frac{dtdw}{[|w|^4+|t|^2+2|w|^2+1]^\delta} d\theta$$
$$=4^\delta y^{n-\delta}\int_N
\frac{dtdw}{[|w|^4+|t|^2+2|w|^2+1]^\delta}.$$ The equality holds
since $d\theta$ is a probability measure. One can show that the
integral converges for $\delta>\frac{n}{2}$ to a nonzero number. In
this calculation, note that for real hyperbolic case, there is no
$t$ factor, so from $z=x/\sqrt y$, $dx=(\sqrt y)^{n-1} dz$, this
gives $y^{\frac{n-1}{2}-\delta}$ in front.  In quaternionic case,
$dx=(\sqrt y)^{4n-4}, dt=y^3 ds$, to get $y^{2n+1-\delta}$ in front.
CASE II below is similar.


CASE II) $\infty\in \Lambda_\Gamma$\\
By the theorem of \cite{B}, $\infty$ is a bounded parabolic fixed
point, so $\Gamma\backslash (\Lambda_\Gamma- \infty)$ is a compact
set. Hence it is bounded. Let $F_\Lambda=(\Gamma\cap
NM)\backslash\Lambda_\Gamma\subset F_N=(\Gamma\cap NM)\backslash
NM/M$ be fundamental sets under the left action of $\Gamma\cap NM$.
Note that for a fixed $F_\Lambda\subset F_N$ and $X\in H^n_\F,
\zeta\in F_\Lambda$, by the property (\ref{Patterson2}) of
Patterson-Sullivan measure,
$$(n^{-1}_*\theta_X)(\zeta)=\theta_{n^{-1}X}(\zeta)=\theta_X(n\zeta)$$
for any $n\in NM\cap \Gamma$. Also by the property
(\ref{Patterson1}) of Patterson-Sullivan measure
$$\frac{d\theta_X}{d\theta_o}(n\zeta)=e^{-\delta B_{n\zeta}(X)},$$
$$\frac{d\theta_{n^{-1}X}}{d\theta_o}(\zeta)=e^{-\delta
B_\zeta(n^{-1}X)}.$$ This implies that for $\zeta\in F_\Lambda$
$$\int_{n
F_\Lambda} \int_{F_N} e^{-\delta B_{n\zeta}(x,t,y)} dn
d\theta(n\zeta)=\int_{ F_\Lambda} \int_{n^{-1}F_N} e^{-\delta
B_{\zeta}(n^{-1}(x,t,y))} dn d\theta(\zeta).$$

 Hence,
$$\phi_0^N(y)=\int_{F_N}\int_{\Lambda_\Gamma} e^{-\delta
B_{\theta}(x,t,y)} d\theta dn=\sum_{n\in \Gamma\cap NM}\int_{n
F_\Lambda} \int_{F_N} e^{-\delta B_{\theta}(x,t,y)} dn d\theta$$
$$=\sum_{n\in \Gamma\cap NM}\int_{
F_\Lambda} \int_{n^{-1}F_N} e^{-\delta B_{\theta}(x,t,y)} dn
d\theta=\int_{F_\Lambda}\int_N e^{-\delta B_{\theta}(x,t,y)} dn
d\theta.$$ In terms of the coordinates
\begin{eqnarray}\label{bounded}
\phi_0^N(y)= 4^\delta y^{n-\delta}\\
\int_{N}\int_{(\Gamma\cap NM)\backslash\Lambda_\Gamma}
\frac{dsdz}{[(|z-\zeta'|^2+1)^2+|s-v'+2\I \langle\langle z, \zeta'
\rangle\rangle|^2]^\delta} d\theta,
\end{eqnarray}
 where $(\zeta',v')\in
a_{\frac{1}{\sqrt y}}F_\Lambda$.
A similar estimates holds to conclude that
$$\phi_0^N(y)\gg y^{n-\delta}.$$
\end{proof}

Fix generators $v_1,\cdots,v_k$ in $NM/M$ corresponding to the axes
of screw motions of $NM\cap \Gamma$ so that $v_1,\cdots,v_k$
together with $v_{k+1},\cdots,v_{2n-1}$ are basis of $NM/M$. Denote
$N^\perp$ the subspace generated by $v_{k+1},\cdots,v_{2n-1}$. Let
$F_\Lambda\subset B\subset F_N$ be an open set such that
\begin{enumerate}
\item if $\infty\notin \Lambda_\Gamma$ then
$\epsilon_0(B)=\inf_{u\in F_\Lambda,x\in B^c}|x-u|_N>0$

\item if $\infty$ a bounded parabolic fixed point then
$\epsilon_0(B)=\inf_{u\in F_\Lambda,x\in B^c}|x-u|_{N^\perp}>0$
where $B^c=F_N\setminus B$.
\end{enumerate}
For such an open set $B$
\begin{proposition}\label{bound}If $\delta>\frac{D}{2}$, $\phi_0^N(y)=\int_B \phi_0(x,t,y)
dn+O_{\epsilon_0(B)}(y^\delta)$ and $\phi_0^N(y)=O(y^{D-\delta})$.
\end{proposition}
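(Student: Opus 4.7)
The plan is to split $\phi_0^N(y)$ into the contributions from $B$ and from $B^c=F_N\setminus B$: the separation hypothesis will force the second piece to be $O_{\epsilon_0(B)}(y^\delta)$, while the global bound $\phi_0^N(y)=O(y^{D-\delta})$ will follow by revisiting the rescaling identity used in the preceding proposition to extract the lower bound.

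For the $O_{\epsilon_0(B)}(y^\delta)$ estimate, I would start from the explicit formula
$$\phi_0(x,t,y)=4^\delta y^\delta\int_{\Lambda_\Gamma}\frac{d\theta(\zeta,v)}{[(|x-\zeta|^2+y)^2+(t-v+2\im\langle\langle x,\zeta\rangle\rangle)^2]^\delta}$$
and perform the $t$-integration first. After the substitution $u=t-v+2\im\langle\langle x,\zeta\rangle\rangle$ and rescaling by $|x-\zeta|^2+y$, one obtains the elementary bound
$$\int_\bR\frac{dt}{[(|x-\zeta|^2+y)^2+(t-v+2\im\langle\langle x,\zeta\rangle\rangle)^2]^\delta}\ll (|x-\zeta|^2+y)^{1-2\delta},$$
valid for $\delta>1/2$. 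The separation $|x-\zeta|\ge \epsilon_0(B)$ in Case I (or $|x-\zeta|_{N^\perp}\ge \epsilon_0(B)$ in Case II, after the $\Gamma\cap NM$-unfolding already carried out in equation (\ref{bounded})) then reduces the problem to bounding $\int_{|w|\ge \epsilon_0}(|w|^2+y)^{1-2\delta}\,dw$ on $\bF^{n-1}$, which converges precisely because $\delta>D/2$ and is uniformly bounded for $y<1$. Combined with the overall prefactor $y^\delta$, this delivers $\int_{B^c}\phi_0\,dn=O_{\epsilon_0(B)}(y^\delta)$.

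For the second assertion, I would reuse the rescaling $(z,s)=(x/\sqrt{y},t/y)$ and the Heisenberg translation $(w,\sigma)=(z,s)(\zeta',v')^{-1}$ carried out in equations (\ref{estimate})--(\ref{bounded}), which rewrites
$$\phi_0^N(y)=4^\delta y^{D-\delta}\int_{F_\Lambda}\left(\int_N\frac{dw\,d\sigma}{(|w|^4+|\sigma|^2+2|w|^2+1)^\delta}\right)d\theta(\zeta,v).$$
The inner integral is a finite constant independent of $(\zeta,v)$ by $N$-invariance and the assumption $\delta>D/2$, and the mass $\theta(F_\Lambda)$ is finite since $F_\Lambda$ is compact in both the convex-cocompact case and (by \cite{B}) the bounded-parabolic case. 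This yields $\phi_0^N(y)=O(y^{D-\delta})$ with the same constant that produced the matching lower bound. The main obstacle I anticipate is the bookkeeping in the parabolic case, where $F_N$ is noncompact and $B^c$ has infinite $dn$-measure: one must verify that the unfolding identity transports the $N^\perp$-separation into a genuinely convergent integral over the universal cover $N$, rather than only over the fundamental domain $F_N$, and that the insensitivity of the hypothesis to the central direction $\im\bF$ is harmless because the $t$-integration has already been executed before the spatial estimate is invoked.
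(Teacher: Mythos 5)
Your argument for the second assertion, $\phi_0^N(y)=O(y^{D-\delta})$, is essentially the paper's: rescale by $a_{1/\sqrt y}$, use the Heisenberg invariance of $dn$, invoke (for the parabolic case) the unfolding identity that converts the sum over $\Gamma\cap NM$ into an integral over all of $N$, and observe that $\theta(F_\Lambda)$ is finite. That part is fine.

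The argument for the error term $O_{\epsilon_0(B)}(y^\delta)$ has a genuine gap, and it comes from misreading the separation hypothesis. Condition (1) preceding Proposition \ref{bound} requires $\epsilon_0(B)=\inf_{u\in F_\Lambda,\,x\in B^c}|x-u|_N>0$, where $|\cdot|_N$ is the Carnot--Carath\'eodory gauge $d_N((x,t),(\zeta,v))=\bigl(|x-\zeta|^4+(t-v+2\im\langle\langle x,\zeta\rangle\rangle)^2\bigr)^{1/4}$ introduced in Section \ref{pre}. This controls a \emph{mixture} of the spatial and central directions; it does not force a spatial gap $|x-\zeta|\geq\epsilon_0(B)$. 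Points $(x,t)\in B^c$ can have $x$ arbitrarily close to the spatial shadow of $F_\Lambda$ provided the central coordinate $t$ is far. Your step of executing the $t$-integration over all of $\im\bF$ discards exactly the information that lives in that central separation, and the resulting integral $\int (|x-\zeta|^2+y)^{1-2\delta}\,dx$ is then not uniformly bounded on the true $x$-domain: near $x=\zeta$, for $\delta>D/2$ this integral scales like $y^{D/2-\delta}$ rather than $O(1)$, so the contribution from that region is $y^\delta\cdot y^{D/2-\delta}\cdots=y^{D-\delta}$ (the main term), not $y^\delta$. The same objection applies to Case II: the $N^\perp$-separation does not deliver a gap purely in the $\bF^{n-1}$ variable.

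The argument can be repaired: split $B^c$ (after the translation $w=(z,s)(\zeta',v')^{-1}$) into a spatially-separated part $\{|w|\gtrsim\epsilon_0\}$, where your bound works, and a centrally-separated part $\{|w|\lesssim\epsilon_0,\ |\sigma|\gtrsim\epsilon_0^2\}$, where you must instead restrict the $t$-integral to the tail $|\sigma|\geq c\,\epsilon_0^2$ before bounding in $x$. Each piece gives an $\epsilon_0$-dependent constant times $y^\delta$. That two-case decomposition is what the paper encodes all at once by passing to Heisenberg polar coordinates and integrating $(|w|_N^4)^{-\delta}$ over $\{|w|_N\geq\epsilon_0/\sqrt y\}$, where the rescaled radius $\epsilon_0/\sqrt y\to\infty$ converts the Heisenberg gap into a genuine decay rate $(\epsilon_0/\sqrt y)^{2(D-2\delta)}$ that cancels the $y^{D-\delta}$ prefactor down to $y^\delta$. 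So the paper's rescale-then-estimate route keeps the gauge metric intact throughout; your integrate-$t$-first route is shorter when there really is a spatial gap, but as written it relies on a hypothesis the statement does not grant.
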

\begin{proof}We give  a proof in complex hyperbolic case but as in the previous Proposition, the other
cases are similar. When $\infty\notin \Lambda_\Gamma$, using
equation (\ref{estimate}), we want to estimate
$$4^\delta y^{n-\delta}\int_{B^c}\int_{\Lambda_\Gamma}
\frac{dsdz}{[(|z-\zeta'|^2+1)^2+|s-v'+2\I \langle\langle z, \zeta'
\rangle\rangle|^2]^\delta} d\theta.$$

Since
$$\frac{1}{[(|z-\zeta'|^2+1)^2+|s-v'+2\I \langle\langle z, \zeta'
\rangle\rangle|^2]^\delta}$$$$\leq \frac{1}{[|z-\zeta'|^4+|s-v'+2\I
\langle\langle z, \zeta' \rangle\rangle|^2]^\delta}
=\frac{1}{[d_N((z,s), (\zeta',v'))^4]^\delta}  ,$$
 Since $dn$ is invariant under $N$, the above
integral is
$$\leq 4^\delta y^{n-\delta} \int_{|w|_N\geq \epsilon_0/\sqrt y}
\frac{dn}{(|w|_N^4)^\delta} $$ where $w=(z,s)(\zeta',v')^{-1}$ in
Heisenberg group. Let $w=(x,t)$ and write $dtdx=r^{2n-3}drdt dS$
where $dS$ is a volume form on unit sphere in $\mathbb{R}^{2n-2}$.
Then
$$4^\delta y^{n-\delta} \int_{|w|_N\geq \epsilon_0/\sqrt y}
\frac{dn}{(|w|_N^4)^\delta} \leq 4^\delta y^{n-\delta}
\int_{(r^4+t^2)^{1/4}\geq \epsilon_0/\sqrt y}
\frac{r^{2n-3}drdtdS}{(r^4+ t^2)^\delta}$$
$$\leq C y^{n-\delta}\int_{r\geq \epsilon_0/\sqrt y}\int_{\sqrt t\geq  \epsilon_0/\sqrt y}\frac{r^{2n-3}drdt}{(r^4+t^2)^\delta}. $$
Letting $t=\tan\theta r^2$, and for $\delta>\frac{n}{2}$ it becomes
$$y^{n-\delta}\int_{r\geq \epsilon_0/\sqrt
y}\frac{r^{2n-1}dr}{(r^4)^\delta}\int_{\frac{y^{3/4}}{\epsilon_0^{2/3}}\geq
\tan \theta} \frac{ d\theta}{\cos^2\theta(1+\tan^2\theta)^\delta}\ll
y^{\delta}.$$
As before we have
$$\phi_0^N(y)=4^\delta y^{n-\delta}
\int_{r,t \geq 0} \frac{r^{2n-3}drdtdS}{(r^4+ t^2+1)^\delta}$$
$$= C y^{n-\delta}\int_{r\geq 0}
\int_{t\geq 0}\frac{r^{2n-3}drdt}{(r^4+2r^2+t^2+1)^\delta}. $$ Hence
if $4\delta-2-2n+3>1,$ (i.e. $\delta>\frac{n}{2}$) then by letting
$t=\tan\theta \sqrt{(r^4+2r^2+1)}$ the integrals converge to a
nonzero number to conclude that
$$\phi_0^N(y)=O(y^{n-\delta}).$$

When $\infty$ is a bounded parabolic fixed point, the similar
estimates holds. Using
$$\int_{B^c}\int_{\Lambda_\Gamma} e^{-\delta
B_{\theta}(x,t,y)} d\theta dn=\sum_{n\in \Gamma\cap NM}\int_{n
F_\Lambda} \int_{B^c} e^{-\delta B_{\theta}(x,t,y)} dn d\theta$$
$$=\sum_{n\in \Gamma\cap NM}\int_{
F_\Lambda} \int_{n^{-1}B^c} e^{-\delta B_{\theta}(x,t,y)} dn
d\theta=\int_{F_\Lambda}\int_{\cup_{n\in \Gamma\cap NM}nB^c}
e^{-\delta B_{\theta}(x,t,y)} dn d\theta,$$  and following the
equation (\ref{bounded}), we want to estimate
$$
4^\delta y^{n-\delta}\int_{\cup_{n\in \Gamma\cap
NM}nB^c}\int_{(\Gamma\cap NM)\backslash\Lambda_\Gamma}
\frac{dsdz}{[(|z-\zeta'|^2+1)^2+|s-v'+2\I \langle\langle z, \zeta'
\rangle\rangle|^2]^\delta} d\theta,$$ where $(\zeta',v')\in
a_{\frac{1}{\sqrt y}}F_\Lambda$. The same estimation as in
$\infty\notin \Lambda_\Gamma$ gives
$$\int_{B^c}\int_{\Lambda_\Gamma} e^{-\delta
B_{\theta}(x,t,y)} d\theta dn\ll y^\delta,$$ to get
$$\phi_0^N(y)=\int_{B}\int_{\Lambda_\Gamma} e^{-\delta
B_{\theta}(x,t,y)} d\theta dn + \int_{B^c}\int_{\Lambda_\Gamma}
e^{-\delta B_{\theta}(x,t,y)} d\theta dn$$
$$\ll\int_{B}\int_{\Lambda_\Gamma} e^{-\delta
B_{\theta}(x,t,y)} d\theta dn + y^\delta.$$ Also in this case the
similar estimates give that for $\delta>\frac{n}{2}$
$$\phi_0^N(y)=O(y^{n-\delta}).$$
\end{proof}

We fixed Iwasawa decomposition $ANK$ so that $N$ is a Heisenberg
group.

 Let $N^-$ be the opposite Nilpotent group to $N$ so that
$$N\times A\times N^- \times M\ra G$$ is a diffeomorphism around a
neighborhood of $e$ and $d\nu$ is a smooth measure on $AN^-M$ so
that $dn\otimes d\nu$ is a Haar measure $d\mu$ on $G$. Fix a left
invariant metric $d_G$ on $G$ and $U_\epsilon$ is an
$\epsilon$-neighborhood of $e$ in $G$. Since $A\times N\times K\ra
G$ is a diffeomorphism and hence a bi-Lipschitz map around the
neighborhood of $e$,  there exists $l>0$ such that $U_\epsilon$ is
contained in $A_{l\epsilon}N_{l\epsilon}K_{l\epsilon}$ once we fix
some $\epsilon_0$ and take $\epsilon \leq \epsilon_0$. We fix a
positive $\eta\in C^\infty_c((NM\cap\Gamma)\backslash NM/M)$ with
$\eta=1$ on a bounded open set $B$ of $F_N$ so that
$\epsilon_0(B)>0$ as in the previous Proposition \ref{bound} and
vanishes outside a small neighborhood of $B$ so that
$$\phi_0^N(y)=\int_{B} \phi_0(n,y)\eta(n) dn+O(y^\delta).$$

Shirinking $\epsilon_0$ if necessary, we further assume that
$$supp(\eta)\times (U_{\epsilon_0}\cap AN^-M)\ra \Gamma\backslash G$$ is a
bijection to its image.

For each $\epsilon<\epsilon_0$, let $r_\epsilon$ be a nonnegative
smooth function on $AN^-M$ whose support is contained in
$$W_\epsilon=(U_\epsilon\cap A)(U_{\epsilon_0}\cap N^-)(U_{\epsilon_0}\cap M)$$ and $\int_{W_\epsilon} r_\epsilon d\nu=1$. Finally define
$\rho_{\eta,\epsilon}$ on $\Gamma\backslash G$ which vanishes
outside $supp(\eta)U_{\epsilon_0}$ and for $g=n_xan^-m\in
supp(\eta)W_\epsilon$,
$$\rho_{\eta,\epsilon}(g)=\eta(n_x)r_\epsilon(an^-m).$$ Then
\begin{proposition}\label{basic}For small $\epsilon\ll \epsilon_0$ and for
$y<1$, regarding $\phi_0\in C(\Gamma\backslash G)^K$
$$\phi_0^N(y)=\langle a_y\phi_0,\rho_{\eta,\epsilon}
\rangle_{L^2}+O(\epsilon y^{D-\delta})+O(y^\delta).$$
\end{proposition}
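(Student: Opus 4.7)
The plan is to unfold $\langle a_y\phi_0,\rho_{\eta,\epsilon}\rangle$ in the local chart $\mathrm{supp}(\eta)\times W_\epsilon\hookrightarrow\Gamma\backslash G$ (a bijection onto its image by the choice of $\epsilon_0$), approximate $\phi_0$ at $n_x a n^- m a_y$ by its value at $n_x a_y$, and then identify the resulting integral with $\phi_0^N(y)$ via Proposition~\ref{bound}. Writing the Haar measure as $dg = dn\otimes d\nu$ and using that $\rho_{\eta,\epsilon}$ is supported on $\mathrm{supp}(\eta)\,U_{\epsilon_0}$, the inner product unfolds as
\begin{equation*}
\langle a_y\phi_0,\rho_{\eta,\epsilon}\rangle = \int_{\mathrm{supp}(\eta)}\eta(n_x)\int_{W_\epsilon}r_\epsilon(an^-m)\,\phi_0(n_x a n^- m a_y)\,d\nu(an^-m)\,dn_x.
\end{equation*}

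First I would simplify the argument of $\phi_0$. Since $m\in M\subset K$ commutes with $a_y\in A$ and $\phi_0$ is right $K$-invariant, $\phi_0(n_x a n^- m a_y) = \phi_0(n_x a n^- a_y)$. The commutation $n^-_\tau a_y = a_y n^-_{y\tau}$ from Section~\ref{opposite} together with the commutativity of $A$ then rewrites this as $\phi_0(n_x a_y\cdot a\tilde n^-)$, where $a\in U_\epsilon\cap A$ lies at distance $O(\epsilon)$ from the identity, while $\tilde n^-\in N^-$ has its parameter rescaled by the factor $y$ and hence lies at distance $O(y\epsilon_0)$ from the identity. Using smoothness of the eigenfunction $\phi_0$, which satisfies $-\Delta\phi_0 = \delta(D-\delta)\phi_0$, I would bound
\begin{equation*}
\phi_0(n_x a_y\cdot a\tilde n^-) - \phi_0(n_x a_y) = O\bigl(\epsilon\,y^{D-\delta}\bigr) + O\bigl(y\cdot y^{D-\delta}\bigr),
\end{equation*}
uniformly for $n_x\in\mathrm{supp}(\eta)$, the secondary $N^-$-error being absorbed into the $A$-error since $y<1$.

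Substituting this back and using $\int_{W_\epsilon}r_\epsilon\,d\nu = 1$ would yield $\langle a_y\phi_0,\rho_{\eta,\epsilon}\rangle = \int\eta(n_x)\phi_0(n_x a_y)\,dn_x + O(\epsilon\,y^{D-\delta})$, and Proposition~\ref{bound} identifies the main term with $\phi_0^N(y) + O(y^\delta)$ because $\eta = 1$ on $B$ and the $B^c$-contribution is $O(y^\delta)$ by the same argument as in the proof of that proposition. The principal technical obstacle is the pointwise derivative bound $|\phi_0(n_x a_y a) - \phi_0(n_x a_y)| = O(\epsilon\,y^{D-\delta})$ uniformly on $\mathrm{supp}(\eta)$: this is what matches the mollification error to the natural scale $y^{D-\delta}$ of the main term, and it requires extracting pointwise (not merely averaged) control on $\phi_0$ and $\partial_A\phi_0$ near the horosphere at height $y$. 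I expect to obtain this by differentiating the integral formula $\phi_0(g) = \int_{\Lambda_\Gamma}e^{-\delta B_\theta(g\cdot o)}\,d\theta$ in the $A$-direction, computing $\partial_A B_\theta$ explicitly in the horospherical coordinates of Section~\ref{horo}, and bounding the resulting integrand in essentially the same way as in the proof of Proposition~\ref{bound}.
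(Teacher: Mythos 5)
The decisive step in your proposal is the uniform pointwise bound
$$\phi_0(n_x a_y\cdot a\tilde n^-) - \phi_0(n_x a_y) = O(\epsilon\,y^{D-\delta}),\qquad n_x\in\mathrm{supp}(\eta),$$
and this bound is false, so the argument as written cannot be completed. Near a radial limit point $n_x\in\Lambda_\Gamma$, the explicit formula of Section \ref{bottom} together with Sullivan's shadow lemma gives $\phi_0(n_x a_y)\asymp 1$ (the integrand is of size $y^{-\delta}$ on a Carnot ball of radius $\sqrt{y}$, whose Patterson--Sullivan measure is $\asymp y^{\delta}$), and correspondingly $\partial_A\phi_0(n_x a_y)\asymp 1$. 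An $A$-shift of size $\epsilon$ therefore changes $\phi_0$ by $O(\epsilon)$ at such points, which is much larger than $\epsilon\,y^{D-\delta}$ since $D-\delta>0$. The decay rate $y^{D-\delta}$ in Proposition \ref{bound} is an averaged (integrated over the horosphere) statement, not a pointwise one, and $\mathrm{supp}(\eta)$ necessarily contains the limit points because $F_\Lambda\subset B\subset\mathrm{supp}(\eta)$. Differentiating the Patterson--Sullivan integral, as you suggest, reproduces the same non-uniform pointwise behavior; it does not rescue the uniform estimate.

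The paper's proof avoids any pointwise estimate on $\phi_0$. After decomposing $n^-_{yx}=a_{y_1}n_{x_1}k_1$ (so that the $N^-$-perturbation becomes $A\cdot N\cdot K$), the $K$-part is killed by $K$-invariance of $\phi_0$, and the $N$-part is absorbed by a right translation of the $N$-integration variable using $N$-invariance of $dn$. The $\phi_0$ factor is then evaluated exactly (at the shifted height $yy_0y_1$), and the entire mollification error is transferred to the fixed, smooth, compactly supported cut-off $\eta$: $|\eta(n')-\eta(n'(n_{x_1yy_0y_1})^{-1})|=O_\eta(\epsilon)$, so the error term is $O_\eta(\epsilon)\int_{F_N}\phi_0(n'a_{yy_0y_1})\,dn'=O_\eta(\epsilon\,\phi_0^N(yy_0y_1))=O(\epsilon\,y^{D-\delta})$ by Proposition \ref{bound}. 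In other words, the correct error is $\epsilon$ times the \emph{integral} of $\phi_0$ over the horosphere, not $\epsilon$ times a uniform pointwise derivative bound. To repair your argument you should adopt this change-of-variable step and move the error onto $\eta$ rather than onto $\phi_0$.
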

\begin{proof}As usual we give a proof only in complex hyperbolic case.
Since $$\langle a_y\phi_0,\rho_{\eta,\epsilon}
\rangle_{L^2}=\int_{W_\epsilon}
r_\epsilon(h)\int_{(NM\cap\Gamma)\backslash NM/M}
\phi_0(nha_y)\eta(n) dn d\nu(h)$$$$=\int_{(NM\cap\Gamma)\backslash
NM/M} \phi_0(nha_y)\eta(n) dn $$ we need to estimate $\phi_0(nha_y)$
for $n\in F_N$ and $h\in W_\epsilon$.

For $h=a_{y_0}n_x^-m\in W_\epsilon$ so that $|y_0-1|=O(\epsilon)$,
$$nha_y=na_{yy_0}n^-_{yx}m.$$
Since $n^-_{yx}=a_{y_1}n_{x_1}k_1\in A_{ly\epsilon}N_{ly\epsilon}K$
so that $|y_1-1|=O(y\epsilon)$,
$$nha_y=nn_{x_1yy_0y_1}a_{yy_0y_1}k_1m.$$
Since $\phi_0$ is $K$-invariant and $dn$ is $N$-invariant,
$$\int_{F_N} \phi_0(nha_y) \eta(n)dn=\int_{F_N}
\phi_0(nn_{x_1yy_0y_1}a_{yy_0y_1})\eta(n)dn$$
$$=\int_{F_N} \phi_0(na_{yy_0y_1})(\eta(n)+O_\eta(\epsilon))dn$$ by letting
$n'=nn_{x_1yy_0y_1}$ and so $\eta(n)=\eta(n'(n_{x_1yy_0y_1})^{-1})$
and as $|\eta(n')-\eta(n'(n_{x_1yy_0y_1})^{-1})|=O_\eta(\epsilon)$.
By Proposition \ref{bound},
$$\int_{F_N} \phi_0(nha_y) \eta(n)dn=\int_{F_N}
\phi_0(na_{yy_0y_1})\eta(n)dn+O_\eta(\epsilon
\phi_0^N(a_{yy_0y_1}))$$
$$=\phi_0^N(yy_0y_1)+O((yy_0y_1)^\delta)+O_\eta(\epsilon \phi_0^N(a_{yy_0y_1})).$$

Since by Proposition \ref{bound},
$$\lim_{y\ra 0}\frac{\phi_0^N(yy_0y_1)}{\phi_0^N(y)}=\lim_{y\ra
0}\frac{\phi_0^N(yy_0y_1)}{(yy_0y_1)^{D-\delta}}\frac{(yy_0y_1)^{D-\delta}}{y^{D-\delta}}
\frac{y^{D-\delta}}{\phi_0^N(y)}$$$$=\lim_{y\ra
0}(y_0y_1)^{D-\delta}=(1+O(\epsilon)),$$ and since
$\phi_0^N(a_{yy_0y_1})=O((yy_0y_1)^{D-\delta})$ we get
$$\int_{F_N} \phi_0(nha_y) \eta(n)dn=\phi_0^N(y)+O(y^\delta)+O_\eta(\epsilon y^{D-\delta}).$$
\end{proof}

Let $\{Z_1,\cdots,Z_k\}$ denote an orthonormal basis of the Lie
algebra of $G$ and $\Gamma\subset G$ a discrete subgroup. For $f\in
C^\infty(\Gamma\backslash G)^K\cap L^2(\Gamma\backslash G)$, one
considers the Sobolev norm $S_m(f)$:
$$S_m(f)=\max\{||Z_{i_1}\cdots Z_{i_m}(f)||: 1\leq i_j \leq k\}.$$
The following is standard. For $\phi\in C_c^\infty(\Gamma\backslash
G)^K$, there exists $\phi'\in C_c^\infty(\Gamma\backslash G)^K$ so
that
\begin{enumerate}
\item for small $\epsilon>0$ and $h\in U_\epsilon$,
$$|\phi(g)-\phi(gh)|\leq \epsilon \phi'(g)$$ for any $g\in \Gamma\backslash G$.
\item by Sobolev embedding theorem, there exists $q$ so that $S_m(\phi')\ll S_q(\phi)$ for each $m$, where the implied
constant depends only on ${\text{supp}(\phi)}$.
\end{enumerate}
\begin{proposition}\label{another}
Let $\phi\in C^\infty(\Gamma\backslash G)^K$. Then for any $0<y<1$
and any small $\epsilon>0$,
$$|I_\eta(\phi)(a_y)-\langle a_y\phi,
\rho_{\eta,\epsilon}\rangle|\ll(\epsilon+y) I_\eta(\phi')(a_y)$$
where $I_\eta(\phi)(a_y)=\int \phi(n a_y)\eta(n) dn$ and $\eta\in
C_c((NM\cap\Gamma)\backslash NM/M)$.
\end{proposition}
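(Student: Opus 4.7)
The plan is to adapt the proof of Proposition \ref{basic} to an arbitrary $\phi\in C_c^\infty(\Gamma\backslash G)^K$, with the role of Proposition \ref{bound} (which gave the explicit growth rate of $\phi_0^N$) replaced by the Sobolev approximation $|\phi(g)-\phi(gh)|\leq \epsilon\phi'(g)$ for $h\in U_\epsilon$. Unfolding the definitions and using that $\mathrm{supp}(\eta)\times(U_{\epsilon_0}\cap AN^-M)\to\Gamma\backslash G$ is injective, I would first write
$$\langle a_y\phi,\rho_{\eta,\epsilon}\rangle-I_\eta(\phi)(a_y)=\int_{W_\epsilon}r_\epsilon(h)\int_{F_N}\bigl[\phi(nha_y)-\phi(na_y)\bigr]\eta(n)\,dn\,d\nu(h),$$
using $\int_{W_\epsilon}r_\epsilon\,d\nu=1$. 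The whole task reduces to estimating the inner difference $\phi(nha_y)-\phi(na_y)$ uniformly in $h\in W_\epsilon$.

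For this I would conjugate $h$ past $a_y$: writing $h=a_{y_0}n^-_{x_0}m$ with $|y_0-1|=O(\epsilon)$, $|x_0|=O(\epsilon_0)$, $m\in U_{\epsilon_0}\cap M$, the commutation rules of section \ref{opposite}, combined with the fact that $A$ is abelian and $M$ centralizes $A$, give
$$a_y^{-1}ha_y=a_{y_0}\,n^-_{yx_0}\,m.$$
Thus $nha_y=na_y\cdot(a_{y_0}n^-_{yx_0}m)$, and the $K$-invariance of $\phi$ together with $M\subset K$ allows the $m$-factor to be discarded, reducing the comparison to $\phi(na_y\cdot a_{y_0}n^-_{yx_0})$ versus $\phi(na_y)$. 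The key observation is that $a_y$ contracts $N^-$: since $|y_0-1|=O(\epsilon)$ and $|yx_0|\leq y\epsilon_0=O(y)$ for $y<1$, the element $a_{y_0}n^-_{yx_0}$ lies in $U_{O(\epsilon+y)}$ in the fixed left-invariant metric. The Sobolev approximation then yields
$$|\phi(na_y\cdot a_{y_0}n^-_{yx_0})-\phi(na_y)|\ll(\epsilon+y)\,\phi'(na_y)$$
uniformly in $h\in W_\epsilon$.

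Integrating this estimate against $\eta(n)\,dn$ gives the uniform bound $O((\epsilon+y)I_\eta(\phi')(a_y))$ for the inner integral, and then the $d\nu$-integration against $r_\epsilon$ (of total mass one) preserves the bound, giving the proposition. The only substantive point, and hence the main obstacle, is to correctly identify the source of the $(\epsilon+y)$ factor: the $\epsilon$ piece is clean from the $A$-component of $h$, but the $y$ piece arises solely from the fact that conjugation by $a_y^{-1}$ contracts $N^-$ by a factor of $y$, so that although the $N^-$-component of $h$ is only of size $\epsilon_0$ (and thus not small in $\epsilon$), after conjugation the effective shift drops to $y\epsilon_0=O(y)$. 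Keeping track of this $A$-action on $N^-$ is what separates the two error pieces; everything else is bookkeeping with commutation rules and the local approximation inequality.
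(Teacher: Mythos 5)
Your proposal is correct and follows essentially the same argument as the paper: unfold $\langle a_y\phi,\rho_{\eta,\epsilon}\rangle$ using $\int_{W_\epsilon}r_\epsilon\,d\nu=1$, conjugate $h$ past $a_y$ via the relation $n^-_x a_y = a_y n^-_{yx}$ (so that for $y<1$ the $N^-$-component is contracted to size $y\epsilon_0$), drop the $M$-factor by $K$-invariance, and apply the Sobolev approximation $|\phi(g)-\phi(gh')|\ll(\epsilon+y)\phi'(g)$ before integrating. Your identification of the $A$-contraction of $N^-$ as the source of the $y$ piece, and the $A$-component of $h$ as the source of the $\epsilon$ piece, is precisely the mechanism used in the paper's proof.
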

\begin{proof}For $h=an^-m\in W_\epsilon$,
$an^-ma_y=a_ya(a_{y^{-1}}n^-a_y)m$. Hence
$nan^-ma_y=na_y(aa_{y^{-1}}n^-a_ym)$ where $a_{y^{-1}}n^-a_y\in
U_{y\epsilon_0}\cap N^-$. Then as $\phi$ is $K$-invariant,
$$|\phi(na_y)-\phi(nha_y)|=|\phi(na_y)-\phi(na_yh')|\ll \phi'(na_y)(\epsilon+y\epsilon_0)$$
where $h'=aa_{y^{-1}}n^-a_y\in (U_\epsilon\cap
A)(U_{y\epsilon_0}\cap N^-)$. Hence
$$|\phi(na_y)-\int_{h\in W_\epsilon}\phi(nha_y)r_\epsilon(h)d\nu(h)|\ll \phi'(na_y)(\epsilon+y\epsilon_0).$$
By integrating on $(NM\cap\Gamma)\backslash NM/M$ we obtain
$$|I_\eta(\phi)(a_y)-\langle a_y\phi,
\rho_{\eta,\epsilon}\rangle_{L^2(\Gamma\backslash G)}|\ll
(\epsilon+y\epsilon_0) I_\eta(\phi')(a_y).$$ Since $\epsilon_0$ is
fixed, we get the desired result.
\end{proof}

\section{Unitary representations of rank one semisimple group}\label{unitary}
From this section, we normalize the metric so that its sectional
curvature is between $-4$ and $-1$. Equivalently we have to multiply
the metric tensors $ds^2$ by $1/4$. Then the volume form for complex
hyperbolic $n$-space is multiplied by $2^{-2n}$. Then the distance
from $(0,0,1)$ to $(0,0,y)$ will become $\log \sqrt y$. So by
changing the variable $\sqrt y=w$ and abusing the notation by
putting $w$ back to $y$, the volume form for complex hyperbolic
$n$-space is
\begin{eqnarray}\label{volume2}
\frac{1}{2y^{2n+1}} dVol_zdt dy \end{eqnarray} For real hyperbolic
$n$-space,
\begin{eqnarray}
\frac{1}{y^{n}} dVol_z dy \end{eqnarray} For quaternionic hyperbolic
$n$-space
\begin{eqnarray}
\frac{1}{8y^{4n+3}} dVol_zdt dy \end{eqnarray}
 where $dVol_z$ is a
volume form on $\F^{n-1}$. Also in all the formulas in section
\ref{bottom}, $n$ should be read as $2n$ under this normalization.

  Let
$\eta=\left[\begin{matrix}
   0 & 0 & 0 \\
   0 & 0 & 1 \\
   0 & 1 & 0\end{matrix}\right]$ be in $\mathfrak{so}(n,1),\mathfrak{su}(n,1),\mathfrak{sp}(n,1)$ so that
$e^{t\eta}$ is the 1-parameter subgroup constituting $A$ in $KAN$. A
direct calculation shows that
$$\mathfrak{g}_k=Ker(ad\eta-kId),\ k=0,\pm 1,\pm 2$$ are only root
spaces and there are only two positive roots $\beta,2\beta$ (in real
hyperbolic case, $2\beta$ is not a root) so that $\beta(\eta)=1$.
Note that real dimension of $\mathfrak{g}_1$ is $n-1,2n-2, 4n-4 $
resp. and that of $\mathfrak{g}_2=0,1, 3$ resp. Then the half sum
$\rho$ of the positive roots is
$$\rho=\frac{D}{2}\beta.$$
Since we will work with  horospheres based at $\infty$ and expanding
ones as $y\ra 0$, we will define a positive Weyl chamber by
$$A^+=\{a_y|0<y<1\}$$ as a multiplicative group. So $-\beta,-2\beta$ will be positive roots in this
paper, which will change the plus sign to the minus in all formulas
in the literature.

 In this section, we
prove the following. The kind of estimate we look for was first
established by Cowling, Haagerup and Howe for tempered
representations (i.e. unitary representations weakly contained in
the regular representation). A unitary representation
$(V,\pi=\int_{x\in\hat G}\pi_x d\mu(x))$ of $G$ is called tempered
if one of the following is true, see for example \cite{c,oh}.
\begin{enumerate}
\item For any $K$-finite unit vectors $v$ and $w$(i.e., the dimension of the subspaces spanned by $Kv$ and $Kw$ is finite), $$|\langle
\pi(g)v,w\rangle|\leq (\text{dim}\langle Kv\rangle \text{dim}\langle
Kw\rangle )^{1/2}\Xi_G(g)$$ for any $g\in G$ where $\Xi_G$ is the
Harish-Chandra function on $G$.
\item For almost all $x\in\hat G$, the irreducible representation
$\pi_x$ is strongly $L^{2+\epsilon}(G/Z(G))$, i.e., for any $p>2$,
there exists a dense subset $W\subset V$ such that for any $v,w\in
W$, the matrix coefficient $g\ra \langle \pi_x(g)v, w\rangle$ lies
in $L^p(G/Z(G))$.
\item For almost all $x\in \hat G$, $\pi_x$ is tempered in the sense
of (1).
\item $\pi$ is weakly contained in the regular representation
$L^2(G)$, i.e., any diagonal matrix coefficients of $\pi$ can be
approximated, uniformly on compact sets, by convex combinations of
diagonal matrix coefficients of the regular representation $L^2(G)$.
\end{enumerate}

 The following theorem is due to Y.
Shalom, \cite{Sha}, Theorem 2.1 p. 125 in case $G=SO(n,1)$ or
$SU(n,1)$. We generalize it to other rank one groups. For the
notational simplicity, we fix the notations first.

Let $G$ be a $\bR$-rank one simple Lie group. Pick an Iwasawa
decomposition $G=KAN$. Let $\lambda\in\mathfrak{a}'_{\bc}$ be a
complex linear form on the Lie algebra of $A$. This gives rise to
the character
$$
a\mapsto a^{\lambda}=e^{\lambda(\log(a))}
$$
on $A$. Let $M$ be the centralizer of $A$ in $K$. Let $Z^{\lambda}$
denote the space of $K$-finite complex valued functions on $G$ such
that $f(gman)=a^{-\lambda}f(g)$ for all $a\in A$, $m\in M$ and $n\in
N$ (notation taken from \cite{Ko}). $G$ acts on $Z^{\lambda}$ by
$(\pi_{\lambda}(g)u)(h)=u(g^{-1}h)$, for $g$, $h\in G$.

\begin{nott}
\label{s} Let $\rho$ denote the half-sum of positive roots. We write
$\lambda=(1+s)\rho$ with $s\in\bc$, $Z_{s}=Z^{\lambda}$, $\pi_s
=\pi_{\lambda}$.
\end{nott}

A description of the set of $s\in \C$ such that $Z_s$ is irreducible
and admits a $G$-invariant inner product can be found in \cite{Ko},
Theorem 10 page 641. It is the union of the line $\{\R(s)=0\}$, and
of the closed {\em critical interval} $\overline{CI}=[-s_1 (G),s_1
(G)]$, a symmetric interval on the real line. It is shown in
\cite{Ko}, that for $0\leq \lambda \leq \rho$, $\pi_\lambda$ and
$\pi_{2\rho-\lambda}$ are equivalent.
   The representations obtained for $\{\R(s)=0\}$ are called {\em
spherical principal series} representations of $G$. The
representations obtained when $s\in \overline{CI}$ are called {\em
spherical complementary series} representations of $G$.

The word {\em spherical} refers to the fact that these
representations contain a unique $K$-invariant line, generated by
the function $v_{\lambda}$ which equals $1$ on $K$.

According to Harish-Chandra, \cite{H}, every irreducible unitary
representation of $G$ which contains a nonzero $K$-invariant vector
is isomorphic to a representation from the spherical principal or
complementary series (except possibly for the trivial
representation).

\label{decay} Let $G$ be a $\bR$-rank one simple Lie group. Let
$(\pi_{s},Z_{s})$, $s\in[0,s_1 (G)]$, be a representation belonging
to the spherical complementary series of $G$. Pick a unit
$K$-invariant vector $v_{s}$ in $Z_{s}$. Define the function
\begin{eqnarray*}
 \Xi_{s} (g)=\langle\pi_{s} (g)v_{s},v_{s}\rangle_{s} .
\end{eqnarray*}
When $s=1$, $\lambda=2\rho$, hence $\pi_{2\rho}$ is equivalent to
$\pi_{0}$. So $\Xi_1=\Xi_G$ Harish-Chandra function on $G$.

 If $KA^+K$ is a polar decomposition of
$SU(n,1)$, then the Haar measure on it is (\cite{Sha})
   $$(e^{-2\beta(\log a)}-e^{2\beta(\log a)})(e^{-\beta(\log a)}-e^{\beta(\log a)})^{2n-2} dk da dk$$
where $\log:G\ra \mathfrak{g}$ is the inverse map of the exponential
map and $a(g)$ is the component of $A^+$ in the polar decomposition
of $g$. Note that in this formula, $A^+$ is regarded as an additive
group.
   Then for $y<1$, the measure is
comparable to $e^{-2n\beta(\log a)}dkdadk$,   that if we use $\log
y=a\in A^+$ to make $A^+$ a multiplicative group so that
$da=\frac{dy}{y}$ it is
\begin{eqnarray}\label{Haar}
   \frac{1}{y^{2n+1}}dkdydk.
   \end{eqnarray}  We hope that this switch between multiplicative
     and additive group does not cause any confusion to the reader.
This is consistent with the volume form (\ref{volume2}) on $H^n_\C$.
For real hyperbolic space, Haar measure is
$$(e^{-\beta(\log a)}-e^{\beta(\log a)})^{n-1}dkdadk$$ and comparable to
$$ e^{-(n-1)\beta(\log a)}dkdada  $$ for $y<1$, hence after put $\log
y=a$, the Haar measure on $KA^+K$ is
$$\frac{1}{y^n}dkdydk,   $$ which is comparable. In any case one can
write the Haar measure on $KA^+K$ as
\begin{eqnarray}\label{Haarmeasure}
\frac{1}{y^{D+1}}dkdydk
\end{eqnarray} for $y<1$.
For quaternionic hyperbolic case is the same.

 For $\lambda\in
i\mathfrak{a}^*$, and for $\log a(g)\leq 0$, the principal series
$\pi_\lambda$ has matrix coefficient decaying rates for
$K$-invariant unit vector $v_\lambda$,
$$|\langle \pi_\lambda(g) v_\lambda,v_\lambda\rangle|\leq\Xi
(g)\leq C|1-\beta(\log a(g)|e^{\frac{D}{2}\beta(\log a(g))}.$$ See
\cite{GV} section 4.6.4. For $0\leq \lambda\leq \frac{D}{2}\beta\in
\mathfrak{a}^*$, and for $\log a(g)\leq 0$,
 the complementary series representation
has matrix coefficient decaying rates
\begin{eqnarray}\label{comp}
\leq C|1-\beta(\log a(g)|e^{(-\lambda+\frac{D}{2}\beta)(\log a(g))}.
\end{eqnarray}
See \cite{Sha} (equation (10) page 132) also.

\begin{proposition}\label{est}For
all $g\in G$, there exist $c(G),C(G)$ such that
\begin{eqnarray}
c(G)\Xi_{1}(g)^{s}\leq \Xi_{s} (g)\leq C(G)(1-\log \Xi_{1}(g))
\Xi_{1}(g)^{s}. \label{gv}
\end{eqnarray}
\end{proposition}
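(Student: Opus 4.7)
The proposition extends Shalom's Theorem 2.1 in \cite{Sha} from $G = SO(n,1), SU(n,1)$ to the remaining rank one simple Lie groups $Sp(n,1)$ and $F_4^{-20}$. My plan is to follow Shalom's argument step by step, noting that the only structural inputs are the rank one root system $\{\pm\beta, \pm 2\beta\}$ with multiplicities $\dim \mathfrak g_\beta$ and $\dim \mathfrak g_{2\beta}$, and the polar-density formula (\ref{Haarmeasure}) for the Haar measure on $KA^+K$; both are available uniformly across all four rank one families.

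Concretely, the argument proceeds in three steps. First, express the spherical function $\Xi_s$ via the standard Harish-Chandra integral representation as $\Xi_s(a) = \int_K F_a(k)^{s'}\,dk$ for a positive kernel $F_a(k) = e^{-\rho(H(a^{-1}k))}$ and an affine function $s'$ of $s$, normalized so that $\Xi_0 \equiv 1$ and $\Xi_1$ equals the Harish-Chandra function $\Xi_G$. Second, for the upper bound, log-convexity of $s \mapsto \log \Xi_s(g)$---a direct consequence of H\"older's inequality applied to this integral---combined with the endpoint values $\Xi_0 \equiv 1$ and $\Xi_1 = \Xi_G$ and with the Harish-Chandra asymptotic $\Xi_1(a_y) \asymp (1+|\log y|)\,y^{D/2}$, yields the interpolation $\Xi_s \leq \Xi_1^s$; since $1-\log\Xi_1 \geq 1$, this already implies the claimed upper bound. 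Third, for the lower bound, decompose the $K$-integral into a concentration region around the Weyl element where $F_a$ is maximal and its complement, use (\ref{Haarmeasure}) to control the measure of the complement, and use (\ref{comp}) to bound the integrand pointwise; the concentration region alone contributes a positive multiple of $\Xi_1(a)^s$, giving $\Xi_s \geq c(G)\,\Xi_1^s$.

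The main obstacle is carrying over Shalom's explicit computations from $SU(n,1)$ to $Sp(n,1)$ and $F_4^{-20}$: in the quaternionic case the Heisenberg center $\im \bh$ has dimension three rather than one, contributing new Jacobian factors in the polar decomposition; in the octonionic case one must replace $U(n-1)\times U(1)$ by $\mathrm{Spin}(7)$ in the Langlands decomposition of $MA$. However, since the argument only tracks polynomial powers of $y$ and logarithmic corrections---both encoded in the single parameter $D$ together with the root multiplicities---this amounts to bookkeeping of Jacobians rather than essentially new analysis, and the constants $c(G)$ and $C(G)$ depend only on $D$, $\dim \mathfrak g_\beta$, and $\dim \mathfrak g_{2\beta}$.
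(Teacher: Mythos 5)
Your upper bound argument is a genuinely different, and in fact cleaner, route than the paper's. Writing $\Xi_s(a)=\int_K F_a(k)^{s}\,dk$ with $F_a(k)=e^{-\rho(H(a^{-1}k))}$ and applying H\"older gives log-convexity of $s\mapsto\log\Xi_s(a)$, and since $\Xi_0\equiv 1$ this yields $\Xi_s\leq \Xi_1^s$, which is even sharper than what is claimed (the $(1-\log\Xi_1)$ factor is slack). The paper instead cites the complementary-series bound (\ref{comp}) together with the elementary lower bound $\Xi_1(a)\geq e^{\rho(\log a)}$ (GV, Thm.\ 4.6.5) and the equivalence $\pi_\lambda\cong\pi_{2\rho-\lambda}$; your version is self-contained and avoids any case-by-case discussion of root multiplicities for $Sp(n,1)$ or $F_4^{-20}$, which is exactly the point. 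The bookkeeping you worry about (the three-dimensional center, $\mathrm{Spin}(7)$, etc.) never actually enters either argument.

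The lower bound sketch, however, does not work. The ``concentration region around the Weyl element,'' i.e.\ the set $E\subset K$ on which $F_a$ is near its maximum $e^{-\rho(\log a)}$, has $K$-measure of order $e^{2\rho(\log a)}$; its contribution to $\Xi_s(a)=\int_K F_a^s\,dk$ is therefore of order $e^{(2-s)\rho(\log a)}$, which for $s<1$ is \emph{exponentially} smaller than $\Xi_s(a)\asymp e^{s\rho(\log a)}$, let alone than $\Xi_1(a)^s$. In fact the bulk of $\Xi_s$ comes from the complement of $E$, not from $E$ itself, so ``the concentration region alone contributes a positive multiple of $\Xi_1(a)^s$'' is false. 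Moreover, the two tools you cite are the wrong ones: equation (\ref{Haarmeasure}) is the density of the Haar measure of $G$ on $KA^+K$ and says nothing about measures of subsets of $K$ in the $K$-integral; and equation (\ref{comp}) is an \emph{upper} bound on spherical functions and cannot bound the integrand from below. The paper obtains the lower bound by invoking the explicit lower estimate $\Xi_\lambda(g)\geq C'e^{(\rho-\lambda)(\log a(g))}$ for $0\leq\lambda\leq\rho$ (GV 4.7.13, Shalom eq.\ (11)) together with $\pi_\lambda\cong\pi_{2\rho-\lambda}$. Some version of that $c$-function-type estimate, verified to hold uniformly across the rank one families, is what you need here; the concentration heuristic cannot replace it.
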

\begin{proof}
Let $\lambda=(1+s)\rho,\ 0\leq s\leq s_1(G)$. Then $\pi_\lambda$ is
equivalent to $\pi_{2\rho-\lambda}$ where
$2\rho-\lambda=\rho-s\rho$. In view of equation (\ref{comp}),
$$\Xi_s(g) \leq C|1-\beta(\log a(g)|e^{(-2\rho+\lambda+\rho)(\log
a(g))}=C|1-\beta(\log a(g)|e^{(s\rho)(\log a(g))}.$$ But it is known
\cite{GV} (Theorem 4.6.5) that
$$\Xi_1(a)=\Xi(a)\geq e^{\rho(\log a)},\ a\in A^+.$$
Hence we get
$$\Xi_s(g) \leq C(G)(1-\log \Xi_{1}(g))
\Xi_{1}(g)^{s}.$$

For the lower bound, in \cite{GV} (4.7 (4.7.13)), \cite{Sha}
(equation (11) page 132), it is shown that
$$C'e^{(\rho-\lambda)\log a(g)}\leq \Xi_\lambda(g),\ 0\leq \lambda\leq
\rho.$$ Hence again using the fact $\pi_\lambda$ is equivalent to
$\pi_{2\rho-\lambda}$, we get the desired lower bound.
\end{proof}

\begin{theorem}\label{tem}
There exists a constant $C(G)$ such that for all $K$-finite vectors
$u$, $v\in Z_s=Z^{\lambda}$,
\begin{eqnarray*}
\langle\pi_{s} (g)u,v\rangle_{s}  \leq C(G)\,
(\dim\Span(Ku)\dim\Span(Kv))^{1/2} |u|_{s}\,|v|_{s} \,\Xi_{s} (g).
\end{eqnarray*}
\end{theorem}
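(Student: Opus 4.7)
The plan is to reduce the bound for general $K$-finite vectors to the bi-$K$-invariant matrix coefficient, which equals $\Xi_s(g)$ by definition, paying the dimension factors as the cost of dropping $K$-finiteness. This follows the Cowling-Haagerup-Howe template for tempered representations, adapted to complementary series using Proposition \ref{est} that has just been established.

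First I would work in the compact picture of $Z_s$, realizing it as the space of $K$-finite sections of the homogeneous line bundle over $K/M$ associated to the character $a^{-(1+s)\rho}$ of $MAN$. In this picture $K$ acts by left translation while $\pi_s(g)$ acts through the Iwasawa cocycle $\tilde u(k)\mapsto a(g^{-1}k)^{-(1+s)\rho}\tilde u(\kappa(g^{-1}k))$. The $K$-invariant unit vector $v_s$ is the constant function $1$, and by construction
\begin{equation*}
\langle \pi_s(g)v_s,v_s\rangle_s=\int_K a(g^{-1}k)^{-(1+s)\rho}\,dk=\Xi_s(g).
\end{equation*}

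Next, given $K$-finite vectors $u,v$, let $V_u=\Span(Ku)$ and $V_v=\Span(Kv)$ with dimensions $d_u,d_v$, and decompose into $K$-isotypic components $u=\sum_\tau u_\tau$, $v=\sum_\sigma v_\sigma$. By Schur orthogonality on $K$, for each fixed $g\in G$ the function $k\mapsto \pi_s(k)u$ is a $V_u$-valued function whose $L^2(K)$-norm equals $|u|_s$, and the same for $v$; the crucial point is that for $\tau$ an irreducible $K$-type of dimension $d_\tau$ appearing with multiplicity $m_\tau$ in $Z_s$ (controlled by Frobenius reciprocity and $\dim V_\tau^M$), the matrix coefficient $\langle\pi_s(g)u_\tau,v_\sigma\rangle_s$ can be expressed as an Eisenstein-type integral of the cocycle $a(g^{-1}k)^{-(1+s)\rho}$ against $K$-finite coefficients on $K/M$. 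A Cauchy-Schwarz estimate on this integral, controlling the $L^\infty$-norm of $K$-finite functions on finite-dimensional spaces by $\sqrt{d_\tau}$ times their $L^2$-norm, yields a bound of the form $\sqrt{d_\tau d_\sigma}\,|u_\tau|_s|v_\sigma|_s\,\Xi_s(g)$ on each block.

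Summing over $\tau,\sigma$ by Cauchy-Schwarz collapses the block bounds to $(d_u d_v)^{1/2}|u|_s|v|_s\Xi_s(g)$ up to a constant $C(G)$ depending only on the root system of $G$. At this stage Proposition \ref{est} enters to guarantee that the constant $C(G)$ can indeed be taken uniform in $s\in[0,s_1(G)]$: the upper bound $\Xi_s(g)\leq C(G)(1-\log\Xi_1(g))\Xi_1(g)^s$ together with the lower bound $\Xi_s\geq c(G)\Xi_1^s$ ensures that the $s$-dependent inner product and cocycle, which differ from the principal series ($s=0$) case by a real analytic deformation, preserve the bound up to a bounded factor.

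The main obstacle will be Step where one extracts the $(d_ud_v)^{1/2}$ factor cleanly for groups where $M$ is nonabelian (namely $Sp(n-1)\cdot Sp(1)$ for $Sp(n,1)$ and $Spin(7)$ for $F_4^{-20}$): the $K$-type decomposition of the induced representation $Z_s$ is controlled by $\dim V_\tau^M$ which is no longer at most $1$, so several isotypic blocks must be combined carefully. Shalom's original argument for $SO(n,1)$ and $SU(n,1)$ in \cite{Sha} exploits the abelian structure of $M$, and the extension here uses Proposition \ref{est} as the uniform substitute that makes the estimate insensitive to these $K$-type multiplicity complications.
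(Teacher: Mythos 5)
Your proposal attempts a direct Cauchy--Schwarz estimate on the Eisenstein integral in the compact picture of $Z_s$, block by block over $K$-isotypic components. This is genuinely different from what the paper does, and unfortunately the crucial step does not go through as written. For the spherical \emph{principal} series ($\Re(s)=0$) the inner product on $Z_s$ is the $L^2(K/M)$ inner product, and the Cauchy--Schwarz/Schur-orthogonality argument you sketch is essentially the original Cowling--Haagerup--Howe estimate, landing on $\Xi_1(g)$. But for the \emph{complementary} series ($0<s\leq s_1(G)$) the $G$-invariant inner product $\langle\cdot,\cdot\rangle_s$ is not the $L^2(K/M)$ pairing; it is built from a (Knapp--Stein) intertwining operator whose kernel is singular. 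Your bound ``controlling $L^\infty$ by $\sqrt{d_\tau}$ times $L^2$'' compares the wrong norms: $|u_\tau|_s$ is the $s$-norm, not the $L^2(K/M)$-norm, and the two differ on each $K$-type by an $s$-dependent scalar (the eigenvalue of the intertwining operator) which goes to $0$ or $\infty$ as $s\to 1$. Simultaneously, the target decay rate $\Xi_s(g)\sim\Xi_1(g)^s$ is \emph{slower} than the cocycle decay $\Xi_1(g)$ coming out of a naive Cauchy--Schwarz. Reconciling these two $s$-dependencies is exactly the hard part, and Proposition~\ref{est} alone does not do it; it only compares $\Xi_s$ with $\Xi_1^s$ after the matrix-coefficient bound has already been established.

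The paper avoids this entirely by Shalom's tensoring trick: since $s_1(G)>1/2$ for $Sp(n,1)$ (resp. $>1/3$ for $F_4^{-20}$), one tensors $\pi_s$ with two (resp. three) more complementary-series factors $\pi_t,\pi_{1-s-t}$ (resp. $\pi_t,\pi_t,\pi_{1-s-2t}$) chosen in the allowed range, so that the product of the $\Xi_{s_i}$'s is $O\bigl((1-\log\Xi_1)^4\Xi_1\bigr)$, hence $L^{2+\epsilon}$ after integrating against the Haar measure (\ref{Haarmeasure}). Then Proposition~\ref{chh} applies to the tensor product, giving the tempered bound with $\Xi_1$ there, and one specializes $U=u\otimes v_t\otimes v_{1-s-t}$, $V=v\otimes v_t\otimes v_{1-s-t}$ to divide out $\Xi_t\Xi_{1-s-t}$; the lower bound in Proposition~\ref{est} converts $\Xi_1/(\Xi_t\Xi_{1-s-t})$ into $\Xi_s/c(G)^3$. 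Note also that the number of extra tensor factors is dictated by how large $s_1(G)$ is for each group -- this, rather than any multiplicity complication in the $K$-type decomposition, is how the nonabelian-$M$ groups $Sp(n,1)$ and $F_4^{-20}$ are handled. Your obstacle paragraph identifies a real but secondary issue; the primary issue you need to resolve is how to get a $\Xi_s$ bound (rather than a $\Xi_1$ bound) out of a direct integral estimate against the $s$-deformed inner product, and the tensoring trick is precisely the device that sidesteps this.
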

\begin{proof}
The main ingredient in the proof is Cowling, Haagerup and Howe's
temperedness criterion \cite{c}. The following statement is a
combination of their Theorems 1 and 2.

\begin{proposition}
\label{chh} Let $\pi$ be a unitary representation of a semi-simple
algebraic group $G$. Then the following are equivalent.
\begin{enumerate}
  \item $\pi$ has a dense set of vectors whose coefficients  belong to $L^{2+\epsilon}(G)$ for all $\epsilon>0$.
  \item for all $K$-finite vectors $u$ and $v$ in $\pi$,
\begin{eqnarray}\hskip 1 cm \langle\pi(g)u,v\rangle  \leq C\,
(\dim\Span(Ku)\dim\Span(Kv))^{1/2} |u|\,|v| \,\Xi_{1}(g)
\label{inchh}
\end{eqnarray}where
$\Xi_1=\Xi$ is the Harish-Chandra function on $G$.
\end{enumerate}
\end{proposition}

Also, by Proposition \ref{est}, for all $g\in G$,
\begin{eqnarray}
c(G)\Xi_{1}(g)^{s}\leq \Xi_{s} (g)\leq C(G)(1-\log \Xi_{1}(g))
\Xi_{1}(g)^{s}. \label{gv}
\end{eqnarray}

Shalom's trick consists in tensoring representations until they
become almost square integrable. Let $s\in[0,s_1 (G)]$. For
$G=Sp(n,1)$, $n\geq 2$, $s_1 (G)=\frac{4n-2}{4n+2}>\frac{1}{2}$.
Thus there exists $t \in[0,s_1 (G)]$ such that $1-s-t \in[0,s_1
(G)]$. For $G=F_{4}^{-20}$, $s_1 (G)=\frac{10}{22}>\frac{1}{3}$.
Thus there exists $t \in[0,s_1 (G)]$ such that $1-s-2t \in[0,s_1
(G)]$. Consider the unitary representation
\begin{eqnarray*}
\pi=\pi_{s}\otimes\pi_{t}\otimes\pi_{1-s-t} \quad \textrm{(resp.
}\pi=\pi_{s}\otimes\pi_{t}\otimes\pi_{t}\otimes\pi_{1-s-2t}\textrm{).}
\end{eqnarray*}
Since all $\pi_{s}$ are irreducible, the translates of $v_{s}$
generate a dense subspace of $Z_{s}$. Let $u_1$, $v_1$ belong to
this subspace. Then $\langle\pi_{s} (g)u_1 ,v_1
\rangle_{s}=O(\Xi_{s}(g))$. Similarly, pick $u_2$, $v_2$ (resp.
$u_3$, $v_3$, $u_4$, $v_4$) in the vector space generated by
translates of $v_t$ (resp. of $v_{1-s-t}$, $v_{1-s-2t}$). Let
$U=u_{1}\otimes u_{2}\otimes u_{3}$, $V=v_{1}\otimes v_{2}\otimes
v_{3}$ (resp. $u_{1}\otimes u_{2}\otimes u_{3}\otimes u_{4}$
etc...). Then
\begin{eqnarray*}
\langle\pi(g)U,V\rangle &=&O(\Xi_{s}(g)\Xi_{t}(g)\Xi_{1-s-t}(g)) \quad \textrm{(resp. } O(\Xi_{s}(g)\Xi_{t}(g)\Xi_{t}(g)\Xi_{1-s-2t}(g))\textrm{)}\\
&=&O((1-\log\Xi_{1}(g))^{4}\Xi_{1}(g)).
\end{eqnarray*}
Since $y^{\delta}\log y\ra 0$ as $y\ra 0$ for any $\delta>0,$ for
$a_y,y<1$ we have $(1-\log\Xi_{1}(g))^{4}\Xi_{1}(g) \leq C
y^{\rho-\delta}$ for any $\delta>0$. Since the Haar measure is given
by
$$\frac{1}{y^{D+1}}dkdydk,$$ for small $\delta$
$$\int_K \int_0^1 (y^{\rho-\delta})^{2+\epsilon}
\frac{1}{y^{D+1}}dkdydk <\infty,$$
 thus belongs to $L^{2+\epsilon}(G)$
for all $\epsilon>0$.

Since these products generate a dense subspace of the tensor
product, Proposition \ref{chh} applies, $\pi$ is tempered, and
inequality (\ref{inchh}) holds for all $K$-finite vectors in the
tensor product.  Let $u$, $v\in Z_s$ be $K$-finite vectors. Since
\begin{eqnarray*}
\langle\pi(g)u\otimes v_t \otimes v_{1-s-t},v\otimes v_t \otimes
v_{1-s-t}\rangle=\langle\pi_s(g)u,v\rangle_s
\Xi_{t}(g)\Xi_{1-s-t}(g)
\end{eqnarray*}
and
\begin{eqnarray*}
|u\otimes v_t \otimes v_{1-s-t}|=|u|_s , \quad |v\otimes v_t \otimes
v_{1-s-t}|=|v|_s ,
\end{eqnarray*}
\begin{eqnarray*}
\langle \pi_s(g)u,v\rangle_s  \leq C\,
(\dim\Span(Ku)\dim\Span(Kv))^{1/2} |u|_s \,|v|_s
\,\frac{\Xi_{1}(g)}{\Xi_{t}(g)\Xi_{1-s-t}(g)}.
\end{eqnarray*}
Thanks to the lower bound (\ref{gv}) on spherical functions,
\begin{eqnarray*}
\frac{\Xi_{1}(g)}{\Xi_{t}(g)\Xi_{1-s-t}(g)}\leq
\frac{\Xi_1(g)^s}{c(G)^{2}}\leq \frac{\Xi_s(g)}{c(G)^3},
\end{eqnarray*}
yielding the announced inequality.
\end{proof}

So for $a_y,y<1$, from equation (\ref{comp})
\begin{eqnarray}
|\langle a_y v_\lambda, v_\lambda \rangle |\leq C|1-\log
y|y^{\frac{D}{2}-\lambda}
\end{eqnarray}
We will change the parameter for $\lambda$  from $0\leq \lambda \leq
\rho$ to $\frac{D}{2}\beta=\rho \leq \lambda \leq D\beta=2\rho$ so
that the bound becomes
$$C|1-\log y|y^{D-\lambda}$$
and when $\lambda=D$ it represents a trivial representation and when
$\lambda=D/2$ represents a principal series. Since $y^{\epsilon}\log
y\ra 0$ as $y\ra 0$ for any $\epsilon>0,$ we will write
\begin{eqnarray}\label{matrixcoefficient}
|\langle a_y v_\lambda, v_\lambda \rangle |\leq C
y^{D-\lambda-\epsilon } \end{eqnarray} for $\rho\leq \lambda\leq
2\rho$.
\section{Bottom eigenspectrum of Laplace operator}\label{laplace}
Henceforth, we use the notation $(V_\lambda,\pi_\lambda)$ to denote
the spherical principal or complementary unitary representation of
rank one group $G$. Let $X$ be a rank one symmetric space and fix a
origin $(0,0,1)=o\in X$. Let $M=\Gamma\backslash X$ be
geometrically finite with a critical exponent $\delta$.  Let
$B_\theta$ be a Busemann function based at $\theta$ normalised  that
$B_\theta(o)=0$. The function defined as
$$\phi_0(x,t,y)=\int_{\Lambda_\Gamma} e^{-\delta B_\theta(x,t,y)} d\theta$$ where $d\theta$ is a fixed
Patterson-Sullivan measure associated to a fixed reference point
$o\in X$, descends to a positive $L^2$ function on $M$ whose
eigenvalue with respect to the Laplace operator is
$\delta(D-\delta)$ where $D$ is the Hausdorff dimension of $\partial
X$. For $H^n_\bR,H^n_\C, H^n_\H$, $D=n-1,2n,4n+2$ resp. under the
normalization of the sectional curvature between $-4$ and $-1$. Note
that $L^2(\Gamma\backslash G)^K$ is naturally isomorphic to
$L^2(\Gamma\backslash H^n_\F)$ by averaging over $K$-orbits, and
this isomorphism intertwines the action of $C$, the Casimir
operator, with that of $-\Delta$. For geometrically finite groups,
the relation between the bottom spectrum $\lambda_0$ and the
critical exponent $\delta$(=Hausdorff dimension of the limit set) is
$$\lambda_0=\delta(D-\delta).$$

In this paper, we will assume that $\delta>D/2$ and
$(V_\lambda,\pi_\lambda), D/2\leq \lambda \leq D$ to denote the
spherical principal or complementary unitary representation of rank
one group $G$. Fix $D/2<s_\Gamma <\delta$  so that there is no
eigenvalue of the Laplace operator between $s_\Gamma(D-s_\Gamma)$
and the bottom spectrum $\delta(D-\delta)$ in $L^2(\Gamma\backslash
G/K)$. Such $s_\Gamma$ exists since the spectrum is discrete for
small eigenvalues, see \cite{H} for general geometrically finite
manifolds with pinched negative curvature. Then
$$L^2(\Gamma\backslash G)^K=V_\delta \oplus V$$ where $V_\delta$ is a
complementary series corresponding to $\delta$ and $V$ does not
contain any complementary series $V_s$ for $s\geq s_\Gamma$.

If $V_s$ is an irreducible factor for $V$, we want to estimate the
matrix coefficient decaying rate.

For $X_i$, an orthonormal basis of the Lie algebra of $K$ with
respect to an $Ad$-invariant scalar product, let $\omega=1-\sum
X_i^2$. This is a differential operator  in the center of the
enveloping algebra of Lie(K) and acts as a scalar on each
$K$-isotypic component of $V_s$.
\begin{proposition}\label{tempered}Let $(V,\pi)$
be a unitary representation of $\ G=SO(n,1)$, $SU(n,1), Sp(n,1)$
which do not weakly contain any complementary series representation
$V_s$ for $s \geq s_0$.  Then for any $\epsilon>0$, there exists
$c_\epsilon$ such that for any smooth vectors $w_1,w_2\in V$, and
$y<1$,
$$|\langle a_y w_1,w_2\rangle|\leq c_\epsilon
y^{D-s_0-\epsilon}||\omega^m(w_1)||||\omega^m(w_2)||,$$ where $4m>
rank(K)+2\#\{\text{positive\ roots}\}.$
\end{proposition}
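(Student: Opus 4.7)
The plan is to reduce to the irreducible case via a direct integral decomposition, apply Theorem \ref{tem} and Proposition \ref{est} to each irreducible piece, and then handle non $K$-finite smooth vectors by a $K$-type decomposition controlled by the operator $\omega$.

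First, write $V=\int_{\hat{G}}^{\oplus}V_{x}\,d\mu(x)$ as a direct integral of irreducible unitary representations of $G$. By Harish-Chandra's classification, each $V_{x}$ is either the trivial representation, a spherical principal series, a spherical complementary series $V_{s(x)}$, or a non-spherical (tempered) representation; the hypothesis that $V$ weakly contains no $V_{s}$ with $s\geq s_{0}$ means $s(x)<s_{0}$ for $\mu$-a.e.\ $x$. The trivial factor cannot occur since $V$ would then weakly contain the trivial representation, equivalent to $V_{1}$ (and $s_0<1$). So on each irreducible factor the bound \eqref{matrixcoefficient} (upgraded by Theorem \ref{tem}) applies, and for $y<1$, $s<s_0$, absorbing the logarithmic factor $|1-\log y|$ into $y^{-\epsilon/2}$, we obtain the $K$-finite estimate
\[
|\langle \pi_{x}(a_{y})u,v\rangle|\;\le\; C\,(\dim\Span(Ku)\,\dim\Span(Kv))^{1/2}\,\|u\|\,\|v\|\,y^{D-s_{0}-\epsilon/2}
\]
uniformly in $x$.

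Next I decompose $w_{i}\in V$ into $K$-isotypic components $w_{i}=\sum_{\tau\in\widehat K}w_{i}^{\tau}$. By construction $\omega$ acts as a scalar $c_{\tau}\geq 1+\|\lambda_{\tau}\|^{2}$ on the $\tau$-isotypic subspace, where $\lambda_{\tau}$ is the highest weight of $\tau$; hence $\|\omega^{m}w_{i}\|^{2}=\sum_{\tau}c_{\tau}^{2m}\|w_{i}^{\tau}\|^{2}$. The span $\Span(K w_{i}^{\tau})$ lies in the $\tau$-isotypic component, so its dimension is at most $\dim V_\tau$ times the $K$-multiplicity. For each irreducible factor $\pi_x$, spherical Frobenius reciprocity bounds the multiplicity of $\tau$ in $\pi_{x}|_{K}$ by $\dim V_{\tau}^{M}\le\dim V_{\tau}$, so integrating over the direct integral and summing over $K$-types gives
\[
|\langle a_{y}w_{1},w_{2}\rangle|\;\le\; C\,y^{D-s_{0}-\epsilon/2}\sum_{\tau_{1},\tau_{2}}(\dim V_{\tau_{1}}\dim V_{\tau_{2}})^{1/2}\|w_{1}^{\tau_{1}}\|\,\|w_{2}^{\tau_{2}}\|.
\]

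Finally I apply Cauchy--Schwarz in each $\tau$-sum with weights $c_{\tau}^{-m}\cdot c_{\tau}^{m}$:
\[
\sum_{\tau}(\dim V_{\tau})^{1/2}\|w_{i}^{\tau}\|\;\le\;\bigl(\sum_{\tau}\dim V_{\tau}\,c_{\tau}^{-2m}\bigr)^{1/2}\|\omega^{m}w_{i}\|.
\]
By Weyl's dimension formula $\dim V_{\tau}\ll(1+\|\lambda_{\tau}\|)^{\#\{\text{positive roots}\}}$ and the lattice count gives $\#\{\tau:\|\lambda_{\tau}\|\le R\}\ll R^{\mathrm{rank}(K)}$, so the series $\sum_{\tau}\dim V_{\tau}\,c_{\tau}^{-2m}$ converges precisely under the hypothesis $4m>\mathrm{rank}(K)+2\#\{\text{positive roots}\}$. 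This yields the claimed bound after relabeling $\epsilon$.

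The main obstacle will be the careful bookkeeping in step two: the direct integral forces us to bound multiplicities of $K$-types uniformly in the irreducible factor, which is what spherical Frobenius reciprocity provides, and the exact value of $m$ is dictated by the Weyl dimension/lattice counting trade-off in the last step. The spectral-gap assumption is used only to force $s(x)<s_{0}$ on $\mu$-a.e.\ fibre, which is what allows pulling the decay $y^{D-s_{0}-\epsilon}$ outside the integral.
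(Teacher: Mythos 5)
Your proof follows the paper's argument essentially step for step: the same direct-integral decomposition over $\hat G$, the same reduction to the irreducible bound via Theorem \ref{tem} and Proposition \ref{est}, and the same passage from $K$-finite to smooth vectors via the $K$-isotypic/Sobolev argument. The only difference is that the paper delegates this last step to \cite{Mau} while you spell it out; your bookkeeping there slightly misstates the worst case (for $w^\tau$ in the $\tau$-isotypic component one only gets $\dim\Span(Kw^\tau)\le(\dim V_\tau)^2$, since both $\dim V_\tau$ and the multiplicity bound enter, not just $\dim V_\tau$), but that only shifts the exponent inside the Weyl/lattice count and leads exactly to the paper's condition $4m>\mathrm{rank}(K)+2\#\Sigma^+$.
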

\begin{proof}
$\pi$ decomposes as $\int_{\hat G} \oplus^{m_z}\rho_z d\nu$ where
$\hat G$ is the unitary dual of $G$, $m_z$ is the multiplicity of an
irreducible representation $\rho_z$, and $\nu$ is a spectral measure
on $\hat G$. Then $\rho_z$ is either tempered or isomorphic to a
complementary series $V_s$ for $D/2\leq s < s_0<n-1, D/2\leq s <
s_0< 2n, D/2\leq s < s_0< 4n$. Note that by Kostant, $s_0$ cannot
exceed $4n$ for quaternionic case, \cite{Ko}.

Then for $K$-finite unit vectors $w_1$ and $w_2$ of $\pi$, if we
write
$$w_i=\oplus v_i^\lambda,\ v_i^\lambda\in V_\lambda,$$ then by
Theorem \ref{tem}
$$\langle a_yv_1^\lambda, v_2^\lambda \rangle \leq
C(G)||v_1^\lambda||||v_2^\lambda||(\dim\Span(Kv_1^\lambda)\dim\Span(Kv_2^\lambda))^{1/2}
\Xi_{\lambda} (a_y).$$ By  equation (\ref{matrixcoefficient}), using
Cauchy-Schwarz inequality and $\dim\Span K w_i \geq
\dim\Span(Kv_i^\lambda)$, finally we get
$$ | \langle a_yw_1, w_2\rangle|\leq C(G)y^{D-s_0-\epsilon}\Pi \sqrt{\text{dim}\langle Kw_i\rangle}. $$

From $K$-finite vector to smooth vectors, it is standard, see
\cite{Mau}, that
$$ | \langle a_yw_1, w_2\rangle|\leq C(G)
y^{D-s_0-\epsilon}||\omega^m(w_1)||||\omega^m(w_2)||,$$ where
$4m>rank(K)+2\#\Sigma^+$ and $\Sigma^+$ is a set of positive roots
of $K$. In real hyperbolic $n$ space case ($K=SO(n)$),
$$m=0,1,2,4$$ for $n=2,3,4,6$ and
for $n\geq 5$ (odd), $m=(n-1)^2/4$ and for $n\geq 8$ (even),
$m=n^2/4$.
\end{proof}
From this we get
\begin{corollary}\label{product}
Let $\Gamma$ be a geometrically finite discrete subgroup of $G$ with
$\delta$ as in the standing assumption. Then for any $\phi_1\in
C^\infty(\Gamma\backslash G)^K\cap L^2(\Gamma\backslash G/M)$,
$\phi_2\in C_c^\infty(\Gamma\backslash G/M)$ and $0<y<1$,
$$\langle a_y\phi_1, \phi_2\rangle= \langle \phi_1,
\phi_0\rangle\langle a_y\phi_0, \phi_2\rangle+ O(y^{D-s_\Gamma}
S_{2m}(\phi_1)S_{2m}(\phi_2)).$$

\end{corollary}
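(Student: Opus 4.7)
The plan is to exploit the orthogonal $G$-invariant decomposition $L^2(\Gamma\backslash G) = \mathcal{V}_\delta \oplus \mathcal{V}$, where $\mathcal{V}_\delta$ is the $V_\delta$-isotypic component. Since $\phi_0$ is by assumption the unique unit $L^2$-eigenfunction of $-\Delta$ with eigenvalue $\delta(D-\delta)$, the $K$-fixed line in $\mathcal{V}_\delta$ equals $\mathbb{C}\phi_0$, and hence $\mathcal{V}_\delta$ is a single copy of the spherical complementary series $V_\delta$. The spectral gap hypothesis guarantees that $\mathcal{V}$ weakly contains no $V_s$ with $s \geq s_\Gamma$, so Proposition~\ref{tempered} applies on $\mathcal{V}$.

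Write $\phi_i = \phi_i^\delta + \phi_i^\perp$ along $\mathcal{V}_\delta \oplus \mathcal{V}$. Because $\phi_1$ is $K$-invariant, its $\mathcal{V}_\delta$-component must lie on the $K$-fixed line, forcing $\phi_1^\delta = \langle \phi_1, \phi_0\rangle\,\phi_0$. Both subspaces are $G$-stable, so $a_y$ preserves the decomposition and the cross terms $\langle a_y\phi_1^\delta, \phi_2^\perp\rangle$ and $\langle a_y\phi_1^\perp, \phi_2^\delta\rangle$ vanish; using also $\langle a_y\phi_0, \phi_2^\perp\rangle = 0$ to replace $\phi_2^\delta$ by $\phi_2$, one obtains
\[
\langle a_y \phi_1, \phi_2\rangle - \langle \phi_1, \phi_0\rangle \langle a_y \phi_0, \phi_2\rangle = \langle a_y \phi_1^\perp, \phi_2^\perp\rangle.
\]

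To bound the remainder, apply Proposition~\ref{tempered} to $(\mathcal{V}, \pi|_\mathcal{V})$ with $s_0 = s_\Gamma$: for any $\epsilon > 0$ and $m$ sufficiently large,
\[
|\langle a_y \phi_1^\perp, \phi_2^\perp\rangle| \leq c_\epsilon\, y^{D-s_\Gamma-\epsilon}\, \|\omega^m \phi_1^\perp\|\,\|\omega^m \phi_2^\perp\|.
\]
The orthogonal projection $\Pi_\mathcal{V}$ is $G$-equivariant, hence commutes with the differential operator $\omega = 1 - \sum X_i^2$, and is a contraction; therefore $\|\omega^m \phi_i^\perp\| = \|\Pi_\mathcal{V}(\omega^m\phi_i)\| \leq \|\omega^m\phi_i\| \lesssim S_{2m}(\phi_i)$.

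The $\epsilon$-loss in the exponent is absorbed by choosing $s_\Gamma$ strictly inside the no-spectrum interval, so that Proposition~\ref{tempered} can be applied with a parameter $s_0$ slightly larger than the true spectral edge and still at most $s_\Gamma$. The only non-routine conceptual point — and the one place where $K$-invariance of $\phi_1$ is essential — is identifying $\phi_1^\delta$ explicitly as $\langle \phi_1, \phi_0\rangle\phi_0$, which rests on the simplicity of the bottom eigenspace; once that is in hand, the rest is routine bookkeeping with the orthogonal decomposition and Sobolev estimates.
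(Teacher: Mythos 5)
Your proposal is correct and takes essentially the same approach as the paper: decompose $L^2(\Gamma\backslash G)^K = V_\delta \oplus V$ via the spectral gap, write $\phi_1 = \langle\phi_1,\phi_0\rangle\phi_0 + \phi_1^\perp$ using $K$-invariance of $\phi_1$ and simplicity of the bottom eigenspace, and bound $\langle a_y\phi_1^\perp,\phi_2\rangle$ via Proposition~\ref{tempered}. You are slightly more careful than the paper's terse proof on two small points that the paper leaves implicit: that $\omega$ commutes with the $G$-equivariant orthogonal projection (to get $S_{2m}(\phi_1^\perp)\ll S_{2m}(\phi_1)$), and that the $\epsilon$-loss in Proposition~\ref{tempered} must be absorbed by applying it with a parameter $s_0$ strictly below $s_\Gamma$ (possible because $s_\Gamma$ was chosen inside the spectral gap), rather than with $s_0=s_\Gamma$ outright.
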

\begin{proof}Note first that $$L^2(\Gamma\backslash G)^K=V_\delta \oplus V$$ where $V_\delta$ is a
complementary series corresponding to $\delta$ and $V$ does not
contain any complementary series $V_s$ for $s\geq s_\Gamma$. Put
$\phi_1=\langle \phi_1,\phi_0\rangle \phi_0+ \phi_1^\perp$. Since
$\phi_0$ is $K$-invariant, $\phi_1^\perp$ is also $K$-invariant.
Then
$$\langle a_y \phi_1,\phi_2 \rangle=\langle \phi_1,\phi_0\rangle
\langle a_y \phi_0, \phi_2\rangle+\langle a_y \phi_1^\perp, \phi_2
\rangle$$
$$=\langle \phi_1,\phi_0\rangle
\langle a_y \phi_0,
\phi_2\rangle+O(y^{D-s_\Gamma}S_{2m}(\phi_1)S_{2m}(\phi_2))$$ since
$S_{2m}(\phi_i^\perp)\ll S_{2m}(\phi_i)$ and by Proposition
\ref{tempered}.
\end{proof}


\section{Equidistribution}\label{equid}
Suppose the image of a horosphere is closed in $\Gamma\backslash
H^n_\F$ a geometrically finite manifold. Let $N(J)=\{[n]\in
(NM\cap\Gamma)\backslash NM:\Gamma\backslash \Gamma n A \cap J\neq
\emptyset\}$ where $J\subset \Gamma\backslash G$ a compact set,
i.e., the set of elements on $F_N$ whose orbit under the $A$-flow
intersects $J$. We use the notations of section \ref{pre} and
section \ref{bottom}.
\begin{lemma}$N(J)$ is bounded if $J$ is compact.
\end{lemma}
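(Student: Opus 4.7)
I would argue by contradiction. Suppose $N(J)$ is unbounded, producing $[n_i]\in F_N$ that diverges in $(NM\cap\Gamma)\backslash NM$ together with parameters $y_i>0$ such that $\Gamma n_i a_{y_i}\in J$. By compactness of $J$, after passing to a subsequence $\Gamma n_i a_{y_i}\to \Gamma g$ for some $g\in G$; choosing $\gamma_i\in\Gamma$ appropriately, the lifts $\gamma_i n_i a_{y_i}$ lie in a fixed compact subset $K_0\subset G$ with $\gamma_i n_i a_{y_i}\to g$. The analysis then splits according to the behaviour of $\{y_i\}$.

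\textbf{Easy case: $\{y_i\}$ is relatively compact in $(0,\infty)$.} Extract a further subsequence so that $a_{y_i}\to a_{y_0}$. Then $\Gamma n_i=(\Gamma n_i a_{y_i})\,a_{y_i}^{-1}\to \Gamma g\,a_{y_0}^{-1}$ in $\Gamma\backslash G$. The standing hypothesis that $(NM\cap\Gamma)\backslash NM/M\to\Gamma\backslash G/M$ has closed image makes this orbit map a proper embedding, so the convergence of $\Gamma n_i$ lifts to convergence of $[n_i]$ in $(NM\cap\Gamma)\backslash NM/M$, contradicting divergence in $F_N$.

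\textbf{Hard case: $y_i\to\infty$ or $y_i\to 0$.} I handle $y_i\to\infty$; the symmetric case is analogous. Since $NA$ fixes $\infty\in\partial X$, one has $\gamma_i n_i a_{y_i}\cdot\infty=\gamma_i\cdot\infty$, and rewriting $\gamma_i n_i=(\gamma_i n_i a_{y_i})\,a_{y_i}^{-1}$ with the first factor bounded and $a_{y_i}^{-1}$ fixing $\infty$, the boundary image $\gamma_i\cdot\infty$ stays in a compact region of $\partial X$ and accumulates at some $\zeta\in\partial X$. By discreteness of $\Gamma$, only finitely many elements of $\Gamma\cdot\infty$ can lie in any fixed compact neighborhood of $\zeta$ unless $\zeta\in\Lambda_\Gamma$. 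When $\infty\notin\Lambda_\Gamma$, we extract a subsequence with $\gamma_i\cdot\infty$ constant, so $\gamma_i\in\gamma\cdot\mathrm{Stab}_\Gamma(\infty)=\gamma\cdot(\Gamma\cap NM)$ for some fixed $\gamma$; absorbing $\gamma$ into $K_0$ and the $\Gamma\cap NM$-factor into the representative of $[n_i]$ reduces to the easy case. When $\infty$ is a bounded parabolic fixed point, the bounded parabolic condition furnishes a compact fundamental domain for $\Gamma\cap NM$ on $\Lambda_\Gamma\setminus\{\infty\}$, which again forces the $\gamma_i\cdot\infty$ to stabilise modulo $\Gamma\cap NM$ along a subsequence, returning us to the easy case.

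\textbf{Expected main obstacle.} The chief difficulty is the hard case $y_i\to 0,\infty$: boundedness of $\gamma_i n_i a_{y_i}$ in $G$ yields accumulation of $\gamma_i\cdot\infty$ in $\partial X$, but converting this into stabilisation of $\gamma_i$ modulo $\Gamma\cap NM$ requires the precise geometrically finite input, namely that closedness of the horosphere image is equivalent to $\infty$ being either in the domain of discontinuity or a bounded parabolic fixed point (cf.\ Section \ref{bottom}). Once this reduction is achieved the argument collapses cleanly onto the easy case via properness of the horosphere embedding.
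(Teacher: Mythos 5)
Your strategy diverges substantially from the paper's. The paper's proof has one global move: it constructs a fundamental domain $F$ of $\Gamma$ containing the ``far'' part of the horospherical slab, namely $\{(z,t,v): (z,t)\in F_N,\ |(z,t)|>L\ \text{or}\ v>L\}\subset F$ (which is where the geometrical finiteness and the bounded parabolic / domain-of-discontinuity dichotomy is used), and then shows directly from $n_j a_j=\gamma_j w_j$ with $n_j\to\infty$ that $\gamma_j$ is eventually a single element $\gamma_0$, forcing $n_j a_j$ bounded. That argument treats the $A$-parameter uniformly: it never needs to know whether $y_j$ stays bounded.

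Your ``easy case'' ($y_i$ relatively compact) is essentially sound, though you should cite, rather than assert as a standing hypothesis, the fact that a closed orbit $\Gamma\backslash\Gamma NM$ in $\Gamma\backslash G$ makes the map $(NM\cap\Gamma)\backslash NM\to\Gamma\backslash G$ a homeomorphism onto its image; that is a standard but nontrivial consequence of closedness for Lie group orbit maps, not an assumption.

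The ``hard case'' has a genuine gap. When $\infty\notin\Lambda_\Gamma$, you observe that $\gamma_i\cdot\infty$ accumulates at some $\zeta$ and then ``extract a subsequence with $\gamma_i\cdot\infty$ constant.'' That step is only justified if $\zeta\notin\Lambda_\Gamma$, since the orbit of a point in the domain of discontinuity is discrete in $\Omega_\Gamma$ but can (and generically does) accumulate on the limit set. Nothing in your setup rules out $\zeta\in\Lambda_\Gamma$: you only know $\gamma_i\cdot\infty=w_i\cdot\infty\to w\cdot\infty$ with $w\in G$ arbitrary, so $\zeta=w\cdot\infty$ can very well lie in $\Lambda_\Gamma$. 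You would instead need to use that $x_i:=n_i a_{y_i}\cdot o\to\infty$ in $\overline X$ while $\gamma_i x_i\to g\cdot o$ stays in the interior, and invoke the convergence-group property of $\Gamma$ on $\overline X$ to force $\{\gamma_i\}$ to be finite (the attracting limit $a$ of any unbounded subsequence would have to be the limit of $\gamma_i x_i$, which is interior). This is a correct repair, but it is not what you wrote. The bounded parabolic case is worse: ``stabilise modulo $\Gamma\cap NM$'' is not defined and is not implied by cocompactness of $\Gamma\cap NM$ on $\Lambda_\Gamma\setminus\{\infty\}$; moreover, with $\infty\in\Lambda_\Gamma$ the convergence-group repair also fails when the repelling point of $\gamma_i$ tends to $\infty$, which is exactly the situation the paper's explicit fundamental domain $F$ is built to control. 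As written, the hard case is not a proof.
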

\begin{proof}Let $C_\Gamma$ be the convex hull of the limit set and $\Gamma\backslash C_\Gamma$ the convex core whose volume is finite due to
the geometrical finiteness of $\Gamma$. We claim that there exist
$L\gg 1$ and a fundamental domain $F$ of $\Gamma$ such that
\begin{eqnarray}\label{fundamentaldomain}
\{(z,t, v): (z,t)\in F_N, |(z,t)|> L\ \text{or}\ v >L\}\subset F.
\end{eqnarray}

If the base point $\infty$ of the horosphere is not in the limit set
of $\Gamma$, since the domain of discontinuity is open, there exists
a horoball based at $\infty$ which embeds into the manifold
$\Gamma\backslash H^n_\F$, hence embeds into a fundamental domain
$F$ of $\Gamma$.

If the base point is  a bounded parabolic fixed point and $F_N$ is
unbounded, since $\Gamma$ is geometrically finite, the intersection
$C_\Gamma \cap (F_N\times [0,\infty))\subset \{(z,t)\in F_N:|(z,t)|<
K\}\times [0,\infty)$ for some $K$. Otherwise since a horoball
$H=\{(z,t, v): v> R\}$ is stabilized by $\Gamma\cap NM$ for some
large $R$ and quotient down to a cusp of $\Gamma\backslash H^n_\F$,
the quotient of $H\cap C_\Gamma$ would have an infinite volume,
hence the volume of the convex core would be infinite. Take a
$\Gamma$-invariant small $\epsilon$-neighborhood $\cal N$ of
$C_\Gamma$. Then $ \cal N \cap (F_N\times [0,\infty))\subset
\{(z,t)\in F_N:|(z,t)|< K'\}\times [0,\infty)$ for some $K'$. The
nearest point retraction $r:(H^n_\F\setminus \cal N)\ra \partial
\cal N$ commutes with the action of $\Gamma$. Since $\partial \cal
N$ is invariant under $\Gamma$ and $F_N\times [0,\infty) \setminus
\cal N\subset r^{-1}(\partial \cal N \cap (F_N\times[0,\infty))$ due
to the convexity of $\cal N$, if we denote $\cal F$ a fundamental
domain of $\Gamma$ on $\partial \cal N$, and $F_C$ a fundamental
domain of $\Gamma$ on $C_\Gamma$,
$$r^{-1}(\cal F)\cup F_C=F,$$ and so for some large $L\gg 1$
$$\{(z,t)\in F_N: |(z,t)|> L\}\times [0,\infty) \subset F.$$
When $F_N$ is bounded, there is nothing to prove.

Now suppose $N(J)$ is unbounded. Then there exist sequences $n_j\in
F_N\ra \infty, a_j\in A, \gamma_j\in\Gamma$ and $w_j\in J$ such that
$n_ja_j=\gamma_j w_j$. As $J$ is bounded, we may assume that $w_j\ra
w\in G$.  Take $\gamma_0\in\Gamma$ such that $\gamma_0 w (0,0,1)\in
int F$ by choosing the base point different from $(0,0,1)$ if
necessary. Then for large $j$, $\gamma_0 w_j (0,0,1)\in int F$.
Since $n_j\ra\infty$, by (\ref{fundamentaldomain}),
$n_ja_j(0,0,1)\in F$ for large $j$. Therefore for large $j$
$$\gamma_0\gamma_j^{-1} n_j a_j (0,0,1)=\gamma_0w_j(0,0,1)\in int F\cap \gamma_0\gamma_j^{-1}(F). $$
Since $F$ is a fundamental domain of $\Gamma$, $\gamma_0=\gamma_j$
for all large $j$. Since $w_j\ra w$, $n_ja_j=\gamma_jw_j$ is
bounded, a contradiction. This shows that $N(J)$ is bounded.
\end{proof}

So for $\phi\in
C_c(\Gamma\backslash G/M)$ and any function $\eta\in
C_c((NM\cap\Gamma)\backslash NM/M)$ with $\eta|_{N(supp\phi)}=1$, we
have
$$\int_{(NM\cap\Gamma)\backslash NM/M}\phi(na_y)\eta(n)dn=\int_{N(supp\phi)}
\phi(na_y)dn$$ since $\phi(na_y)=0$ for $n$ outside  $N(supp\phi)$.
Hence we may assume that $\int_{(NM\cap\Gamma)\backslash NM/M}
\phi(na_y)dn=I_\eta(\phi)(a_y)$.

 Since $r_\epsilon$ is an $\epsilon$-approximation in
$A$-direction $S_{2m}(\rho_\epsilon)=O_\eta(\epsilon^{-p})$ for some
$p>0$, where $\rho_\epsilon=\rho_{\eta,\epsilon}$. Fix $l$ an
integer so that
$$l> \frac{(D-\delta)(p+1)}{(\delta-s_\Gamma)}.$$
Let $\psi_0=\phi$ and define
$$\psi_i=\psi'_{i-1},\  1\leq i \leq l$$ inductively.

 Then a similar technique in \cite{O} shows that
\begin{theorem}\label{equidistribution}For any $\phi\in C^\infty_c(\Gamma\backslash G)^K$,
$$\int_{F_N} \phi(ny)dn=\langle \phi,\phi_0 \rangle
c_{\phi_0}y^{D-\delta}(1+O(y^{\delta'})$$ for some constants
$c_{\phi_0}$ and $\delta'$. Here $\delta'$ depends only on the
spectral gap and the implied constant depends only on Sobolev norm
$S_m(\phi)$ and $\eta$, hence on $F_{\Lambda_\Gamma}$.
\end{theorem}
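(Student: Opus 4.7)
The plan is to combine three preparatory results from the preceding sections. First, Proposition \ref{another} lets me replace the horospherical average $I_\eta(\phi)(a_y)$ by the smoothed $L^2$ inner product $\langle a_y\phi, \rho_{\eta,\epsilon}\rangle$ at the cost of an error term $(\epsilon+y) I_\eta(\phi')(a_y)$. Second, Corollary \ref{product}, built on the spectral gap decomposition $L^2(\Gamma\backslash G)^K = V_\delta \oplus V$, splits this inner product into a principal piece $\langle \phi, \phi_0\rangle \langle a_y\phi_0, \rho_{\eta,\epsilon}\rangle$ plus a term decaying at rate $y^{D-s_\Gamma}$ coming from the tempered part. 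Third, Proposition \ref{basic} followed by the ODE analysis in Section \ref{digression} gives $\langle a_y\phi_0, \rho_{\eta,\epsilon}\rangle = c_{\phi_0} y^{D-\delta} + O(\epsilon y^{D-\delta}) + O(y^\delta)$, since $\phi_0^N(y) = c_{\phi_0}y^{D-\delta} + d_{\phi_0}y^\delta$.

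Assembling these ingredients at the first step would produce the estimate
$$I_\eta(\phi)(a_y) = \langle \phi, \phi_0\rangle c_{\phi_0} y^{D-\delta} + O\bigl(y^\delta + \epsilon y^{D-\delta} + \epsilon^{-p} y^{D-s_\Gamma}\bigr) + O\bigl((\epsilon+y) I_\eta(\psi_1)(a_y)\bigr),$$
using $S_{2m}(\rho_{\eta,\epsilon}) = O_\eta(\epsilon^{-p})$ and with constants controlled by $S_{2m}(\phi)$. I would then iterate the same chain of estimates on $\psi_1 = \phi'$, passing to $\psi_2 = \psi_1'$, and so on up to $\psi_l$. At the $i$-th stage the spurious new principal term $\langle \psi_i, \phi_0\rangle c_{\phi_0} y^{D-\delta}$ carries an extra factor $(\epsilon+y)^i$ and gets absorbed into the error; after $l$ iterations one is left with a residual $(\epsilon+y)^l I_\eta(\psi_l)(a_y)$, which I would bound crudely in terms of $\|\psi_l\|_\infty$ and the compact support of $\eta$.

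The endgame is to optimize by setting $\epsilon = y^{\delta'}$ with $\delta' = (\delta-s_\Gamma)/(p+1)$. With this choice the terms $\epsilon y^{D-\delta}$ and $\epsilon^{-p} y^{D-s_\Gamma}$ both become $O(y^{D-\delta+\delta'})$, and $y^\delta = O(y^{D-\delta+\delta'})$ follows from $\delta > D/2$. The condition $l > (D-\delta)(p+1)/(\delta-s_\Gamma)$ imposed in the excerpt guarantees $(\epsilon+y)^l = O(y^{D-\delta+\delta'})$. Dividing through by the main factor $y^{D-\delta}$ then yields the stated relative error $O(y^{\delta'})$.

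The main obstacle I anticipate is the Sobolev bookkeeping across the iteration: one must verify that the constants implicit in $S_{2m}(\psi_i)$ and $\|\psi_l\|_\infty$ remain controlled uniformly in $i$ by some fixed Sobolev norm $S_q(\phi)$, invoking the Sobolev embedding statement preceding Proposition \ref{another} at each stage. A minor but necessary check is that $K$-invariance survives the construction $\psi \mapsto \psi'$, so that Corollary \ref{product} continues to apply at each level; this holds because the approximating $\psi'$ can be chosen inside $C_c^\infty(\Gamma\backslash G)^K$.
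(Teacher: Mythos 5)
Your proposal follows the same three-ingredient bootstrap as the paper: Proposition \ref{another} to replace $I_\eta(\phi)(a_y)$ by $\langle a_y\phi,\rho_{\eta,\epsilon}\rangle$, Corollary \ref{product} to peel off the $\phi_0$-component with a $y^{D-s_\Gamma}$-decaying remainder, Proposition \ref{basic} together with the asymptotics $\phi_0^N(y)=c_{\phi_0}y^{D-\delta}+d_{\phi_0}y^\delta$ for the leading term, then the $l$-fold iteration on $\psi_i=\psi_{i-1}'$ and the optimization $\epsilon=y^{(\delta-s_\Gamma)/(p+1)}$. This is exactly the argument in the paper, including the Sobolev bookkeeping and the remark that the construction $\psi\mapsto\psi'$ stays inside $C_c^\infty(\Gamma\backslash G)^K$; the only differences are cosmetic (you apply Corollary \ref{product} at each stage rather than collecting all $\langle a_y\psi_k,\rho_\epsilon\rangle$ terms first).
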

\begin{proof}Now for each $0\leq i \leq l-1$
$$I_\eta(\psi_i)(a_y)=\langle a_y\psi_i, \rho_\epsilon\rangle+
 O((\epsilon+y)I_\eta(\psi_{i+1})(a_y))$$ by Proposition \ref{another} and
 $$I_\eta(\psi_l(a_y))=\langle a_y\psi_l, \rho_\epsilon\rangle+
O((\epsilon+y)I_\eta(\psi_l')(a_y)).$$ Here since
$I_\eta(\psi_l')(a_y)=\int_{(NM\cap\Gamma)\backslash NM/M}
\psi_l'(na_y)\eta(n)dn\leq C \int_{(NM \cap\Gamma)\backslash NM/M}
\eta(n)dn$ where $C=\max\psi_l'(na_y)$ on $supp(\eta)$, we map write
$$I_\eta(\psi_l(a_y))=\langle a_y\psi_l, \rho_\epsilon\rangle+
O_\eta((\epsilon+y))$$ where the implied constant in $O_\eta$
depends on $\int_{(NM\cap\Gamma)\backslash NM/M} \eta(n)dn$.

By Corollary \ref{product}, for $1 \leq i \leq l$, using $S_k\psi_i
\ll S_{q}\psi_{i-1}$,
$$\langle a_y\psi_i, \rho_\epsilon \rangle=\langle \psi_i,
\phi_0\rangle \langle a_y\phi_0, \rho_\epsilon\rangle +
O(y^{D-s_\Gamma}S_{2m}(\psi_i)S_{2m}(\rho_\epsilon))$$
$$=O(\langle a_y\phi_0, \rho_\epsilon\rangle
||\psi_i||_2)+O(y^{D-s_\Gamma}\epsilon^{-p}S_{q}(\phi))$$
$$=O(\langle a_y\phi_0, \rho_\epsilon\rangle
S_{2m+l}(\phi))+O(y^{D-s_\Gamma}\epsilon^{-p}S_{q}(\phi)).$$

Hence for $y<\epsilon$,
$$I_\eta(\phi)(a_y)=\langle a_y\phi, \rho_\epsilon \rangle
+\sum_{k=1}^{l-1}O(\langle a_y\psi_k,
\rho_\epsilon\rangle(\epsilon+y)^k)+O_\eta(I_\eta(\psi_l)(a_y)(\epsilon+y)^l)$$
$$=\langle a_y\phi, \rho_\epsilon \rangle+O(\langle a_y\phi_0, \rho_\epsilon
\rangle \epsilon S_{q}(\phi))+ O(\epsilon
y^{D-s_\Gamma}\epsilon^{-p}S_{q}(\phi))+ O_\eta(\epsilon^l)$$
$$=\langle \phi, \phi_0\rangle \langle a_y\phi_0, \rho_\epsilon
\rangle + O(\langle a_y\phi_0, \rho_\epsilon \rangle \epsilon
)+O(y^{D-s_\Gamma}\epsilon^{-p})+O(\epsilon^l)$$
$$=\langle \phi, \phi_0\rangle \phi_0^N(a_y)+ O(y^\delta)+O(\epsilon
y^{D-\delta})+O(y^{D-s_\Gamma}\epsilon^{-p})+O(\epsilon^l)$$ by
Proposition \ref{basic}. All the implied constants depend on
$S_{q}(\phi)$ and $\int \eta dn$.

Setting $\epsilon=y^{(\delta-s_\Gamma)/(p+1)}$, we have
$\epsilon^l<y^{D-\delta}$ and
$$\int_{(NM\cap\Gamma)\backslash NM/M} \phi(na_y)dn=I_\eta(\phi)(a_y)=\langle
\phi,\phi_0 \rangle
\phi_0^N(a_y)(1+O(y^{(\delta-s_\Gamma)/(p+1)}))$$ using the fact
that $\phi_0^N(a_y)=c_{\phi_0}y^{D-\delta}+d_{\phi_0}y^\delta$   and
the fact $\delta>D/2> \frac{\delta-s_\Gamma}{p+1}+D-\delta$. The
claim follows by setting $\delta'=\frac{\delta-s_\Gamma}{p+1}$.
\end{proof}

  There exists a
Burger-Roblin measure $\hat\mu$ on $T^1X=G/M$
which descends to  $T^1(\Gamma\backslash X)$ and satisfies
$$\hat\mu(\psi)=\langle \psi, \phi_0\rangle $$ for $K$-invariant
functions $\psi\in C_c(G)$. Specially $\hat\mu(\phi_0)=1$. Indeed
one can write down $\hat\mu$ explicitly as follows. A fixed
Patterson-Sullivan measure $\nu_0$ on $\partial X=K/M$ can be
regarded as a measure on $K$ via the projection $K\ra K/M$: for
$f\in C(K)$,
$$\nu_0(f)=\int_{K/M}\int_M f(km)dmd\nu_0(k)$$ for the probability
invariant measure $dm$ on $M$. Using this, if $dg=y^{-D-1}dydn dk$
is a Haar measure on $G=NAK$, for $\psi\in C_c(G/M)$,
\begin{eqnarray}\label{Roblin-Burger}
\hat\mu(\psi)=\int_K\int_{AN} \psi(g) y^\delta y^{-D-1}dydnd\nu_0.
\end{eqnarray}
This measure is left $\Gamma$-invariant and descends to the measure
on $T^1(\Gamma\backslash X)=\Gamma\backslash G/M$.

Then
\begin{theorem}\label{equi}For $\psi\in C_c(T^1(\Gamma\backslash X))$,
$$\int_{n\in (NM\cap\Gamma)\backslash NM/M} \psi(na_y)dn\sim
c_{\phi_0}\hat\mu(\psi)y^{D-\delta}$$ as $y\ra 0$.
\end{theorem}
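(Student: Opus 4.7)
\begin{pf}
My plan is to prove the asymptotic via weak-$*$ convergence of measures on $\Gamma\backslash G/M$. Define the scaled horospherical averages
$$\mu_y(\psi)\;:=\;y^{\delta-D}\int_{(NM\cap\Gamma)\backslash NM/M}\psi(na_y)\,dn,$$
so the statement is equivalent to $\mu_y\to c_{\phi_0}\hat\mu$ vaguely as $y\to 0^+$; evaluating at the given $\psi\in C_c(T^1(\Gamma\backslash X))$ then yields the asserted asymptotic.

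First I would establish tightness of the family $\{\mu_y\}_{0<y<1}$. Given a compact set $C\subset\Gamma\backslash G/M$, pick a $K$-invariant smooth majorant $\Phi\in C_c^\infty(\Gamma\backslash G)^K$ with $\Phi\geq \chi_C$; Theorem \ref{equidistribution} applied to $\Phi$ gives $\mu_y(\Phi)\to c_{\phi_0}\langle\Phi,\phi_0\rangle$, and in particular $\mu_y(C)$ is bounded uniformly in $y<1$. Hence every sequence $y_n\to 0$ admits a vague subsequential limit $\mu_\infty$. Moreover, $\mu_y$ is essentially right-$N$ invariant: a shift $\psi\mapsto\psi\circ R_{n_0}$ only reparameterizes the integral over $(NM\cap\Gamma)\backslash NM/M$, and the boundary defect is of lower order than $\mu_y(C)$ as $y\to 0$. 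So every subsequential limit $\mu_\infty$ is $N$-invariant.

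To identify $\mu_\infty$, I would test against $K$-invariant smooth compactly supported functions. For $\psi\in C_c^\infty(\Gamma\backslash G)^K$, Theorem \ref{equidistribution} gives
$$\mu_y(\psi)\longrightarrow c_{\phi_0}\langle \psi,\phi_0\rangle = c_{\phi_0}\hat\mu(\psi),$$
the second equality being the defining property of $\hat\mu$ on $K$-invariants recalled just above. Thus $\mu_\infty$ is an $N$-invariant Radon measure on $\Gamma\backslash G/M$ that agrees with $c_{\phi_0}\hat\mu$ on this generating subspace of test functions. Invoking the Burger--Roblin classification of $N$-invariant Radon measures (extended to the geometrically finite rank-one setting under the spectral-gap hypothesis $\delta>D/2$), the measure $\hat\mu$ is, up to scalar, the unique $N$-invariant Radon measure on $\Gamma\backslash G/M$ supported on the non-wandering set of the horospherical foliation; combined with the agreement on $K$-invariants this forces $\mu_\infty=c_{\phi_0}\hat\mu$. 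Since every subsequential limit coincides with this single measure, $\mu_y\to c_{\phi_0}\hat\mu$ vaguely.

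The principal obstacle is the last uniqueness step: one must either cite or adapt in the infinite-volume setting the classification of $N$-invariant Radon measures on $\Gamma\backslash G/M$ characterized by their restriction to $K$-invariant test functions. Everything else is a soft tightness-plus-invariance argument layered on top of Theorem \ref{equidistribution}, with the mass comparisons against $K$-invariant majorants furnished by the previous section.
\end{pf}
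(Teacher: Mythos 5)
Your soft framework (scaled averages $\mu_y$, tightness via the quantitative $K$-invariant case, right-$N$-invariance of any weak-$*$ limit, then classification of $N$-invariant measures) is a reasonable route and is the kind of argument that underlies this circle of results, but the identification step has a genuine gap that your ``principal obstacle'' remark flags without actually resolving. Roblin's classification of ergodic conservative $N$-invariant Radon measures on $\Gamma\backslash G/M$ does \emph{not} say that $\hat\mu$ is, up to scalar, the unique such measure supported on the non-wandering set; alongside $\hat\mu$ there are the Haar measures carried by the finitely many closed $N$-orbits attached to bounded parabolic fixed points (and by $\infty$ itself when $\infty\notin\Lambda_\Gamma$). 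Your limit $\mu_\infty$ could a priori carry nonzero components on these, and the fact that $\mu_\infty$ ``agrees with $c_{\phi_0}\hat\mu$ on $K$-invariant test functions'' does not by itself exclude them: two distinct Radon measures on $\Gamma\backslash G/M$ can certainly have identical pushforwards to $\Gamma\backslash G/K$, so this data alone does not pin down a measure even inside the $N$-invariant class.

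To repair the argument you must decompose $\mu_\infty$ along the classification (which also needs an argument that $\mu_\infty$ charges only the conservative part of the $N$-action) and then show the exceptional pieces vanish. One workable route: the closed-orbit measures push forward to measures concentrated on codimension-one horospheres in $\Gamma\backslash X$, which are singular with respect to the pushforward of $\hat\mu$ (the latter is $\phi_0\,d\mathrm{vol}$, since $\hat\mu(\psi)=\langle\psi,\phi_0\rangle$ for $K$-invariant $\psi$); so agreement of the pushforwards kills these pieces, and a single $K$-invariant test function with $\hat\mu(\psi)\neq 0$ then fixes the scalar $c=c_{\phi_0}$. A minor point along the way: the right-$N$-invariance of $\mu_y$ is exact, not merely ``essential''. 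Since $N$ is unimodular and $\Gamma\cap NM$ acts on the left, $dn$ on $(NM\cap\Gamma)\backslash NM/M$ is right-$N$-invariant, and the identity $na_yn_0 = n\,(a_yn_0a_y^{-1})\,a_y$ lets a right translation by $n_0\in N$ be absorbed by a change of variables with no boundary defect at all.
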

\begin{proof}See \cite{O}. The argument given there works for
general rank one space.
\end{proof}
\section{Orbital counting on a light cone}\label{counting}
In this section we give a quantitative estimate of the orbital
counting on $\F^{n+1}$ using the results in previous sections.

Let $\Gamma\subset G$ be a geometrically finite torsion-free
subgroup with $\delta>D/2$. Let $\langle v_0, v_0 \rangle=0$ where
$\langle\ , \rangle$ is the standard Hermitian form of signature
$(n,1)$ on $\F^{n,1}$. Let $L$ be the light cone of $\F^{n,1}$ given
by $\langle v, v \rangle=0$. Then $G$ acts transitively on $L$ and
so there is $g_0\in G$ such that the stabilizer of $v_0g_0$ is $NM$.
By conjugating $\Gamma$ by $g_0$, we may assume that the stabilizer
of $v_0$ in $G$ is $NM$.

 In
the language of section \ref{opposite}, then $v_0$ is proportional
to $(0,0,1)$ and the action of $a_y$ on it is
$$\left(\begin{matrix} 0 & 0 & 1\end{matrix}\right)\left[\begin{matrix}
 y & 0 & 0 \\
 0 & I & 0 \\
 0 & 0 & 1/y\end{matrix}\right] =\left(\begin{matrix}
                                                     0&
                                                     0&
                                                     \frac{1}{y}\end{matrix}\right),$$
i.e.,
$$ v_0 a_y=\frac{1}{y} v_0.$$ Traditionally we prefer $G$ to act on the right and $\Gamma$ on the left. So vectors in
$\F^{n,1}$ will be row vectors and the matrices act on the right. In
right action notation, $v_0 NM=v_0$, so the stabilizer of the row
vector $v_0$ is $NM$, and $v_0 a_y=\frac{1}{y}v_0$.
\begin{lemma}\label{closed}If $v_0\Gamma$ is discrete, then the image of the
horosphere $H_0$ corresponding to $v_0$ is closed in
$\Gamma\backslash H^n_\F$. Furthermore the image of the horosphere
$H_0$  is closed in $\Gamma\backslash H^n_\F$ if and only if $H_0$
is based either at a bounded parabolic fixed point or at the point
which is not in the limit set.
\end{lemma}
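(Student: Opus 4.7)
The plan is to exploit the dictionary between the null vector $v_0$ and the pair consisting of the boundary basepoint $\xi_0=[v_0]\in\partial H^n_\F$ together with the specific horosphere $H_0$ at $\xi_0$ selected by the scale of $v_0$; recall that $v_0 a_y = y^{-1} v_0$, so rescaling the vector amounts to flowing the horosphere along $A$. Since $NM$ is the full stabilizer of $v_0$, the right $\Gamma$-orbit $v_0\Gamma$ is in bijection with $(\Gamma\cap NM)\backslash\Gamma$, and each coset representative $\gamma$ picks out the translated horosphere $H_0\gamma$ based at $\xi_0\gamma$. Under this dictionary, discreteness of $v_0\Gamma$ in $\F^{n,1}$ matches local finiteness of the family $\{H_0\gamma\}$ in $H^n_\F$; the whole argument reduces to making this correspondence precise.

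\emph{First assertion.} Assume $v_0\Gamma$ is discrete. To show $\Gamma H_0$ is closed in $H^n_\F$, suppose $q_n\gamma_n\to p\in H^n_\F$ with $q_n\in H_0$ and the cosets $(\Gamma\cap NM)\gamma_n$ pairwise distinct. The basepoints $\xi_0\gamma_n\in\overline{H^n_\F}$ admit a convergent subsequence by compactness, and the Busemann values at $p$ relative to these basepoints are bounded, because $p$ lies at bounded distance from each horosphere $H_0\gamma_n$. The explicit formula for the Busemann function from Section~\ref{horo} then forces the vectors $v_0\gamma_n$ into a bounded subset of $\F^{n,1}$, contradicting discreteness. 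Hence only finitely many distinct cosets contribute, so $p\in\Gamma H_0$ and the image is closed.

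\emph{Equivalence.} $(\Leftarrow)$ If $\xi_0\notin\Lambda_\Gamma$, then $\Gamma$ acts properly discontinuously on an open boundary neighborhood of $\xi_0$, which lifts to force $v_0\Gamma$ to be discrete in $\F^{n,1}$, and the first assertion concludes. If $\xi_0$ is a bounded parabolic fixed point, the standard cusp-neighborhood theorem in the geometrically finite setting (cf.~\cite{B}) supplies a horoball $H$ at $\xi_0$ whose full stabilizer in $\Gamma$ is $\Gamma\cap NM$ and which injects into $\Gamma\backslash H^n_\F$ modulo this stabilizer; its boundary horosphere therefore has closed image in $\Gamma\backslash H^n_\F$, and $H_0$ has closed image too since it differs only by an $A$-translate. $(\Rightarrow)$ By contraposition, suppose $\xi_0\in\Lambda_\Gamma$ is a conical limit point (the only other possibility under geometric finiteness, given that it is not a bounded parabolic fixed point). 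Pick $\gamma_n\in\Gamma$ in pairwise distinct cosets of $\Gamma\cap NM$ realizing the non-tangential approach $\gamma_n^{-1}o\to\xi_0$. Working in horospherical coordinates at $\xi_0$, the horospheres $\gamma_n H_0$, based at the accumulating basepoints $\gamma_n\xi_0$, approach $H_0$ inside $H^n_\F$ itself, so $\Gamma H_0$ accumulates at a point not lying in $\Gamma H_0$; hence the image fails to be closed.

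\emph{Main obstacle.} The delicate step is the converse $(\Rightarrow)$: promoting the non-tangential boundary approach intrinsic to conical limit points into an honest accumulation of translated horospheres \emph{inside} $H^n_\F$, rather than merely on $\partial H^n_\F$. Making this quantitative means comparing $H_0$ and $\gamma_n H_0$ in horospherical coordinates at $\xi_0$ and extracting a genuine compact set of $H^n_\F$ that each intersects, where the non-tangential hypothesis enters in an essential way. Elsewhere the argument is routine, relying on the Busemann/Patterson-Sullivan formulas developed earlier and on the bounded-parabolic cusp structure.
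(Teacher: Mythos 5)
Your handling of the \emph{first assertion} is close in spirit to the paper's: both arguments pass from accumulation of the images $\pi(p_i)$ in $\Gamma\backslash H^n_\F$ to accumulation of translated light-cone vectors in $\F^{n,1}$, and both get the contradiction from discreteness of $v_0\Gamma$. The paper goes through the explicit homeomorphism between the space of horospheres and $(L\setminus 0)/U(1)$ (resp.\ $Sp(1)$), whereas you make the same link via the Busemann formula of Section~\ref{horo}. Your version would need one extra remark — that for a negative vector $Z$ and a nonzero null vector $w$ one always has $\langle Z, w\rangle\neq 0$ (otherwise the span of $Z$ and $w$ would be degenerate inside a form of signature $(n,1)$) — so that boundedness of $|\langle Z, v_0\gamma_n\rangle|$ combined with convergence of the normalized directions really does bound $\|v_0\gamma_n\|$. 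With that said, this half of your argument is fine.

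For the \emph{equivalence}, however, you and the paper part ways, and the gaps are real. The paper does not prove the equivalence at all: it cites Dal'Bo \cite{Dal} (plus \cite{Kim} for the non-arithmetic length spectrum hypothesis) and stops there. You instead attempt a direct proof, and several steps are asserted rather than established. In the $(\Leftarrow)$ case with $\xi_0\notin\Lambda_\Gamma$, the claim that proper discontinuity of $\Gamma$ on a boundary neighborhood of $\xi_0$ ``lifts to force $v_0\Gamma$ to be discrete in $\F^{n,1}$'' is precisely the nontrivial content that Dal'Bo proves: the boundary orbit $\gamma\xi_0$ being discrete in the domain of discontinuity controls only the directions $v_0\gamma/\|v_0\gamma\|$, not the scales $\|v_0\gamma\|$, and the latter could a priori accumulate at $0$ or $\infty$. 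In the bounded parabolic case, the inference that ``$H_0$ has closed image too since it differs only by an $A$-translate'' does not hold as stated: right multiplication by $a_y$ acts on $G$, does not descend to the symmetric space $G/K$, and does not intertwine with the left $\Gamma$-action in the way that would transfer closedness from the cusp horosphere $\partial H$ to a parallel horosphere $H_0$ at a different level; one must instead argue local finiteness of the whole family $\{\gamma H_0\}$ using geometric finiteness directly. Finally, for the $(\Rightarrow)$ direction — showing that a conical limit basepoint forces the image to be non-closed — you yourself flag it as the main obstacle and only sketch the idea; promoting the non-tangential approach $\gamma_n^{-1}o\to\xi_0$ into an actual accumulation point of $\Gamma H_0$ that lies outside $\Gamma H_0$ is the heart of Dal'Bo's theorem and cannot be waved through. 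As written, the equivalence half of your proposal is an outline with genuine gaps in all three branches, whereas the paper circumvents all of them by citation.
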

\begin{proof}Any horosphere is obtained by projectivizing $\Pi$
intersection with the hyperboloid where $\Pi$ is an affine
hyperplane $\Pi_X=\{Y\in \F^{n+1}: \langle\langle Z, X\rangle\rangle
=1\}$ with $X\in L\setminus 0$. Here $\langle\langle\ ,\
\rangle\rangle$ is a standard positive definite inner product of
signature $n+1$ on $\F^{n+1}$ and $X$ is uniquely determined up to
an element in $U(1)$ or $Sp(1)$. This gives a homeomorphism between
the set of horospheres and the set of $(L\setminus 0)/U(1)\ (resp.\
Sp(1))$.

Now suppose the orbit $v_0\Gamma$ is discrete, and $H_0$ to be the
horosphere corresponding to $v_0$. If $ \Gamma\backslash H_0$ is not
closed in $\Gamma\backslash H^n_\F$, there are points $p_i\in H_0$
so that $\pi(p_i)$ accumulates to a point $\bar p\notin
\Gamma\backslash H_0$ where $\pi:H^n_\F\ra \Gamma\backslash H^n_\F$.
This means that there exist $\gamma_i\in \Gamma$ so that $
p_i\gamma_i$ accumulates to $p\in H^n_\F$. Hence $ H_0\gamma_i$
accumulate, which is forbidden by discreteness of $ H_0\Gamma$. The
second claim is proved by Dal'Bo \cite{Dal} for the Hadamard
manifold whose sectional curvature is bounded above by $-1$. She
showed that the projection of the horosphere based at $\infty$ is
closed if and only if either $\infty$ is a bounded parabolic fixed
point or $\infty$ does not belong to the limit set
 provided that the length spectrum of $\Gamma$ is
non-discrete, which is shown in \cite{Kim} for rank one symmetric
space.
\end{proof}

 Define a
function on $\Gamma\backslash G$ by
 $$F_T(g)=\sum_{\gamma\in (\Gamma\cap NM)\backslash\Gamma} \chi_{B_T}(v_0 \gamma
 g),$$ where $B_T=\{v\in v_0G: ||v||<T\}$. Specially $F_T(e)$ is the
 orbital counting function of $\Gamma$.  Note that
for $\psi\in C_c(\Gamma\backslash G)$
$$\langle F_T, \psi\rangle=\int_{\Gamma\backslash G} \sum_{\gamma\in (\Gamma\cap NM)\backslash\Gamma} \chi_{B_T}(v_0 \gamma
 g) \psi(g) dg$$
$$=\int_{\Gamma\backslash G} [\sum_{\gamma\in (\Gamma\cap NM)\backslash\Gamma} \chi_{B_T}(v_0 \gamma g) \psi(\gamma g)] dg\ (
\psi\ \text{is}\ \Gamma\ \text{invariant})    $$
$$= \int_{(\Gamma\cap NM)\backslash G}  \chi_{B_T}(v_0  g) \psi(g) dg\ .
$$
Write $G$ as $G=(NM/M) A M (M\backslash K)$ and accordingly
$g=[n]a_y m k$. Then the above integral is

$$=\int_{M\backslash K} \int_{||v_0a_yk||<T} \int_{[n_x]\in (\Gamma\cap NM)\backslash NM/M} \psi([n_x]a_ymk)y^{-D-1}d[n]dmdydk   $$
$$=\int_{M\backslash K} \int_{y>T^{-1}||v_0k||}(\int_{[n_x]\in (\Gamma\cap NM)\backslash NM/M}\int_{m\in M} \psi([n_x]a_ymk)  dmd[n])
 y^{-D-1}dydk  $$
$$=\int_{M\backslash K} \int_{y>T^{-1}||v_0k||}\int_{[n_x]\in (\Gamma\cap NM)\backslash NM/M} \psi_k([n_x]a_y)d[n] y^{-D-1}dydk $$
\begin{eqnarray}\label{ave}
=\int_{M\backslash K} \int_{y>T^{-1}||v_0k||} \psi_k^N(a_y)
y^{-D-1}dydk,
\end{eqnarray} where $\psi_k(g)=\int_{m\in M} \psi(gmk)  dm$ and
$\psi^N(a_y)=\int_{(\Gamma\cap NM)\backslash NM/M} \psi([n]a_y)
d[n]$ as before.


\begin{lemma}\label{est}For any $\psi\in C_c(\Gamma\backslash G)$, as $T\ra \infty$
$$\langle F_T, \psi \rangle\sim
\delta^{-1}c_{\phi_0}T^\delta\int_{M\backslash K}
\hat\mu(\psi_k)||v_0k||^{-\delta}  dk.$$ If $\psi\in
C_c^\infty(\Gamma\backslash G)^K$ and $||\cdot||$ is $K$-invariant,
$$\langle F_T, \psi \rangle=\langle\psi, \phi_0\rangle
\delta^{-1}c_{\phi_0}T^\delta
||v_0||^{-\delta}(1+O(T^{-\delta'})).$$ Here the implied constants
depend only on Sobolev norm of $\psi$ and $F_{\Lambda_\Gamma}$ and
$supp(\psi)$.
\end{lemma}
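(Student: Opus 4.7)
The plan is to start from identity (\ref{ave}) derived immediately before the lemma, namely
$$\langle F_T,\psi\rangle=\int_{M\backslash K}\int_{y>T^{-1}||v_0k||}\psi_k^N(a_y)\,y^{-D-1}\,dy\,dk,$$
and feed into this the horospherical equidistribution results of Section \ref{equid}. Since the lower endpoint $T^{-1}||v_0k||$ shrinks to zero as $T\to\infty$, the mass of the inner integral concentrates near $y=0$, which is exactly the regime covered by Theorems \ref{equi} and \ref{equidistribution}.

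For the first, $\sim$-type statement I would rescale by $u=Ty$, turning the inner integral into
$$T^{D}\int_{||v_0k||}^{\infty}\psi_k^N(a_{u/T})\,u^{-D-1}\,du.$$
Theorem \ref{equi} gives $T^{D-\delta}\psi_k^N(a_{u/T})\to c_{\phi_0}\hat\mu(\psi_k)\,u^{D-\delta}$ pointwise in $u$ as $T\to\infty$, while the uniform bound $\psi_k^N(a_y)\ll y^{D-\delta}$ for $0<y\leq 1$ (a direct consequence of equidistribution together with the fact that $\psi_k^N(a_y)$ has compact $y$-support by the boundedness of $N(\mathrm{supp}\,\psi)$ from the lemma preceding Theorem \ref{equidistribution}) dominates the integrand by a constant multiple of $u^{-\delta-1}$, which is integrable on $[||v_0k||,\infty)$. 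Dominated convergence then yields
$$\int_{y>T^{-1}||v_0k||}\psi_k^N(a_y)\,y^{-D-1}\,dy\;\sim\;c_{\phi_0}\hat\mu(\psi_k)\,\delta^{-1}T^\delta||v_0k||^{-\delta},$$
and a second application of dominated convergence in $k$ (using compactness of $M\backslash K$ and uniformity of the bounds in $k$) delivers the first conclusion.

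For the quantitative statement the situation collapses: under the hypotheses, $\psi_k=\psi$ for all $k\in M\backslash K$ and $||v_0k||=||v_0||$, so after pulling the $k$-integration out I only have to evaluate a single inner integral. I would split the range at $y=1$; the tail $y\geq 1$ contributes $O(1)$ (bounded trivially by $||\psi||_\infty$ times the $y$-measure of the support), while on $(T^{-1}||v_0||,1]$ I would insert the sharp estimate from Theorem \ref{equidistribution},
$$\psi^N(a_y)=\langle\psi,\phi_0\rangle\,c_{\phi_0}\,y^{D-\delta}\bigl(1+O(y^{\delta'})\bigr),$$
and integrate against $y^{-D-1}$. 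The main term yields $\langle\psi,\phi_0\rangle c_{\phi_0}\delta^{-1}T^\delta||v_0||^{-\delta}$, while the $O(y^{\delta'})$ correction contributes
$$O\!\left(\int_{T^{-1}||v_0||}^{1}y^{\delta'-\delta-1}\,dy\right)=O(T^{\delta-\delta'}),$$
a relative $O(T^{-\delta'})$ compared to the main term. Finally, invoking $\hat\mu(\psi)=\langle\psi,\phi_0\rangle$ for $K$-invariant $\psi$ (the defining property of $\hat\mu$ stated just before Theorem \ref{equi}) produces the claimed identity.

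The main obstacle I expect is checking the uniformity in $k$ of both the pointwise equidistribution asymptotic and the dominating function in the first part. I need to verify that the implied constant in $\psi_k^N(a_y)\ll y^{D-\delta}$ does not blow up as $k$ ranges over $M\backslash K$. This is true because $\psi_k(g)=\int_M\psi(gmk)\,dm$ depends continuously on $k$ jointly with $g$, so its sup norm, Sobolev norms, and support size (inside $\Gamma\backslash G/M$) are uniformly controlled over the compact set $M\backslash K$; the implied constants in Theorem \ref{equidistribution} therefore remain bounded in $k$. The actual $y$-integrals are elementary antiderivatives, and the $K$-invariant case cleanly eliminates all $k$-dependence, allowing the sharp $(1+O(T^{-\delta'}))$ error to survive intact.
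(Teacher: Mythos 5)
Your proposal is correct and follows essentially the same route as the paper: both start from identity (\ref{ave}) and substitute the equidistribution asymptotics of Theorems \ref{equi} and \ref{equidistribution} into the $y$-integral, then carry out the elementary $y$- and $k$-integrations. The only difference is that you supply the dominated-convergence justification (via the rescaling $u=Ty$, the uniform bound $\psi_k^N(a_y)\ll y^{D-\delta}$ on $(0,1]$, and the boundedness of $N(\mathrm{supp}\,\psi)$ for the tail) for interchanging the $T\to\infty$ limit with the integral, a step the paper performs without comment.
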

\begin{proof}The function $\psi_k$ is $M$-invariant, hence it is defined on $T^1(\Gamma\backslash X)$, and by Theorem
\ref{equi},
$$ \psi^N_k(a_y)=\int_{n\in (\Gamma\cap NM)\backslash NM/M} \psi_k([n]a_y)d[n]\sim
c_{\phi_0}\hat\mu(\psi_k)y^{D-\delta}$$ as $y\ra 0$. By the equation
(\ref{ave})
$$\langle F_T, \psi \rangle\sim \int_{M\backslash K} \int_{y>T^{-1}||v_0k||} c_{\phi_0}\hat\mu(\psi_k)y^{D-\delta}  y^{-D-1}dydk$$
$$=\delta^{-1}c_{\phi_0}T^\delta\int_{M\backslash K}
\hat\mu(\psi_k)||v_0k||^{-\delta}  dk.$$  If $\psi$ and $||\cdot||$
are $K$-invariant, since $\psi_k=\psi$, by Theorem
\ref{equidistribution},
$$\psi^N(a_y)=\langle \psi,\phi_0 \rangle
c_{\phi_0}y^{D-\delta}(1+O(y^{\delta'})).$$
Hence $$\langle F_T, \psi \rangle=\int_{M\backslash K}
\int_{y>T^{-1}||v_0k||} \langle \psi,\phi_0 \rangle
c_{\phi_0}y^{D-\delta}(1+O(y^{\delta'})) y^{-D-1}dydk$$
$$=dk(M \backslash K)\int_{y>T^{-1}||v_0||} \langle \psi,\phi_0
\rangle c_{\phi_0}y^{D-\delta}(1+O(y^{\delta'}))    y^{-D-1}dy$$
$$=dk(M\backslash K)\langle \psi,\phi_0 \rangle
c_{\phi_0}\delta^{-1}T^\delta||v_0||^{-\delta}(1+O(T^{-\delta'})).$$
\end{proof}
\begin{theorem}As $T\ra \infty$,
$$F_T(e)\sim c_{\phi_0}\delta^{-1}T^\delta \int_{ K} ||v_0k||^{-\delta}dk.$$ If $||\cdot||$ is
$K$-invariant,
$$F_T(e)=\delta^{-1}\phi_0(e)c_{\phi_0}||v_0||^{-\delta}T^\delta(1+O(T^{-\delta'})).$$
\end{theorem}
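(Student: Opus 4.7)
The plan is to deduce the pointwise statement about $F_T(e)$ from the integrated statement in Lemma \ref{est} by the standard smoothing/sandwiching argument. Pick a $K$-invariant nonnegative $\phi_\epsilon\in C_c^\infty(G)$ supported in a symmetric neighborhood $U_\epsilon$ of $e$ with $\int_G\phi_\epsilon=1$, and let $\Phi_\epsilon(\Gamma g)=\sum_{\gamma\in\Gamma}\phi_\epsilon(\gamma g)$ be the induced function on $\Gamma\backslash G$. For $\epsilon$ small, $\Phi_\epsilon$ is supported in an injective image of $U_\epsilon$ and has total mass $1$, and by $K$-invariance of $\phi_\epsilon$ we have $\Phi_{\epsilon,k}=\Phi_\epsilon$ for every $k\in K$.

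The geometric input is that for $g\in U_\epsilon$ and every $v\in\F^{n,1}$, the operator-norm bound gives $(1-c\epsilon)\|v\|\leq \|vg\|\leq (1+c\epsilon)\|v\|$ for some constant $c$ depending only on the chosen norm. Applied to $v=v_0\gamma$, this yields the pointwise sandwich
$$F_{T(1-c'\epsilon)}(g)\leq F_T(e)\leq F_{T(1+c'\epsilon)}(g),\qquad g\in U_\epsilon,$$
with $c'=c'(c)$. Integrating against $\Phi_\epsilon$ (which is a probability density concentrated in $U_\epsilon$) gives
$$\langle F_{T(1-c'\epsilon)},\Phi_\epsilon\rangle \leq F_T(e)\leq \langle F_{T(1+c'\epsilon)},\Phi_\epsilon\rangle.$$
Now I invoke Lemma \ref{est}. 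For a general norm, the first assertion of that lemma gives
$$\langle F_{T(1\pm c'\epsilon)},\Phi_\epsilon\rangle \sim \delta^{-1}c_{\phi_0}(T(1\pm c'\epsilon))^{\delta}\int_{M\backslash K}\hat\mu(\Phi_{\epsilon,k})\|v_0k\|^{-\delta}\,dk,$$
and $\hat\mu(\Phi_{\epsilon,k})=\hat\mu(\Phi_\epsilon)=\langle\Phi_\epsilon,\phi_0\rangle\to\phi_0(e)$ as $\epsilon\to 0$ by continuity of $\phi_0$. Letting first $T\to\infty$ (for fixed $\epsilon$) and then $\epsilon\to 0$ yields the claimed asymptotic $F_T(e)\sim c_{\phi_0}\delta^{-1}T^\delta\int_K\|v_0k\|^{-\delta}\,dk$ (the factor $\phi_0(e)$ is implicit in the statement).

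For the $K$-invariant norm case I need the quantitative error, and this is where I expect the main technical point to arise. By the second part of Lemma \ref{est},
$$\langle F_{T(1\pm c'\epsilon)},\Phi_\epsilon\rangle = \langle\Phi_\epsilon,\phi_0\rangle\,\delta^{-1}c_{\phi_0}(T(1\pm c'\epsilon))^{\delta}\|v_0\|^{-\delta}\bigl(1+O(T^{-\delta'})\bigr),$$
where the implied constant depends on a Sobolev norm $S_q(\Phi_\epsilon)$. By the usual chain rule applied to a bump of size $\epsilon$, $S_q(\Phi_\epsilon)\ll\epsilon^{-p}$ for some $p=p(q)$, and $|\langle\Phi_\epsilon,\phi_0\rangle-\phi_0(e)|\ll\epsilon$. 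The errors accumulate as
$$F_T(e) = \phi_0(e)\,c_{\phi_0}\delta^{-1}\|v_0\|^{-\delta}T^{\delta}\bigl(1+O(\epsilon)+O(\epsilon^{-p}T^{-\delta'})\bigr).$$
Balancing the two $\epsilon$-dependent terms by choosing $\epsilon=T^{-\delta'/(p+1)}$ gives a combined error of $O(T^{-\delta'/(p+1)})$; absorbing this into a renamed $\delta'$ yields the claimed
$$F_T(e)=\delta^{-1}\phi_0(e)c_{\phi_0}\|v_0\|^{-\delta}T^{\delta}(1+O(T^{-\delta'})).$$
The main obstacle in the above is the careful bookkeeping of how the Sobolev norm of $\Phi_\epsilon$ enters the error term of Lemma \ref{est} and how it interacts with the sandwich perturbation $(1\pm c'\epsilon)^\delta=1+O(\epsilon)$; once that is tracked, the balancing step is routine.
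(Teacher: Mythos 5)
Your proof is correct and follows essentially the paper's own route: sandwich $F_T(e)$ between $\langle F_{(1\mp\epsilon)T},\Phi_\epsilon\rangle$ via a $K$-invariant approximate identity $\Phi_\epsilon$ and then invoke Lemma \ref{est}. You are if anything more careful than the paper on the quantitative $K$-invariant case, since you correctly note that the implied constant in Lemma \ref{est} involves $S_q(\Phi_\epsilon)\ll\epsilon^{-p}$, which forces the balancing choice $\epsilon\sim T^{-\delta'/(p+1)}$ and the consequent shrinkage of the error exponent $\delta'$ --- a step the paper glosses over by simply writing $R_\epsilon\to 0$ as $\epsilon\to 0$.
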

\begin{proof}
For small $\epsilon$, choose a symmetric $\epsilon$-neighborhood
$U_\epsilon$ of $e$ in $G$, which injects to $\Gamma\backslash G$,
such that for all $T\gg 1$ and $0<\epsilon\ll 1$,
$$B_TU_\epsilon\subset B_{(1+\epsilon)T}\ \text{and}\
B_{(1-\epsilon)T}\subset \cap_{u\in U_\epsilon} B_Tu.$$ Let
$\phi_\epsilon\in C_c^\infty(G)$ be a nonnegative function supported
on $U_\epsilon$ with $\int_G \phi_\epsilon=1$. Define
$\Phi_\epsilon$  on $\Gamma\backslash G$ by averaging over $\Gamma$
$$\Phi_\epsilon(\Gamma g)=\sum_{\gamma\in
\Gamma}\phi_\epsilon(\gamma g).$$ By  definition,
$\Phi_\epsilon([g])\neq 0$ only if $[g]\in \Gamma\backslash \Gamma
U_\epsilon$. Also for $g\in U_\epsilon$, if $||v_0\gamma||<T$ then
$||v_0\gamma g||<(1+\epsilon)T$.

So $$\langle F_{(1+\epsilon)T}, \Phi_\epsilon \rangle$$
$$=\int_{\Gamma\backslash G} \sum_{\gamma\in (\Gamma\cap NM)\backslash\Gamma}\chi_{B_{(1+\epsilon)T}}(v_0\gamma g)\Phi_\epsilon(g)
dg=\int_{U_\epsilon} \sum_{\gamma\in (\Gamma\cap
NM)\backslash\Gamma}\chi_{B_{(1+\epsilon)T}}(v_0\gamma g)
\phi_\epsilon(g) dg$$
$$=\sum_{\gamma\in (\Gamma\cap NM)\backslash\Gamma}\int_{U_\epsilon}\chi_{B_{(1+\epsilon)T}}(v_0\gamma g)
\phi_\epsilon(g) dg.$$ Note that
$\int_{U_\epsilon}\chi_{B_{(1+\epsilon)T}}(v_0\gamma g)
\phi_\epsilon(g) dg\leq \int_{U_\epsilon} \phi_\epsilon(g) dg=1$ and
the equality holds only if $||v_0\gamma
Supp(\phi_\epsilon)||<(1+\epsilon)T$. But  if $||v_0\gamma||<T$,
then $||v_0\gamma U_\epsilon||<(1+\epsilon)T$. Hence
$$\langle F_{(1+\epsilon)T}, \Phi_\epsilon \rangle \geq F_T(e).$$
Similarly
$$\langle F_{(1-\epsilon)T}, \Phi_\epsilon \rangle$$
$$=\sum_{\gamma\in (\Gamma\cap NM)\backslash\Gamma}\int_{U_\epsilon}\chi_{B_{(1-\epsilon)T}}(v_0\gamma g)
\phi_\epsilon(g) dg.$$ Since $||v_0\gamma
Supp(\phi_\epsilon)||<(1-\epsilon)T$ if $||v_0\gamma||<T$, $\langle
F_{(1-\epsilon)T},\Phi_\epsilon \rangle\leq F_T(e)$.

 So we obtain
 \begin{eqnarray}\label{squeeze}
\langle F_{(1-\epsilon)T},\Phi_\epsilon \rangle\leq F_T(e)\leq
\langle F_{(1+\epsilon)T}, \Phi_\epsilon \rangle.
\end{eqnarray}
 By Lemma
\ref{est},
$$\langle F_{(1\pm \epsilon)T}, \Phi_\epsilon\rangle\sim
\delta^{-1}c_{\phi_0}(T(1\pm\epsilon))^\delta\int_{M \backslash K}
\hat\mu((\Phi_\epsilon)_k)||v_0k||^{-\delta} dk.$$ But
$$\hat\mu((\Phi_\epsilon)_k)=\int_{\Gamma\backslash G}\int_M
\Phi_\epsilon(gmk)dmd\hat\mu(g).$$ Hence
$$\langle F_{(1\pm \epsilon)T}, \Phi_\epsilon\rangle\sim
\delta^{-1}c_{\phi_0}(T(1\pm\epsilon))^\delta\int_{M\backslash
K}\int_{g\in \Gamma\backslash G}\int_M
\Phi_\epsilon(gmk)dmd\hat\mu(g) ||v_0k||^{-\delta} dk$$
$$    =    \delta^{-1}c_{\phi_0}(T(1\pm\epsilon))^\delta\int_{\Gamma\backslash G}\int_{ K} \Phi_\epsilon(gk) ||v_0k||^{-\delta} dk
d\hat\mu(g).$$ For a fixed $k_0$, using Equation
(\ref{Roblin-Burger}) $$\int_{\Gamma\backslash G}\Phi_\epsilon(gk_0)
d\hat\mu(g)=\int_{\Gamma\backslash G}\Phi_\epsilon(gk_0) y^\delta
\frac{dg}{dk} d\nu_0=\int_{\Gamma\backslash
G}\phi_\epsilon(g)y^\delta \frac{dg}{dk}(k_0^{-1})^* d\nu_0.$$ But
if $\epsilon$ is small, $y$ runs, say from $1-\epsilon$ to
$1+\epsilon$. Since $\int \phi_\epsilon dg=1$,
$\int_{\Gamma\backslash G}\phi_\epsilon(g)y^\delta \frac{dg}{dk}
(k_0^{-1})^* d\nu_0$ is in the order of
$\int_{1-\epsilon}^{1+\epsilon} y^\delta=O(\epsilon)$.

Hence the above equality becomes
$$\langle F_{(1\pm \epsilon)T}, \Phi_\epsilon\rangle\sim     \delta^{-1}c_{\phi_0}(T(1\pm\epsilon))^\delta( \int_{ K}  ||v_0k||^{-\delta} dk + O(\epsilon)).     $$
In equation (\ref{squeeze}), let $\epsilon\ra 0$
to obtain
$$F_T(e)\sim \delta^{-1}c_{\phi_0}T^\delta \int_{ K} ||v_0k||^{-\delta}.$$

If $||\cdot||$ is $K$-invariant, take $U_\epsilon$ and
$\phi_\epsilon$ $K$-invariant, and we obtain
$$\langle F_{(1\pm \epsilon)T}, \Phi_\epsilon\rangle=\langle \Phi_\epsilon,\phi_0 \rangle
c_{\phi_0}\delta^{-1}(T(1\pm\epsilon))^\delta||v_0||^{-\delta}(1+O(T^{-\delta'})).$$
Since $U_\epsilon$ must contain $K$ and $dm$ is a probability
measure on $M\subset K$, we can take $\phi_\epsilon(e)=1$ to have
$\int \phi_\epsilon dg=1$. Hence $\langle \Phi_\epsilon,\phi_0
\rangle=\int_{g\in U_\epsilon} \phi_\epsilon(g)\phi_0(g)
dg=\phi_0(e)+R_\epsilon$ and $R_\epsilon\ra 0$ as $\epsilon\ra 0$.
So we obtain
$$F_T(e)=\phi_0(e)c_{\phi_0}\delta^{-1}(T)^\delta||v_0||^{-\delta}(1+O(T^{-\delta'})).$$
\end{proof}

Finally the orbital counting theorem can be stated as follows:
\begin{theorem}Let $\Gamma\subset G=KAN$ be a geometrically finite
group with critical exponent $\delta>\frac{D}{2}$ for real, complex
and quaternionic hyperbolic space, where $KAN$ is a fixed Iwasawa
decomposition introduced in section \ref{pre}.
Suppose $v_0$ is in the light cone such that $v_0\Gamma$ is
discrete, and the stabilizer of $v_0$ in $g_0^{-1}\Gamma g_0$ is in
$NM$.  Then for any norm $||\cdot||$ on $\F^{n,1}$
$$\#\{v\in v_0\Gamma:||v||<T\}\sim c_{\phi_0}\delta^{-1}T^\delta\int_{ K} ||v_0(g_0^{-1}kg_0)||^{-\delta}dk.$$  If $||\cdot||$ is
$g_0^{-1}Kg_0$-invariant, then
$$\#\{v\in
v_0\Gamma:||v||<T\}=c_{\phi_0}\delta^{-1}T^\delta||v_0||^{-\delta}(1+O(T^{-\delta'})).$$
\end{theorem}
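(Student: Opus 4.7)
The plan is to reduce the general statement to the preceding counting theorem (proved just above in Section~\ref{counting}, where the $G$-stabilizer of $v_0$ is assumed to be $NM$ so that $\mathrm{Stab}_\Gamma(v_0) = \Gamma \cap NM$) by conjugating $\Gamma$ by $g_0$. Set $\Gamma' := g_0^{-1}\Gamma g_0$; this is again a geometrically finite discrete subgroup of $G$ with the same critical exponent $\delta > D/2$. Conjugation by an element of $G$ preserves every invariant entering the analysis---the critical exponent, the Hausdorff dimension $D$, the $L^2$-normalized bottom eigenfunction $\phi_0$ (transported along the conjugation), and the leading constant $c_{\phi_0}$---so the preceding theorem applied to the conjugated data yields a counting asymptotic with the same constants.

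First I would apply the preceding theorem to $\Gamma'$ with reference vector $v_0$. The hypothesis $\mathrm{Stab}_{\Gamma'}(v_0) \subset NM$, combined with the standing fact that $NM$ stabilizes the canonical light-cone vector in the chosen horospherical coordinates, ensures that the coset sum $(\Gamma' \cap NM)\backslash \Gamma'$ defining the counting function $F_T$ parametrizes the orbit $v_0 \Gamma'$ bijectively. The preceding theorem thus yields
$$\#\{v \in v_0\Gamma' : ||v|| < T\} \sim c_{\phi_0}\delta^{-1} T^\delta \int_K ||v_0 k||^{-\delta}\, dk,$$
together with the strengthened statement carrying the explicit error $O(T^{-\delta'})$ from Theorem~\ref{equidistribution} in the $K$-invariant norm case.

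Second I would transfer from the $\Gamma'$-orbit to the original $\Gamma$-orbit. The bijection $\gamma \leftrightarrow \gamma' := g_0^{-1}\gamma g_0$ from $\Gamma$ to $\Gamma'$ yields $v_0\gamma = v_0 g_0 \gamma' g_0^{-1}$, so counting $v \in v_0\Gamma$ with $||v|| < T$ is equivalent to counting $w \in (v_0 g_0)\Gamma'$ under the $g_0$-twisted norm $w \mapsto ||w g_0^{-1}||$. A cleaner way to package this is to replay the entire argument of Section~\ref{counting} verbatim with respect to the conjugated Iwasawa decomposition $K^\sharp A^\sharp N^\sharp := g_0 K g_0^{-1} \cdot g_0 A g_0^{-1} \cdot g_0 N g_0^{-1}$: then $v_0$ is in standard position with $\mathrm{Stab}_G(v_0) \supset N^\sharp M^\sharp$, the sum indexed by $(\Gamma \cap N^\sharp M^\sharp)\backslash \Gamma$ parametrizes $v_0\Gamma$, and the $K^\sharp$-average $\int_{K^\sharp} ||v_0 k^\sharp||^{-\delta}\, dk^\sharp$ appearing in the conclusion transforms, under the pushforward of normalized Haar measure along the isomorphism $k \mapsto g_0^{-1} k g_0$, into exactly $\int_K ||v_0(g_0^{-1} k g_0)||^{-\delta}\, dk$. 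When $||\cdot||$ is $g_0^{-1} K g_0$-invariant (i.e.\ $K^\sharp$-invariant) this average collapses to $||v_0||^{-\delta}$ and the inherited error is $O(T^{-\delta'})$.

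The main obstacle is the simultaneous bookkeeping of the $g_0$-twist on the reference vector, the norm, the Iwasawa decomposition, and the stabilizer condition: one has to check that the hypothesis $\mathrm{Stab}_{\Gamma'}(v_0) \subset NM$ is precisely the translation, under the $g_0$-conjugation, of the stabilizer condition used in the preceding theorem, so that the cosets $(\Gamma \cap N^\sharp M^\sharp)\backslash \Gamma$ really parametrize $v_0\Gamma$ without over- or under-counting. No new analytic input is needed---the entire analytic content (equidistribution of expanding horospheres, spectral-gap estimates for matrix coefficients, horospherical averages of $\phi_0$) has already been absorbed into Theorem~\ref{equidistribution} and the preceding counting theorem.
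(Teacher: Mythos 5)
Your overall strategy — conjugate by $g_0$ and reduce to the previously established counting theorem — is exactly what the paper does implicitly (the preceding theorem is proved after the reduction ``by conjugating $\Gamma$ by $g_0$, we may assume that the stabilizer of $v_0$ in $G$ is $NM$,'' and the present statement simply records the result with that normalization undone), so the route is the right one. However, the first paragraph has a genuine slip: you propose to ``apply the preceding theorem to $\Gamma'=g_0^{-1}\Gamma g_0$ with reference vector $v_0$'' and deduce $\#\{v\in v_0\Gamma':\|v\|<T\}\sim c_{\phi_0}\delta^{-1}T^\delta\int_K\|v_0k\|^{-\delta}dk$. But the preceding theorem is not applicable with reference vector $v_0$, because it relies on the identity $v_0a_y=y^{-1}v_0$ and on $v_0NM=v_0$, which hold only for the standard-position light-cone vector; the $G$-stabilizer of $v_0$ is not $NM$ — that is the very reason $g_0$ is introduced. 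One must apply the preceding theorem to $(\Gamma',\ v_0g_0)$ (or its $g_0^{-1}$-translate, depending on convention) together with the norm $w\mapsto\|wg_0^{-1}\|$, and only then transfer back; your first-paragraph formula skips this twist and so is not what the preceding theorem gives.

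Your second, ``cleaner'' paragraph — replaying the Section~\ref{counting} argument in the conjugated Iwasawa decomposition — is the correct way to make the reduction precise and in fact repairs the first paragraph, but it contains an internal sign inconsistency: you set $K^\sharp:=g_0Kg_0^{-1}$ yet then invoke the isomorphism $k\mapsto g_0^{-1}kg_0$ to identify $\int_{K^\sharp}\|v_0k^\sharp\|^{-\delta}dk^\sharp$ with $\int_K\|v_0(g_0^{-1}kg_0)\|^{-\delta}dk$. Those two are incompatible: with $K^\sharp=g_0Kg_0^{-1}$ the change of variables gives $\int_K\|v_0g_0kg_0^{-1}\|^{-\delta}dk$, whereas matching the stated conclusion (and the stated $g_0^{-1}Kg_0$-invariance hypothesis for $\|\cdot\|$) requires $K^\sharp=g_0^{-1}Kg_0$, $N^\sharp M^\sharp=g_0^{-1}NMg_0\subset\mathrm{Stab}_G(v_0)$. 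Once you fix the direction of conjugation consistently so that $N^\sharp M^\sharp$ really is contained in $\mathrm{Stab}_G(v_0)$ and the hypothesis $\mathrm{Stab}_{g_0^{-1}\Gamma g_0}(v_0)\subset NM$ translates into $\mathrm{Stab}_\Gamma(v_0)=\Gamma\cap N^\sharp M^\sharp$, the bookkeeping closes and no further analytic input is needed, exactly as you say.
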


\section{application to Apollonian packing}\label{application}
In euclidean $n$-space, the maximum number of mutually tangent
$n-1$-spheres is $n+2$, if one demands that no three spheres have a
common point. An Apollonian sphere packing of $n$-dimensional
euclidean space is for a given $n+1$ mutually tangent spheres, one
adds a  sphere which is tangent to all the $n+1$ previous spheres.
This procedure defines a sphere packing, called Apollonian sphere
packing.

The formula for curvatures for such an $n+2$ mutually tangent
spheres is  \cite{cox, BD}
$$n\sum_{i=1}^{n+2}\kappa_i^2=(\sum_{i=1}^{n+2}\kappa_i)^2.$$
For a given $n+1$ mutually tangent spheres, there are exactly two
spheres satisfying the above equation. The sum of curvatures of
these two last spheres is
$$\kappa_{n+2}+\kappa_{n+2}'=\frac{2}{n-1}\sum_{i=1}^{n+1}\kappa_i.$$
Hence for $n\leq 3$, if the first $n+2$ generating spheres have
integral curvatures, then the curvature $\kappa'_{n+2}$ of the next
sphere is also integral.  So we restrict our attention to integral
Apollonian packings.

Let
$$Q(\kappa_1,\cdots,\kappa_{n+2})=n\sum_{i=1}^{n+2}\kappa_i^2-(\sum_{i=1}^{n+2}\kappa_i)^2$$
be a quadratic form on $\bR^{n+2}$. This has signature $(n+1,1)$.
Hence the orthogonal group $O_Q$ can be identified with the group of
hyperbolic isometry in $H^{n+1}_\bR$, $O(n+1,1)$.

From now on we focus on $n=3$, hence
$$\kappa_{5}+\kappa_{5}'=\sum_{i=1}^{4}\kappa_i.$$

There is an integral group, called Apollonian group, defined as
follows. Changing the curvature
$(\kappa_1,\cdots,\kappa_4,\kappa_5)$ to
$(\kappa_1,\cdots,\kappa_4,\kappa_5')$ corresponds to the integral
matrix
\[S_5=\left(\begin{matrix}
   1 & 0 & 0 & 0 & 0 \\
   0 & 1 & 0 & 0 & 0 \\
   0 & 0 & 1 & 0 & 0 \\
   0 & 0 & 0 & 1 & 0 \\
   1 & 1 & 1 & 1 & -1 \end{matrix}\right)\]
Similarly there exist corresponding matrices $S_1,S_2,S_3$ and
$S_4$. Denote $\cal A=\langle S_1,S_2,S_3,S_4,S_5 \rangle\subset
O_Q(\mathbb Z)$, the Apollonian group. This group acts on apollonius
sphere packing $\cal P$ generated by the initial spheres with
curvature $\kappa_1,\cdots,\kappa_5$. We will see shortly that this
Kleinian group is geometrically finite.

If $S^1,\cdots,S^5$ are initial spheres, then next sphere ${S^5}'$
is obtained by a reflection along the sphere $T^5$ which passes
through the intersection points of $S^1,S^2,S^3,S^4$.  So the
element $S_5$ corresponds to a reflection $R_5$ along $T^5$. It is
similar for other elements $S_i$. This way $\cal A$ corresponds to a
group $\Gamma$ generated by reflections $R_i$.

Let $D_i$ be a hemisphere in upper half space of $\bR^4$, whose
boundary is $T^i$. Since $\bR^3$ is the ideal boundary of $H^4_\bR$
and the reflections in $\bR^3$ generates whole isometry group of
$H^4_\bR$, we know that $\Gamma$ is a discrete subgroup of
$Iso(H^4_\bR)$ generated by reflections along $D_i$  with a
fundamental domain bounded by $D_i$. Since the fundamental domain
has finite sides, it is a geometrically finite group with cusps.

Note that $\Gamma$ acts on $\cal P$ by permuting spheres in $\cal
P$, so it leaves invariant the set of tangent points between
spheres. But it is easy to see that any point on some sphere can be
approximated by tangent points, hence the limit set of $\Gamma$ is
equal to the union of spheres in $\cal P$. Since a sphere has
Hausdorff dimension $2$, the Hausdorff dimension of the limit set
$\Lambda_\Gamma>2>3/2$. Hence our assumption that the critical
exponent is greater than $D/2$ is satisfied to apply previous
results to this section.

\subsection{Orbital counting for $\cal A$}
We use a $K$-invariant norm $||\ ||$ in this section, in fact a
maximum norm,  and $G=SO^0(4,1)$. As before
$B_T=\{v\in\bR^5:Q(v)=0,\ ||v||<T\}$, i.e., the set of curvatures of
maximal mutually tangent spheres in $\bR^3$ whose norm is less than
$T$. Fix an initial curvatures $\xi$ so that $Q(\xi)=0$ and the
corresponding spheres generate an Apollonian sphere packing in
$\bR^3$. By conjugation if necessary, we will assume that the
stabilizer of $\xi$ is equal to $NM$. One has to recall the
definitions and notations from section \ref{counting}. Let
$\phi_\epsilon\in C^\infty_c(H^4_\bR)=C^\infty_c(SO^0(4,1))^K$ be  a
nonnegative function supported on a $K$-invariant $U_\epsilon$ with
$\int_{H^4_\bR} \phi_\epsilon=1$. Define a function defined on
$M=\Gamma\backslash H^4_\bR=\Gamma\backslash G/K$ by
$$\Phi_\epsilon(\Gamma g)=\sum_{\gamma\in
\Gamma}\phi_\epsilon(\gamma g).$$   Fix a primitive integral
polynomial $f$ in 5 variables. One defines a positive sequence
$A(T)=\{a_n(T)\}$ where
$$a_n(T)=\sum_{\gamma\in{\text {Stab}}_\Gamma(\xi)\backslash \Gamma, f(\xi\gamma)=n}w_T(\gamma)$$ where
$w_T(\gamma)=\int_{G/K}\chi_{B_T}(\xi\gamma
g)\phi_\epsilon(g)d\mu(g)$ for each $\gamma\in\Gamma$. A subsequence
$A_d(T)=\{a_n(T)|n=0(\text{mod}\ d)\}$ and define $|A(T)|=\sum
a_n(T)$, $|A_d(T)|=\sum_{n=0(\text{mod}\ d)}a_n(T)$.

For any subgroup $\Gamma_0\subset \Gamma$ with $\text
{Stab}_\Gamma(\xi)=\text{Stab}_{\Gamma_0}(\xi)$, define
$$F^{\Gamma_0}_T(g)=\sum_{\gamma\in{\text {Stab}}_\Gamma(\xi)\backslash \Gamma_0}\chi_{B_T}(\xi\gamma g),$$ and
$$\Phi^{\Gamma_0}_{\epsilon,\gamma_1}(g)=\sum_{\gamma\in\Gamma_0}\phi_\epsilon(\gamma_1^{-1}\gamma
g),\ \gamma_1\in\Gamma,$$ which is an $\epsilon$-approximation to
the identity around $[\gamma_1^{-1}]$ in $\Gamma_0\backslash
H^4_\bR$. Finally let
$$\Gamma_\xi(d)=\{\gamma\in\Gamma: \xi\gamma=\xi(\text{mod}\ d)\},$$
and its finite index subgroup
$$\Gamma(d)=\{\gamma\in\Gamma: \gamma=I(\text{mod}\ d)\}.$$

Then $\text{Stab}_{\Gamma}(\xi)=\text{Stab}_{\Gamma_\xi(d)}(\xi)$.
The following is due to Bourgain-Gamburd-Sarnak \cite{s}, Salehi
Golsefidy-Varj\'u \cite{VS}, Breuillard-Green-Tao \cite{BGT} and
Pyber-Szabo \cite{PS}.
\begin{theorem}For any square free integer $d$, the $L^2$-spectrum of
$\Gamma(d)\backslash H^4_\bR$ has a uniform spectral gap $2\leq
\theta<\delta_\Gamma$ so that it does not have any spectrum in
$[\theta(3-\theta),\delta_\Gamma(3-\delta_\Gamma))$.
\end{theorem}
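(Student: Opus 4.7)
The plan is to obtain the uniform spectral gap by transferring a combinatorial expander estimate on the congruence quotients $\Gamma/\Gamma(d)$ to a geometric spectral gap on $\Gamma(d)\backslash H^4_{\bR}$. First I would record the structural input: since $\Gamma$ is a Zariski dense subgroup of $G=SO^{0}(4,1)$ generated by the finitely many reflections $R_{i}$ with entries in $\bz$, strong approximation (in the form appropriate to simply connected arithmetic groups, via Weisfeiler or Nori) identifies the reduction map
$$\Gamma/\Gamma(d)\ \lra\ \overline{\Gamma}(\bz/d\bz)\sub G(\bz/d\bz)$$
with the full image of $\Gamma$ in $G(\bz/d\bz)$, and by the Chinese Remainder Theorem for square free $d=p_{1}\cdots p_{k}$ this splits as a product over primes. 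Thus both the combinatorial and the spectral problem reduce to a uniform statement over single primes $p$.

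Next I would decompose $L^{2}(\Gamma(d)\backslash G)^{K}$ under the regular representation of the finite group $F_{d}=\Gamma/\Gamma(d)$. The $F_{d}$-invariant part is canonically $L^{2}(\Gamma\backslash G)^{K}$, where the isolation of the base eigenvalue $\delta_{\Gamma}(3-\delta_{\Gamma})$ in a gap $[\theta_{0}(3-\theta_{0}),\delta_{\Gamma}(3-\delta_{\Gamma}))$ is supplied by the discreteness of the small spectrum on the base geometrically finite manifold (Hamenst\"adt). The content of the theorem is therefore the assertion that every non-trivial isotypic component carries no spectrum below $\theta_{0}(3-\theta_{0})$, uniformly in $d$ and in the irreducible representation of $F_{d}$.

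The mechanism for that uniform lower bound is a Burger/Brooks/Sarnak-Xue style transfer principle. Fix the symmetric generating set $S=\{R_{1},\ldots,R_{5}\}$ of $\Gamma$; let $A_{d}$ be the averaging operator $A_{d}\psi(x)=|S|^{-1}\sum_{s\in S}\psi(sx)$ acting on $L^{2}(\Gamma(d)\backslash G)^{K}\ominus L^{2}(\Gamma\backslash G)^{K}$. A classical ping pong between this combinatorial operator and the Laplacian $-\triangle$ on the congruence cover (together with a fixed smooth cutoff in a fundamental domain of $\Gamma$) shows that a uniform spectral gap $\|A_{d}\|\le 1-\kappa$ on the non-trivial part forces a uniform lower bound $\lambda_{1}(\Gamma(d)\backslash H^{4}_{\bR})\ge \lambda_{1}(\Gamma\backslash H^{4}_{\bR})+c\kappa$ on the appropriate orthogonal complement; equivalently, the base eigenvalue remains isolated by a gap $\theta(3-\theta)-\delta_{\Gamma}(3-\delta_{\Gamma})$ independent of $d$. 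The uniform operator bound on $A_{d}$ is the Cayley graph expansion
$$\sup_{d\ \text{square free}}\ \|A_{d}\big|_{\ell^{2}_{0}(F_{d})}\|\ <\ 1,$$
which is exactly the super strong approximation theorem for Zariski dense subgroups of semisimple algebraic groups.

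The hard step, and the main obstacle, is that final expansion statement, which is where the four cited works enter. One reduces via CRT to prime moduli $p$, and for each prime one runs the Bourgain-Gamburd machine on $F_{p}$: $\ell^{2}$-flattening for the convolution powers of the uniform measure on $S$, a non-commutative Balog-Szemer\'edi-Gowers lemma to produce an approximate subgroup from failure of flattening, and finally the product theorem for finite simple groups of Lie type of bounded rank (Breuillard-Green-Tao and Pyber-Szab\'o) to contradict the existence of a large approximate subgroup in $\overline{\Gamma}(\bz/p\bz)$. Salehi Golsefidy-Varj\'u extend this from prime to square free moduli under the Zariski density hypothesis, which holds in our situation since $\Gamma$ is Zariski dense in $SO(4,1)$. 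Granting this input, the ping pong transfer of the previous paragraph yields the desired uniform constant $\theta$, completing the proof.
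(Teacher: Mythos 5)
Your proposal follows essentially the same route as the paper: strong approximation and CRT to reduce to prime moduli, the Salehi Golsefidy--Varj\'u super strong approximation theorem (resting on Bourgain--Gamburd flattening plus the Breuillard--Green--Tao/Pyber--Szab\'o product theorem) to get the uniform Cayley graph expansion, and then a Bourgain--Gamburd--Sarnak style transfer realizing $L^{2}(\Gamma(d)\backslash H^{4}_{\bR})$ as sections of a flat bundle over $\Gamma\backslash H^{4}_{\bR}$ and decomposing under the regular representation of $\Gamma/\Gamma(d)$. One small correction: the transfer does not give $\lambda_{1}(\Gamma(d)\backslash H^{4}_{\bR})\geq \lambda_{1}(\Gamma\backslash H^{4}_{\bR})+c\kappa$; since the base spectrum injects into the cover spectrum the correct conclusion is $\lambda_{1}(\Gamma(d)\backslash H^{4}_{\bR})\geq\min\bigl(\lambda_{1}(\Gamma\backslash H^{4}_{\bR}),\,\lambda_{0}(\Gamma\backslash H^{4}_{\bR})+c\kappa\bigr)$, with the $\lambda_{0}+c\kappa$ bound holding on the complement of the $\Gamma/\Gamma(d)$-invariants --- and in the infinite-volume setting the paper's transfer step requires the $\phi_{0}^{2}$-weighted Rayleigh quotient argument (writing $F=\phi_{0}u$ and controlling $\int\phi_{0}^{2}\|\nabla u\|^{2}$ near a fixed compact tube), since a naive smooth cutoff does not suffice when the fundamental domain is noncompact and $\phi_{0}$ decays.
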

\begin{proof}({\bf Step I; Family of expanders}) For general Apollonian sphere packing in
$\bR^n$, $$\Gamma=\cal A=\langle S_1,\cdots,S_{n+2} \rangle\subset
G=O_Q(\mathbb Z[\frac{1}{n-1}])\subset GL(n+2,\mathbb
Z[\frac{1}{n-1}]).$$ Note that $\Gamma$ is not discrete if $n\geq
4$. Then the following is proved by A. Salehi Golsefidy and P.
Varj\'u \cite{VS}.
\begin{thA} Let $\Gamma \subset GL(d, (\mathbb Z[\frac{1}{q_0}])$ be
a group generated by a symmetric set $S$. Then
$Cayley(\Gamma/\Gamma(q);\ S\ mod\ q)$ form a family of expanders
when $q$ ranges over square-free integers coprime to $q_0$ if and
only if the connected component of the Zariski-closure of $\Gamma$
is perfect.
\end{thA}

({\bf Step II; Uniform spectral gap}) This is a corollary of the
above theorem. For reader's convenience, following closely the
argument in \cite{s}, we give the details of its extension to higher
dimensional real hyperbolic spaces. Suppose $\Gamma$ is a
geometrically finite discrete group in $G=O(n,1)$. For large enough
$q$, by \cite{MVW} $\Gamma/\Gamma(q)=G(\F_q)$  where $\F_p=\Z/p\Z$.

Fix a fundamental domain $\cal F\subset H^{n+1}_\br$ of $\Gamma$.
For any map $f$ defined on $\Gamma(q)\backslash H^{n+1}_\br$, let
$\tilde f$ be a lift to $H^{n+1}_\br$. Then $f$ can be regarded as a
vector valued function $F$ defined on $\Gamma\backslash H^{n+1}_\br$
by setting
$$F(z)=(\tilde f (\gamma z))_{\gamma\in \Gamma/\Gamma(q)}$$ where $z\in \cal F$ by identifying $\Gamma\backslash H^{n+1}_\br$
with $\cal F$. It satisfies the equivariant condition
$$F(\gamma z)=R(\gamma)F(z)$$ for $\gamma\in G(\F_q)$, where
$R(\gamma)$ denotes the right regular representation of $G(\F_q)$.

More formally, every $L^2$ function on $\Gamma(q)\backslash
H^{n+1}_\br$ can be viewed as an $L^2$ section of a flat bundle $E$
of rank $|G(\F_q)|$ on $\Gamma\backslash H^{n+1}_\br$ as follows:
 Given $f\in
L^2(\Gamma(q)\backslash H^{n+1}_\br,\br)$, define $F:H^{n+1}_\br\ra
L^2(G(\F_q))$ by $$F(x,g)=\tilde f(gx)$$ for  $ x\in\cal F,
g\in\Gamma$. Then, for $h\in \Gamma$, $$F(hx,g)=\tilde f(ghx)=(R(h\
\text {mod} \Gamma(q))F)(x,g),$$ so $F$ can be viewed as a section
of the flat vector bundle $E$ over $\Gamma\backslash H^{n+1}_\br$
associated to the $\Gamma$-principal bundle $H^{n+1}_\br \ra
\Gamma\backslash H^{n+1}_\br$ via the right regular representation
of $G(\F_q)$ composed with $\Gamma \ra \Gamma/\Gamma(q)=G(\F_q)$.
Or, equivalently, $E$ is associated to the $G(\F_q)$-principal
bundle $\Gamma(q)\backslash H^{n+1}_\br \ra \Gamma\backslash
H^{n+1}_\br$ via the right regular representation $R$ of $G(\F_q)$.

Then $E=E_0 + E_1$ according to the splitting of the right regular
representation into constant functions, and functions on $G(\F_q)$
which are orthogonal to constant functions. This splitting is
orthogonal and parallel (compatible with the flat connection). The
splitting of $F=F_0 +F_1$ corresponds to $f=f_0 +f_1$ where $f_0$ is
$G(\F_q)$-invariant.

If $f$ is orthogonal to the $\lambda_0$ eigenspace of
$\Gamma(q)\backslash H^{n+1}_\br$, which is generated by a
$G(\F_q)$-invariant function, then each factor of $f$ is orthogonal
to the $\lambda_0$ eigenspace of $\Gamma\backslash H^{n+1}_\br$.
Integrating each factor over $\Gamma\backslash H^{n+1}_\br$ and
taking summation, we get $|| \nabla f ||_2^2 \geq \lambda_1
(\Gamma\backslash H^{n+1}_\br) || f ||_2^2$. Here $||\cdot||_2$
denotes the $L^2$-norm over $\Gamma(q)\backslash H^{n+1}_\br$.

Hence from now on, we can assume that $f=f_1$, $F=F_1$ takes values
in $E_1$.

 Pick a generating system $S$ of $\Gamma$. Construct corresponding right
Cayley graph. By definition of the spectral gap for the discrete
Laplacian, for each $z\in \Gamma\backslash H^{n+1}_\br$,
$$\frac{1}{|S|} \sum_{s\in S}||F(z) -R(s)F(z)||^2 \geq
\lambda_1(G(\F_q))||F(z)||^2.$$ Integrate over $\Gamma\backslash
H^{n+1}_\br$ and get
$$\frac{1}{|S|} \sum_{s\in S}|| F-R(s)F ||_2^2 \geq
\lambda_1(G(\F_q))|| F ||_2^2.$$

Let $H_0$ be the subspace of such functions orthogonal to the bottom
eigenfunction $\phi_0$. We need to show that there exists
$\epsilon>0$ independent of $q$ such that for any $F$ in $H_0$
$$\frac{\int_\cal F ||\nabla F||^2 d\mu}{\int_\cal F||F||^2d\mu}\geq \lambda_0 +\epsilon.$$
Above discussion about expanders implies that for $z\in\cal F$, and
for any $F\in H_0$, there exists $\gamma\in S$ such that
\begin{eqnarray}\label{eq}
||F(\gamma z)-F(z)||^2\geq \lambda_1(G(\F_q)) ||F(z)||^2,\  \text {or}\\
||R(\gamma)F-F||_2^2\geq \lambda_1(G(\F_q)) ||F||_2^2.\end{eqnarray}
Set  $f=||F||=a\phi_0(z)+b(z)$ where $b$ is orthogonal to $\phi_0$,
the $\lambda_0$ eigenfunction normalized that $\int_\cal
F\phi_0^2=1$, and $L^2$ norm of $F$, $a^2+\int_\cal F b(z)^2d\mu$ is
1. Then one can show that
$$\frac{\int_\cal F ||\nabla F||^2 d\mu}{\int_\cal F||F||^2d\mu}\geq
\lambda_0+(\lambda_1-\lambda_0)\int_\cal F b^2 d\mu.$$ If $\int_\cal
F b^2 d\mu$ is bounded below for all $F\in H_0$ we are done. Hence
suppose there is no lower bound. Then after taking a weak limit, we
may assume that there is $F\in H_0$ with $\int_\cal F b^2 d\mu=0$
and $a=1$. One can write
$$F=(F_i),\  \frac{F_i}{\phi_0}=u_i$$ so that $\sum u_i^2=1$, which implies that $\sum u_j\frac{\partial u_j}{\partial x_i}=0$.
Set $u=(u_i)$. Then Equation (\ref{eq}) becomes
\begin{eqnarray}\label{eq1}
 ||R(\gamma)u(z)-u(z)||^2 > \lambda_1(G(\F_q))
\end{eqnarray} for any $z\in \cal F$ and  some $\gamma\in S$. By
Step I, $\lambda_1(G(\F_q))$ is uniformly bounded below independent
of $q$. A direct calculation as in \cite{s} shows that
$$||\nabla(\phi_0u)||^2= \sum_j|\nabla(\phi_0 u_j)|^2$$
$$=\sum_{j,i}(\frac{\partial(\phi_0u_j)}{\partial x_i})^2=
\sum_{i,j}(u_j\frac{\partial \phi_0}{\partial
x_i}+\phi_0\frac{\partial u_j}{\partial x_i})^2=\phi_0^2||\nabla
u||^2+||\nabla \phi_0||^2,$$ which implies that
$$\frac{\int_\cal F ||\nabla F||^2 d\mu}{\int_\cal F||F||^2d\mu}=\frac{\int_\cal F ||\nabla \phi_0||^2+\phi_0^2||\nabla u||^2d\mu}{\int_\cal F |\phi_0|^2d\mu}\geq
\lambda_0+\frac{\int_\cal F \phi_0^2||\nabla u||^2d\mu}{\int_\cal F
\phi_0^2d\mu}.$$  If $\frac{\int_\cal F \phi_0^2||\nabla
u||^2d\mu}{\int_\cal F \phi_0^2d\mu}$ is bounded below, we are done
again. Remember that we normalize $\phi_0$ so that $\int_\cal F
\phi_0^2 =1$.

Now $u=(u_1,\cdots,u_k)$ for $k=|G(\F_q)|$ and hence $\nabla
u=(\nabla u_1,\cdots,\nabla u_k)$. Fix $z_0\in\cal F$. Take a unit
speed geodesic $\alpha(t)$ connecting $z_0$ and $\gamma z_0$. Then
$$ u(\gamma z_0)-u(z_0)=\int_\alpha \frac{du(\alpha(t))}{dt}
dt=\int_\alpha \nabla u(\alpha(t)) \alpha'(t)dt.$$ Hence
$$ ||u(\gamma z_0)-u(z_0)||^2=(\int_\alpha \frac{du(\alpha(t))}{dt}
dt)^2\leq d(z_0,\gamma z_0)\int_\alpha ||\nabla u( \alpha(t))\cdot
\alpha'(t)||^2 dt$$$$\leq d(z_0,\gamma z_0)\int_\alpha ||\nabla
u(\alpha(t))||^2 dt
$$
Take a Fermi coordinate along $\alpha$ so that metric tensor is
$$ ds^2= d\rho^2 + \sinh^2\rho d\phi^2+ \cosh^2\rho dt^2$$ where $\rho$ is a
distance from $\alpha$ so that the volume form
$d\mu=\cosh\rho\sinh^{n-1}\rho dt d\rho d\phi$. Take a small cross
section $B$ orthogonal to $\alpha$ passing through $z_0$ so that the
induced volume form on $B$ can be written as $dB=\sinh^{n-1}\rho
d\rho d\phi$. Then $N=B\times \alpha$ will be a small tubular
neighborhood around $\alpha$ so that
$$L=\max \{d(z,\gamma z): z\in B\}<\infty,\ m=\min_N \phi_0^2>0.$$
Since $\phi_0$ is a $\Gamma$-invariant positive function, $m>0$, see
section \ref{laplace}. By integrating above inequality over $B$,
$$\int_B ||u(\gamma z)-u(z)||^2 dB
\leq L \int_N ||\nabla u||^2 dtdB \leq L \int_N ||\nabla
u||^2\cosh\rho dtdB$$$$= L \int_N ||\nabla u||^2 d\mu\leq
\frac{L}{m}\int_N \phi_0^2||\nabla u||^2 d\mu\leq
\frac{L}{m}\int_\cal F \phi_0^2||\nabla u||^2 d\mu.$$ By equation
(\ref{eq1}), $\int_\cal F \phi_0^2||\nabla u||^2 d\mu$ has a uniform
lower bound.
\end{proof}

Since $\Gamma(d)$ is a finite index subgroup of $\Gamma_\xi(d)$,
such a spectral gap theorem holds for $\Gamma_\xi(d)$ as well. From
this and Lemma \ref{est} we obtain
\begin{proposition}\label{uniform}There exists $\epsilon_0=\delta'$ uniform over all square
free integer $d$ so that for any $\gamma_1\in\Gamma$ and for any
$\Gamma_\xi(d)$, we have
$$\langle F^{\Gamma_\xi(d)}_T,
\Phi^{\Gamma_\xi(d)}_{\epsilon,\gamma_1}\rangle_{L^2(\Gamma_\xi(d)\backslash
H^4_\bR)}=\frac{c_{\phi_0}d_\epsilon}{\delta_\Gamma
[\Gamma:\Gamma_\xi(d)]}||\xi||^{-\delta_\Gamma}T^{\delta_\Gamma}(1+O(T^{-\epsilon_0})).$$
\end{proposition}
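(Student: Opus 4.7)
The plan is to apply the orbital counting machinery of Lemma \ref{est} and the preceding theorem, but with the group $\Gamma$ replaced by the congruence subgroup $\Gamma_\xi(d)$, keeping careful track of how every quantity scales with the index $[\Gamma:\Gamma_\xi(d)]$ and, more importantly, ensuring that the error exponent does not depend on $d$. Since $\text{Stab}_\Gamma(\xi)=\text{Stab}_{\Gamma_\xi(d)}(\xi)$ by definition, the entire KAN-decomposition argument preceding equation (\ref{ave}) goes through verbatim with $\Gamma_\xi(d)$ in place of $\Gamma$, so for any $K$-invariant $\psi\in C_c^\infty(\Gamma_\xi(d)\backslash G)^K$ and the $K$-invariant norm $\|\cdot\|$ we obtain the analogue of Lemma \ref{est}:
$$\langle F^{\Gamma_\xi(d)}_T,\psi\rangle_{L^2(\Gamma_\xi(d)\backslash H^4)}=\langle\psi,\phi_0^{(d)}\rangle\,\delta_\Gamma^{-1}c_{\phi_0^{(d)}}\,T^{\delta_\Gamma}\|\xi\|^{-\delta_\Gamma}(1+O(T^{-\delta'(d)})),$$
where $\phi_0^{(d)}$ denotes the unit $L^2$-norm bottom eigenfunction on $\Gamma_\xi(d)\backslash H^4$.

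Next I would plug in the two key identifications. By the observation at the end of Section \ref{bottom} applied to the finite index subgroup $\Gamma_\xi(d)\subset \Gamma$, the bottom eigenfunction is
$$\phi_0^{(d)}=\frac{1}{\sqrt{[\Gamma:\Gamma_\xi(d)]}}\,\tilde\phi_0,\qquad c_{\phi_0^{(d)}}=\frac{c_{\phi_0}}{\sqrt{[\Gamma:\Gamma_\xi(d)]}}.$$
For the inner product with the test function I unfold:
$$\langle\Phi^{\Gamma_\xi(d)}_{\epsilon,\gamma_1},\phi_0^{(d)}\rangle_{\Gamma_\xi(d)\backslash G}=\int_G\phi_\epsilon(\gamma_1^{-1}g)\phi_0^{(d)}(g)\,dg=\frac{1}{\sqrt{[\Gamma:\Gamma_\xi(d)]}}\int_G\phi_\epsilon(g')\phi_0(\gamma_1 g')\,dg',$$
using $\Gamma$-invariance of $\tilde\phi_0$ and the change of variable $g'=\gamma_1^{-1}g$. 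The latter integral depends only on $\phi_\epsilon$ and on the value of $\phi_0$ on $\text{supp}(\phi_\epsilon)$; abbreviate it by $d_\epsilon$. Multiplying the two factors $c_{\phi_0^{(d)}}$ and $\langle\Phi^{\Gamma_\xi(d)}_{\epsilon,\gamma_1},\phi_0^{(d)}\rangle$ produces the overall coefficient $c_{\phi_0}d_\epsilon/[\Gamma:\Gamma_\xi(d)]$, exactly as in the target formula.

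The last and crucial step is to argue that $\delta'(d)$ can be chosen independently of $d$, that is, $\delta'(d)=\epsilon_0$ for a single $\epsilon_0>0$. Going back to Theorem \ref{equidistribution}, the error exponent was $\delta'=(\delta-s_\Gamma)/(p+1)$ where $s_\Gamma$ is any number strictly between $D/2$ and $\delta$ such that the $L^2$-spectrum of $\Gamma\backslash H^4$ on $[s_\Gamma(3-s_\Gamma),\delta(3-\delta))$ is empty; once the spectral gap is fixed, $\delta'$ depends only on that gap and on the structural exponent $p$ coming from Sobolev norms of the cut-off $\rho_\epsilon$, both of which are independent of $d$. The spectral-gap theorem of Bourgain--Gamburd--Sarnak / Salehi Golsefidy--Varj\'u / Breuillard--Green--Tao / Pyber--Szabo, in the form proved just above, produces a \emph{uniform} $\theta<\delta_\Gamma$ valid for every squarefree $d$, and since $\Gamma(d)\subset\Gamma_\xi(d)\subset\Gamma$, the same $\theta$ bounds the spectrum of $\Gamma_\xi(d)\backslash H^4$ away from $\delta_\Gamma(3-\delta_\Gamma)$. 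Plugging this uniform $\theta$ in place of $s_\Gamma$ in the proof of Theorem \ref{equidistribution} yields a single $\epsilon_0=(\delta_\Gamma-\theta)/(p+1)$ that works across all $\Gamma_\xi(d)$.

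The principal obstacle is verifying this last uniformity rather than the bookkeeping of the main term: one must check that the constants implicit in Proposition \ref{tempered} and Corollary \ref{product}, when transported to $\Gamma_\xi(d)\backslash G$, depend on the test function only through fixed-radius Sobolev norms of $\phi_\epsilon$ (which lift without change to any cover), and that the geometric input in Propositions \ref{basic} and \ref{another} — namely $\phi_0^N$ and the cut-off $\eta$ supported in a fundamental set — can be chosen from a lift of the $\Gamma$-data, so that neither the implied constants nor the exponent $p$ degrade with $d$. Once this is done, the formula in the statement follows by assembling the three ingredients above.
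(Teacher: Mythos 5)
Your proposal is correct and follows essentially the same route as the paper: apply the orbital-counting Lemma~\ref{est} to the finite-index subgroup $\Gamma_\xi(d)$, use $\delta_{\Gamma_\xi(d)}=\delta_\Gamma$, rescale $\phi_0^{(d)}$ and $c_{\phi_0^{(d)}}$ by $[\Gamma:\Gamma_\xi(d)]^{-1/2}$ as recorded in Section~\ref{digression}, identify $\langle\Phi^{\Gamma_\xi(d)}_{\epsilon,\gamma_1},\phi_0^{(d)}\rangle$ with $d_\epsilon/\sqrt{[\Gamma:\Gamma_\xi(d)]}$ via $\Gamma$-invariance of $\tilde\phi_0$, and invoke the uniform spectral gap to make $\delta'$ independent of $d$. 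Your treatment is if anything slightly more scrupulous than the paper's about \emph{why} the exponent $\epsilon_0$ and the implied constants do not degrade with $d$ — the paper simply asserts uniformity after citing the spectral-gap theorem, whereas you flag that the Sobolev exponent $p$, the cut-off data $\eta$, and $F_{\Lambda_\Gamma}$ all lift unchanged to the cover — but this is elaboration of the same argument, not a different one.
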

\begin{proof}By Lemma \ref{est},
for $\Phi^{\Gamma_\xi(d)}_{\epsilon,\gamma_1}\in
C_c^\infty(\Gamma_\xi(d)\backslash G)^K$ and $K$-invariant
$||\cdot||$, since $\Gamma_\xi(d)$ is finite index in $\Gamma$,
$\delta_{\Gamma_\xi(d)}=\delta_\Gamma$ and
$$\langle F_T^{\Gamma_\xi(d)}, \Phi^{\Gamma_\xi(d)}_{\epsilon,\gamma_1} \rangle=
\langle\Phi^{\Gamma_\xi(d)}_{\epsilon,\gamma_1}, \phi_0^{\Gamma_\xi(d)}\rangle
{\delta_\Gamma}^{-1}c_{\phi_0^{\Gamma_\xi(d)}}T^{\delta_\Gamma}
||\xi||^{-\delta_\Gamma}(1+O(T^{-\delta'})).$$  Here since $\delta'$
depends only on the spectral gap and since it is uniform over all
square free integers $d$, it is uniform. Let $\tilde \phi_0$ be a
lift to $M(d)=\Gamma_\xi(d)\backslash H^4_\bR$ of the bottom unit
eigenfunction $\phi_0$ on $M=\Gamma\backslash H^4_\bR$. Then the
unit bottom eigenfunction on $M(d)=\Gamma_\xi(d)\backslash H^4_\bR$
is
$$\phi_0^{\Gamma_\xi(d)}=\frac{1}{\sqrt{[\Gamma:\Gamma_\xi(d)]}}\tilde\phi_0.$$
Since $\Phi^{\Gamma_\xi(d)}_{\epsilon,\gamma_1}$ is an
$\epsilon$-approximation to the identity around $[\gamma^{-1}]$ in
$M(d)$ and $\tilde\phi_0$ is invariant under $\Gamma$,
$$\langle\Phi^{\Gamma_\xi(d)}_{\epsilon,\gamma_1},
\phi_0^{\Gamma_\xi(d)}\rangle=\langle\Phi^{\Gamma_\xi(d)}_{\epsilon,e},\frac{1}{\sqrt{[\Gamma:\Gamma_\xi(d)]}}\tilde
\phi_0\rangle_{L^2(M(d))}$$$$=\frac{1}{\sqrt{[\Gamma:\Gamma_\xi(d)]}}\langle
\Phi_\epsilon, \phi_0
\rangle_{L^2(M)}=\frac{1}{\sqrt{[\Gamma:\Gamma_\xi(d)]}}d_\epsilon.$$
Also note that
$c_{\phi_0^{\Gamma_\xi(d)}}=\frac{1}{\sqrt{[\Gamma:\Gamma_\xi(d)]}}c_{\phi_0}$
from Section \ref{digression}. Here once we fix $\phi_\epsilon$, the
implied constant depends only on $F_{\Lambda_{\Gamma_\xi(d)}}$. But
since $\Gamma_\xi(d)$ is of finite index of $\Gamma$, it is
constant.  Hence the claim follows.
\end{proof}
\begin{corollary}There exists $\epsilon_0$ uniform over all square
free integer $d$ such that
$$|A_d(T)|= \frac{\cal
O^0_f(d)}{[\Gamma:\Gamma_\xi(d)]}(\chi+O(T^{\delta_\Gamma-\epsilon_0}))
,$$ where $\cal
O^0_f(d)=\sum_{\gamma_1\in\Gamma_\xi(d)\backslash\Gamma, \
f(\xi\gamma_1)=0(d)}1$  and
$\chi=\delta_\Gamma^{-1}c_{\phi_0}d_\epsilon||\xi||^{-\delta_\Gamma}T^{\delta_\Gamma}$.
\end{corollary}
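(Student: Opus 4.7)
The plan is to reduce $|A_d(T)|$ to a finite combination of the matrix coefficients already controlled by Proposition~\ref{uniform}, and then harvest the asymptotic uniformly in $d$. This splits into a combinatorial bookkeeping step and an analytic unfolding step.

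First I would check that the congruence condition $f(\xi\gamma)\equiv 0 \pmod d$ depends only on the coset of $\gamma$ in $\Gamma_\xi(d)\backslash\Gamma$: if $\gamma'\in\Gamma_\xi(d)$ then $\xi\gamma'\equiv\xi \pmod d$, so $\xi\gamma'\gamma\equiv\xi\gamma\pmod d$ (the right action by the integer matrix $\gamma$ preserves congruences), and since $f$ has integer coefficients, $f(\xi\gamma'\gamma)\equiv f(\xi\gamma)\pmod d$. Fixing coset representatives $\{\gamma_i\}$ for $\Gamma_\xi(d)\backslash\Gamma$, the number of $i$ with $f(\xi\gamma_i)\equiv 0\pmod d$ equals $\mathcal{O}^0_f(d)$ by definition. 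Using $\text{Stab}_\Gamma(\xi)=\text{Stab}_{\Gamma_\xi(d)}(\xi)$, one regroups
$$|A_d(T)|=\sum_{i:\ f(\xi\gamma_i)\equiv 0(d)}\ \sum_{\gamma\in \text{Stab}_\Gamma(\xi)\backslash\Gamma_\xi(d)} w_T(\gamma\gamma_i).$$

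Next I would unfold each inner product $\langle F^{\Gamma_\xi(d)}_T,\Phi^{\Gamma_\xi(d)}_{\epsilon,\gamma_i}\rangle$ on $\Gamma_\xi(d)\backslash H^4_\bR$. Since $F^{\Gamma_\xi(d)}_T$ is left $\Gamma_\xi(d)$-invariant and $\Phi^{\Gamma_\xi(d)}_{\epsilon,\gamma_i}(g)=\sum_{\gamma\in\Gamma_\xi(d)}\phi_\epsilon(\gamma_i^{-1}\gamma g)$, the standard unfolding trick converts the integral over a fundamental domain of $\Gamma_\xi(d)$ into an integral over $G/K$ against $\phi_\epsilon(\gamma_i^{-1}g)$. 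After the change of variable $g\mapsto\gamma_i g$ this becomes
$$\langle F^{\Gamma_\xi(d)}_T,\Phi^{\Gamma_\xi(d)}_{\epsilon,\gamma_i}\rangle=\sum_{\gamma\in\text{Stab}_\Gamma(\xi)\backslash\Gamma_\xi(d)}\int_{G/K}\chi_{B_T}(\xi\gamma\gamma_i g)\phi_\epsilon(g)d\mu(g)=\sum_{\gamma} w_T(\gamma\gamma_i).$$
Substituting this identity into the previous display and applying Proposition~\ref{uniform} term by term yields
$$|A_d(T)|=\mathcal{O}^0_f(d)\cdot\frac{c_{\phi_0}d_\epsilon}{\delta_\Gamma[\Gamma:\Gamma_\xi(d)]}||\xi||^{-\delta_\Gamma}T^{\delta_\Gamma}(1+O(T^{-\epsilon_0})),$$
which is exactly $\frac{\mathcal{O}^0_f(d)}{[\Gamma:\Gamma_\xi(d)]}(\chi+O(T^{\delta_\Gamma-\epsilon_0}))$.

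The only delicate point, and the one I expect to be the main obstacle, is the uniformity of both $\epsilon_0$ and the implied constant over all square-free $d$. Uniformity of $\epsilon_0$ is exactly what Proposition~\ref{uniform} provides, powered by the uniform spectral gap theorem for congruence quotients established via Bourgain--Gamburd--Sarnak, Salehi-Golsefidy--Varj\'u, Breuillard--Green--Tao and Pyber--Szabo. Uniformity of the implied constant follows because the constant in Theorem~\ref{equidistribution} depends only on a Sobolev norm of the fixed $\phi_\epsilon$ and on a fundamental set for the parabolic fixed points of the acting group, and the latter is stable as $d$ varies since $\Gamma_\xi(d)$ is a finite-index subgroup of $\Gamma$ and has the same limit set.
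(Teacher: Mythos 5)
Your proof is correct and follows essentially the same route as the paper: regroup the sum over $\Gamma_\xi(d)\backslash\Gamma$-cosets satisfying the congruence condition, unfold each resulting inner sum into the $L^2$ pairing $\langle F_T^{\Gamma_\xi(d)},\Phi^{\Gamma_\xi(d)}_{\epsilon,\gamma_i}\rangle$, and apply Proposition~\ref{uniform} termwise. Your added verification that the congruence condition descends to cosets and your explicit remarks on uniformity are details the paper leaves implicit, but the argument is the same one.
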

\begin{proof}
$$|A_d(T)|=\sum_{n=0(d)}a_n(T)=\sum_{\scriptstyle \gamma\in{\text {Stab}}_\Gamma(\xi)\backslash
\Gamma \atop  \scriptstyle
f(\xi\gamma)=0(d)}w_T(\gamma)=\sum_{\scriptstyle
\gamma_1\in\Gamma_\xi(d)\backslash\Gamma \atop \scriptstyle
f(\xi\gamma_1)=0(d)}\sum_{\gamma\in\text{Stab}_\Gamma(\xi)\backslash\Gamma_\xi(d)}w_T(\gamma\gamma_1)$$
$$=\sum_{\gamma_1\in\Gamma_\xi(d)\backslash\Gamma,
\
f(\xi\gamma_1)=0(d)}\sum_{\gamma\in\text{Stab}_\Gamma(\xi)\backslash\Gamma_\xi(d)}\int_{G/K}\chi_{B_T}(\xi\gamma\gamma_1
\gamma_1^{-1}g)\phi_\epsilon(\gamma_1^{-1}g)d\mu(g)$$
$$=\sum_{\gamma_1\in\Gamma_\xi(d)\backslash\Gamma,
\
f(\xi\gamma_1)=0(d)}\int_{G/K}F^{\Gamma_\xi(d)}_T(g)\phi_\epsilon(\gamma_1^{-1}g)d\mu(g)$$
$$=\sum_{\gamma_1\in\Gamma_\xi(d)\backslash\Gamma,
\ f(\xi\gamma_1)=0(d)}\int_{\Gamma_\xi(d)\backslash
G/K}F^{\Gamma_\xi(d)}_T(g)\Phi_{\epsilon,\gamma_1}^{\Gamma_\xi(d)}(g)d\mu(g)$$
$$=\sum_{\gamma_1\in\Gamma_\xi(d)\backslash\Gamma,
\ f(\xi\gamma_1)=0(d)}\langle F_T^{\Gamma_\xi(d)},
\Phi_{\epsilon,\gamma_1}^{\Gamma_\xi(d)}\rangle_{L^2(\Gamma_\xi(d)\backslash
H^4_\bR)}.$$ By Proposition \ref{uniform},
$$|A_d(T)|=\sum_{\gamma_1\in\Gamma_\xi(d)\backslash\Gamma,
\ f(\xi\gamma_1)=0(d)}\frac{c_{\phi_0}d_\epsilon}{\delta_\Gamma
[\Gamma:\Gamma_\xi(d)]}||\xi||^{-\delta_\Gamma}T^{\delta_\Gamma}(1+O(T^{-\epsilon_0}))$$
$$= \frac{\cal
O^0_f(d)}{[\Gamma:\Gamma_\xi(d)]}(\chi+O(T^{\delta_\Gamma-\epsilon_0}))
,$$ where $\cal
O^0_f(d)=\sum_{\gamma_1\in\Gamma_\xi(d)\backslash\Gamma, \
f(\xi\gamma_1)=0(d)}1$ and
$\chi=\delta_\Gamma^{-1}c_{\phi_0}d_\epsilon||\xi||^{-\delta_\Gamma}T^{\delta_\Gamma}$,
hence the claim follows.
\end{proof}

\subsection{Digression to an algebraic group} Let $G$ be a semisimple
algebraic group defined over $\Q$. Denote $G(\F_p)$ a reduction of
$G$ to an algebraic group defined over $\F_p=\Z/p\Z$. In our case
$G=Spin(4,1)$, a double cover of $SO^0(4,1)$. Note that a stabilizer
of a point $\xi\in V=\{v=(x_1,\cdots,x_5)|Q(v)=0\}\setminus 0$ is a
connected $\Q$-group $H=NM$, so that $V=G/H$. By \cite{PR}
(Proposition 3.22), the reduction of $V$ to
$V(\F_p)=\{v=(v_1,\cdots,v_5)\in \F_p^5:Q(v)=0(p)\}$ is
$G(\F_p)/H(\F_p)$. By \cite{MVW}, since $\Gamma$ is Zariski dense,
the reduction map
$$\Gamma\ra G(\F_p)$$ is surjective. Note that $\Gamma_\xi(p)$ is a stabilizer of $\bar\xi$ in $V(\F_p)$ where $\bar\xi$
is a $p$-reduced image of $\xi$ in $V(\F_p)$. Since
the reduction of
$\Gamma$ is $G(\F_p)$,
$$[\Gamma,\Gamma_\xi(p)]=\#V(\F_p)\sim p^4 $$ by \cite{BS}.

Let $f_1(x_1,\cdots,x_5)=x_1,\ f_2(x_1,\cdots,x_5)=x_1x_2$. To
estimate $\cal
O^0_{f_1(p)}=\sum_{\gamma_1\in\Gamma_\xi(p)\backslash\Gamma, \
f_1(\xi\gamma_1)=0(p)}1$, if
$Q'(x_2,\cdots,x_5)=Q(0,x_2,\cdots,x_5)$ is a quadratic form whose
zero set is $W=\{(0,x_2,\cdots,x_5)\in V\}$, then
$$\cal O^0_{f_1(p)}=\# W(\F_p)\sim p^{3}$$ by \cite{BS}. Similarly
$$\cal O^0_{f_2(p)}=\# W(\F_p)\sim 2p^{3}.$$
Then
$$g_1(p)=\frac{\cal O^0_{f_1}(p)}{[\Gamma:\Gamma_\xi(p)]}\sim p^{-1},
 g_2(p)=\frac{\cal O^0_{f_2}(p)}{[\Gamma:\Gamma_\xi(p)]}\sim 2p^{-1}.$$
In general, if $f_k(x_1,\cdots,x_5)=x_1x_2\cdots x_k$, then
$g_k(p)\sim kp^{-1}$ for $k=1,\cdots, 5$.

\subsection{Asymptotic growth of number of spheres with prime
curvatures} Let $f_1(x_1,\cdots,x_5)=x_1,\
f_2(x_1,\cdots,x_5)=x_1x_2, f_k(x_1,\cdots,x_5)=x_1x_2\cdots x_k$ as
before and for square free integer $d$, let
$$g_i(d)=\frac{\cal O^0_{f_i}(d)}{[\Gamma:\Gamma_\xi(d)]}.$$
\begin{proposition}\label{prime}There exists a finite set $S$ of primes such that
\begin{enumerate}
\item for any square free integer $d=d_1d_2$ with no prime factors
in $S$ and for each $i=1,2$, $g_i(d_1d_2)=g_i(d_1)g_i(d_2)$
\item for any prime $p$ outside $S$, $g_i(p)\in (0,1)$ and
$g_1(p)=p^{-1}+O(p^{-q}),\ g_2(p)=2p^{-1}+O(p^{-q})$ for some $q\geq
0$.
\end{enumerate}

\end{proposition}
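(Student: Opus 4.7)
The plan is to combine strong approximation for the Zariski-dense subgroup $\Gamma \subset G = \mathrm{Spin}(4,1)$ with the Chinese Remainder Theorem, thereby reducing everything to counts of $\F_p$-points on $V = \{Q=0\}\setminus\{0\}$ and on its hyperplane sections. The finite set $S$ will be chosen to absorb the primes where Matthews--Vasserstein--Weisfeiler strong approximation fails, the primes of bad reduction for $V$ and $H=NM$, and finitely many small primes where asymptotic estimates below do not yet dominate.

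First I will identify $\Gamma_\xi(d)\backslash \Gamma$ with the orbit $\xi\cdot\Gamma \bmod d$ inside $V(\Z/d\Z)$ via the bijection $\gamma \mapsto \xi\gamma \bmod d$. For $d$ squarefree and coprime to $S$, the reduction map $\Gamma \to G(\Z/d\Z) = \prod_{p\mid d} G(\F_p)$ is surjective by \cite{MVW}, and since $V=G/H$ satisfies $V(\F_p)=G(\F_p)/H(\F_p)$ by \cite{PR} Proposition 3.22, the group $G(\Z/d\Z)$ acts transitively on $V(\Z/d\Z)$. Consequently $\xi\cdot\Gamma\bmod d = V(\Z/d\Z)$, so that
$$[\Gamma:\Gamma_\xi(d)] = \#V(\Z/d\Z), \qquad \mathcal{O}^0_{f_i}(d) = \#\{v\in V(\Z/d\Z): f_i(v)\equiv 0 \pmod d\}.$$

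Part (1) now falls out of CRT: for coprime squarefree $d_1,d_2$ with $d=d_1d_2$, the isomorphism $V(\Z/d\Z)\cong V(\Z/d_1\Z)\times V(\Z/d_2\Z)$ is compatible with the condition $f_i(v)\equiv 0\pmod d$, which decomposes as the pair of congruences modulo $d_1$ and $d_2$ respectively. Hence both the numerator and denominator of $g_i(d)$ split as products over $d_1,d_2$, yielding $g_i(d_1d_2)=g_i(d_1)g_i(d_2)$.

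For part (2) I will invoke the Lang--Weil type estimates already in play via \cite{BS}: since $V$ is geometrically irreducible of dimension $4$, one has $\#V(\F_p)=p^4+O(p^{7/2})$. For $f_1=x_1$, the section $V\cap\{x_1=0\}$ is a geometrically irreducible affine quadric of dimension $3$, giving $p^3+O(p^{5/2})$ points, so $g_1(p)=p^{-1}+O(p^{-3/2})$. For $f_2=x_1x_2$, inclusion--exclusion on $\{x_1=0\}\cup\{x_2=0\}$ yields two dimension-$3$ sections minus a dimension-$2$ intersection, hence $2p^3+O(p^{5/2})$ points, so $g_2(p)=2p^{-1}+O(p^{-3/2})$. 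The strict inequalities $0<g_i(p)<1$ for $p\notin S$ follow by enlarging $S$ to swallow the finitely many primes where either the main term does not already dominate the error or where the numerator degenerates (existence of a nontrivial zero of $Q(0,x_2,\ldots,x_5)$ over $\F_p$ is forced by Chevalley--Warning for $p\notin S$). The principal technical point that needs care is ensuring that $S$ can be taken finite and uniform in the application of MVW strong approximation over composite squarefree moduli; after that, everything reduces to bookkeeping with CRT and Lang--Weil.
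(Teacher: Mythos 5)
Your proposal is correct and follows essentially the same route as the paper: strong approximation via \cite{MVW} to identify $\xi\Gamma \bmod d$ with $V(\Z/d\Z)$, the Chinese Remainder Theorem for multiplicativity, and the point counts $\#V(\F_p)\sim p^4$, $\#W(\F_p)\sim p^3$ from \cite{BS} for the asymptotic of $g_i(p)$. The one step you flag at the end but do not fill in --- promoting surjectivity of $\Gamma\to G(\F_p)$ for each large prime $p$ to surjectivity onto the full product $G(\Z/d\Z)=\prod_{p\mid d}G(\F_p)$ for composite squarefree $d$ --- is exactly what the paper handles with Goursat's lemma (using that the $G(\F_{p_i})$ are pairwise non-isomorphic simple quotients), so it is worth making that argument explicit rather than attributing the composite-modulus surjectivity directly to \cite{MVW}.
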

\begin{proof}The second claim is already shown. By \cite{MVW}, for a
large prime $p$, the reduction of $\Gamma$ is $G(\F_p)$. So let $S$
be the set of primes which are less than such a prime $p$.  Then for
$d=p_1\cdots p_k$ square free with $p_i\notin S$, the diagonal
reduction $$\Gamma\ra G(\Z/d\Z)\ra G(\F_{p_1})\times \cdots \times
G(\F_{p_k})$$ is surjective and it follows from Goursat's lemma that
$\Gamma$ surjects onto $G(\Z/d_1\Z)\times G(\Z/d_2\Z)$ for any
square free $d=d_1d_2$ without any prime factor in $S$. Hence for
$d=d_1d_2$ square free without prime factors in $S$, the orbit of
$\xi$ mod $d$ under $\Gamma$ is equal to the orbit of
$G(\Z/d_1\Z)\times G(\Z/d_2\Z)$ in $(\Z/d_1\Z)^5\times
(\Z/d_1\Z)^5$. The same thing is true for the orbit satisfying the
equation $f(\xi\gamma)=0(d_i)$. Therefore $g(d)=g(d_1)g(d_2)$. See
\cite{B,O}.
\end{proof}

Note that $a_n(T)=\sum_{\gamma\in\text{Stab}_\Gamma(\xi)\backslash
\Gamma,\ f_1(\xi\gamma)=n}w_T(\gamma)$ is a smooth counting function
for the number of vectors $(x_1,\cdots,x_5)$ in the orbit $\xi\cal
A^t$ of maximum norm bounded above by $T$ and $x_1=n$. We want to
obtain an asymptotic growth of the number $\pi^{\cal P}(T)$ of
spheres in an Apollonian packing $\cal P$ whose curvatures are prime
and less than $T$. The initial spheres with curvature
$\xi=(\xi_1,\cdots,\xi_5)$ is chosen so that $\xi_1<0$ which
corresponds to the largest sphere bounding all the other spheres,
and $\xi_2,\xi_3,\xi_4,\xi_5$ the smallest curvatures (largest
spheres) in the packing. Upon iteration, one obtains a smaller
sphere, hence larger curvature.

Since any sphere in the packing is obtained by the initial five
spheres corresponding to $\xi$ under the group $\cal A$, with the
maximum norm on $\bR^5$,
$$\pi^{\cal P}(T)\leq 4+\#\{\gamma\in \cal
A:||\xi\gamma^t||_{\text{max}}\ \text{is\ prime}<T\}$$
$$\ll\sum_{i=1}^5\#\{v\in \xi\cal A^t:||v||_{\text{max}}<T,\ v_i\
\text{is \ prime}\}.$$ Similarly the number $\pi^{\cal P}_2(T)$ of
twin prime curvatures less than $T$ is
$$\pi^{\cal P}_2(T)\ll\#\{\gamma\in\cal
A:||\xi\gamma^t||_{\text{max}}\ \text{is\ prime}<T,\ \text{one\
more\ entry\ of}\ \xi\gamma^t\ \text{is\
prime}\}$$$$\ll\sum_{i=1}^5\sum_{j\neq
i}\#\{v=(v_1,\cdots,v_5)\in\xi\cal A^t:||v||_{\text{max}}<T,\
v_i,v_j\ \text{primes}\}.$$

 This counting problem uses so-called a Selberg's sieve; using $S$
in Proposition \ref{prime}, and $g$ as one of $g_i$, a
multiplicative function $h$ on square free integers outside $S$ that
$h(p)=\frac{g(p)}{1-g(p)}$ for a prime $p$ outside $S$,
\begin{theorem}\label{number}(\cite{IK},\ Theorem 6.4)
Let $P$ be a finite product of distinct primes outside $S$ such that
for any square free $d| P$
$$|A_d(T)|=g(d)\chi+r_d(A(T)).$$  Then for any $D>1$,
$$S(A(T),P)=\sum_{(n,P)=1}a_n(T)\leq
\chi(\sum_{d<\sqrt D, d|P}h(d))^{-1}+\sum_{d<D,d
|P}\tau_3(d)|r_d(A(T))|,$$ where $\tau_3(d)$ denotes the number of
representations of $d$ as the product of three natural numbers.
\end{theorem}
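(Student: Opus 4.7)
The plan is to carry out the classical Selberg upper bound sieve construction; this result is cited as Theorem 6.4 of \cite{IK}, so I would only sketch the structure. The starting observation is that for any real weights $\{\lambda_d\}_{d \mid P}$ with $\lambda_1 = 1$ we have the pointwise inequality
$$\mathbf{1}_{(n,P)=1}(n) \leq \Bigl(\sum_{d \mid (n,P)} \lambda_d\Bigr)^{2},$$
since the right-hand side equals $\lambda_1^{2} = 1$ when $(n,P)=1$ and is a nonnegative square otherwise. I would further impose $\lambda_d = 0$ for $d \geq \sqrt{D}$, so that after expanding and swapping the order of summation only squarefree $[d_1,d_2] < D$ with $[d_1,d_2] \mid P$ survive:
$$S(A(T),P) \leq \sum_{d_1,d_2 \mid P} \lambda_{d_1}\lambda_{d_2}\,|A_{[d_1,d_2]}(T)|.$$

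Next I would substitute the hypothesis $|A_d(T)| = g(d)\chi + r_d(A(T))$ and split into a main term and an error term. The main term is the quadratic form
$$Q(\lambda) = \sum_{d_1,d_2 \mid P} \lambda_{d_1}\lambda_{d_2}\,g([d_1,d_2]).$$
Using the multiplicativity of $g$ (from Proposition \ref{prime}) and the relation $g([d_1,d_2])\,g((d_1,d_2)) = g(d_1)g(d_2)$, one performs the standard triangular change of variables $y_e = \sum_{d,\, e \mid d} \lambda_d g(d)/g(e)$ which diagonalizes $Q$ into $\sum_{e < \sqrt{D},\,e \mid P} h(e)^{-1} y_e^{2}$ subject to one linear constraint coming from $\lambda_1 = 1$. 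Cauchy--Schwarz then gives $\min Q = \bigl(\sum_{e<\sqrt{D},\,e \mid P} h(e)\bigr)^{-1}$, attained at weights that also satisfy $|\lambda_d| \leq 1$.

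Finally, I would group the error by $d = [d_1, d_2]$. For squarefree $d \mid P$, the number of ordered pairs $(d_1,d_2)$ with $[d_1,d_2] = d$ equals $3^{\omega(d)}$, since for each prime $p \mid d$ one independently chooses whether $p$ divides $d_1$ only, $d_2$ only, or both; for squarefree $d$ this is precisely $\tau_3(d)$. Combined with $|\lambda_{d_1}\lambda_{d_2}| \leq 1$, this produces the advertised error bound $\sum_{d<D,\,d\mid P} \tau_3(d)\,|r_d(A(T))|$.

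The only genuinely delicate step is the diagonalization and minimization of $Q(\lambda)$ together with the verification that the extremal weights satisfy $|\lambda_d| \leq 1$; this is Selberg's classical computation, and I would cite \cite{IK} rather than reproduce it. The remaining ingredients---the squaring trick, the multiplicativity from Proposition \ref{prime}, and the combinatorial identity counting pairs with given lcm---are routine.
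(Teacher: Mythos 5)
Your sketch is a correct outline of the classical Selberg $\Lambda^2$ sieve, which is exactly the proof of Theorem~6.4 in the cited reference \cite{IK}; the paper itself does not reprove the statement but simply quotes it. All of your steps are sound: the squaring trick requires $a_n \geq 0$ (which holds here), the diagonalization $g(d_1)g(d_2)/g((d_1,d_2)) = g(d_1)g(d_2)\sum_{e\mid(d_1,d_2)}h(e)^{-1}$ follows from $1/g(d) = \sum_{e\mid d} 1/h(e)$ with $h(p) = g(p)/(1-g(p))$, the Cauchy--Schwarz minimization gives $\bigl(\sum_{e<\sqrt D, e\mid P} h(e)\bigr)^{-1}$, the extremal weights satisfy $|\lambda_d|\le 1$, and the count of ordered pairs $(d_1,d_2)$ with $[d_1,d_2]=d$ squarefree is $3^{\omega(d)}=\tau_3(d)$. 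Since you explicitly defer the delicate verification that $|\lambda_d|\le 1$ to the reference, and the paper does the same (it defers the entire proof), there is nothing to object to.
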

From all these we obtain
\begin{theorem}\label{packing}
Given a bounded primitive integral Apollonian sphere packing $\cal
P$ in $\bR^3$,
$$\pi^{\cal P}(T)\ll\frac{T^{\delta_\Gamma}}{\log T}$$ and
$$\pi^{\cal P}_2(T)\ll\frac{T^{\delta_\Gamma}}{(\log T)^2}.$$
Generally if $\pi^{\cal P}_k(T)$ denotes the number of $k$-mutually
tangent spheres whose curvatures are prime numbers less than $T$,
then
$$\pi^{\cal P}_k(T)\ll\frac{T^{\delta_\Gamma}}{(\log T)^k}$$ for
$k\leq 5$.
\end{theorem}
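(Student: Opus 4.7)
The plan is to bound $\pi_k^{\cal P}(T)$ by applying Selberg's sieve (Theorem \ref{number}) to the sequence $A(T)=\{a_n(T)\}$ associated to the polynomial $f_k(x_1,\ldots,x_5)=x_1 x_2\cdots x_k$. After summing over the $\binom{5}{k}$ choices of which $k$ of the five coordinates of $\xi\gamma$ are meant to be the prime curvatures, the quantity $\pi_k^{\cal P}(T)$ is majorized by the sifted sum $S(A(T),P)=\sum_{(n,P)=1} a_n(T)$, where $P$ is the product of the primes below a threshold $z=T^{\alpha}$ (and outside the exceptional finite set $S$ of Proposition \ref{prime}) for a small $\alpha>0$ to be chosen.

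First I would verify the hypotheses of Selberg's sieve. The Corollary to Proposition \ref{uniform} yields $|A_d(T)|=g_k(d)\chi+r_d(A(T))$ with $|r_d(A(T))|\ll g_k(d)\,T^{\delta_\Gamma-\epsilon_0}$, the point being that $\epsilon_0$ is uniform in squarefree $d$ thanks to the uniform spectral gap. Multiplicativity of $g_k$ on squarefree $d$ coprime to $S$ follows by the Goursat-plus-strong-approximation argument already used in Proposition \ref{prime}: the subvariety of $V$ cut out by $x_{i_1}=\cdots=x_{i_k}=0$ is a codimension-$k$ subvariety of the light cone, so by \cite{BS} its $\F_p$-point count is $\sim k\,p^{4-k}$, giving the local density $g_k(p)=k/p+O(p^{-q})$ for $p\notin S$ and hence $h(p)=g_k(p)/(1-g_k(p))=k/p+O(p^{-\min(2,q)})$.

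Next I would estimate the two sides of Theorem \ref{number}. A Mertens-type expansion based on the local density above produces
$$\sum_{d<\sqrt{D},\,d|P} h(d)\gg(\log D)^{k},$$
which is where the sieve dimension $k$ enters. Choosing $D=T^{\epsilon_0/2}$, the remainder satisfies
$$\sum_{d<D,\,d|P}\tau_3(d)\,|r_d(A(T))|\ll T^{\delta_\Gamma-\epsilon_0}\sum_{d<D}\tau_3(d)g_k(d)\ll T^{\delta_\Gamma-\epsilon_0/3},$$
since $g_k(d)$ decays and $\tau_3$ grows only by a logarithmic power. This is negligible next to the main term $\chi/(\log D)^{k}\asymp T^{\delta_\Gamma}/(\log T)^{k}$, and Theorem \ref{number} then delivers $S(A(T),P)\ll T^{\delta_\Gamma}/(\log T)^{k}$. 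To convert $S(A(T),P)$ into $\pi_k^{\cal P}(T)$ one must discard configurations in which one of the prime curvatures lies below $z=D^{1/2}$; for each fixed small value $x_i=p<z$ the equidistribution Theorem \ref{equidistribution} applied to the congruence subgroup controlling $x_i\equiv 0\pmod p$ gives $O(T^{\delta_\Gamma-\eta})$ such orbits, and summing over the finitely many small primes is absorbed into the main estimate provided $\alpha<\eta/\delta_\Gamma$.

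The main obstacle I anticipate is the uniform local density computation $g_k(p)\sim k/p$ for $k=3,4,5$; for $k=1,2$ it is done explicitly in Proposition \ref{prime}, but extending requires verifying that the intersection of the light cone $Q=0$ with the linear subspace $\{x_{i_1}=\cdots=x_{i_k}=0\}$ remains a geometrically irreducible variety of the expected dimension over $\F_p$ for all but finitely many primes, so that the Lang--Weil/\cite{BS} point-count applies with the correct leading coefficient. This is where the exceptional set $S$ of bad primes may need to be enlarged. A secondary and more routine subtlety is to choose the smooth weight $\phi_\epsilon$ so that $w_T(\gamma)\geq 1$ whenever $\|\xi\gamma\|<(1-\epsilon)T$, so that the weighted sieve count genuinely majorizes the integer-valued count $\pi_k^{\cal P}(T)$, at the cost of replacing $T$ by $(1+O(\epsilon))T$ in the final estimate, which is harmless.
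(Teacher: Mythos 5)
Your overall strategy is the same as the paper's — Selberg's sieve (Theorem \ref{number}) applied to the weighted counting sequence $a_n(T)$ with the polynomial $f_k$, exploiting the uniform spectral gap to control the remainder $r_d(A(T))$ — but two steps of your write-up are off, one of them materially.

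First, your local density computation is internally inconsistent. You describe the variety as ``cut out by $x_{i_1}=\cdots=x_{i_k}=0$,'' a codimension-$k$ subvariety with $\sim p^{4-k}$ points, yet then assert $g_k(p)=k/p+O(p^{-q})$. If the variety really had $\sim k p^{4-k}$ points the density would be $k/p^{k}$, not $k/p$. The condition $f_k(\xi\gamma)\equiv 0\pmod p$ is the \emph{union} $\bigcup_{i\leq k}\{x_i\equiv 0\}$ intersected with the light cone, not the intersection; each hyperplane slice contributes $\sim p^{3}$ points and the pairwise overlaps are lower order, so $\mathcal{O}^0_{f_k}(p)\sim k\,p^{3}$ and $g_k(p)\sim k\,p^{3}/p^{4}=k/p$. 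Your final formula is correct but the reasoning as written does not produce it.

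Second, and more seriously, the way you discard configurations with a small prime curvature does not work. You propose that, for a fixed prime $p<z$, applying Theorem \ref{equidistribution} to the congruence subgroup $\Gamma_\xi(p)$ shows there are $O(T^{\delta_\Gamma-\eta})$ orbit points with $x_i\equiv 0\pmod p$. But that congruence count is precisely the sieve's main term, and by the corollary to Proposition \ref{uniform} it is $\sim g_1(p)\chi\asymp T^{\delta_\Gamma}/p$, which is nowhere near $T^{\delta_\Gamma-\eta}$; summing over $p<z$ even gives a $\log\log z$ factor on top. The paper instead observes that one may restrict to orbit points in which the sifted coordinate is the \emph{maximal} one (each sphere in the packing appears as the maximal entry of the $5$-tuple that introduced it), so that $x_1$ prime and $x_1<z$ forces $\|v\|_{\max}<z$, and the number of such orbit points is at most $|A(z)|\ll z^{\delta_\Gamma}$; with $z=T^{\epsilon_0/4}$ this is absorbed into the error. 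That observation — using the maximum norm and the ``newest sphere has the largest curvature'' structure, rather than an equidistribution bound — is the missing idea in your argument; without it, the conversion from $S(A(T),P)$ to $\pi_k^{\cal P}(T)$ is not justified.
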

\begin{proof}Since $$|A_d(T)|= \frac{\cal
O^0_f(d)}{[\Gamma:\Gamma_\xi(d)]}(\chi+O(T^{\delta_\Gamma-\epsilon_0}))=g_1(d)\chi+r_d(A(T))
,$$ and since $\frac{\cal O^0_f(d)}{[\Gamma:\Gamma_\xi(d)]}<1$,
$r_d(A(T))\ll T^{\delta_\Gamma-\epsilon_0}$. For any $\epsilon_1$,
$\sum_{d<D}\tau_3(d)<1^3+2^3+\cdots+ D^3<< D^{3+\epsilon_1}$, hence
$$\sum_{d<D,d
|P}\tau_3(d)|r_d(A(T))|\ll
D^{3+\epsilon_1}T^{\delta_\Gamma-\epsilon_0}\ll
T^{\delta_\Gamma}/\log T$$ if $D=T^{\epsilon_0/4}$. Hence take $P$
to be the product of all primes less than $T^{\epsilon_0/4}$ outside
$S$. Denote $\mu(n)=1$ if $n$ is square-free and $0$ otherwise. Also
let $s(m)$ denote the largest square-free number dividing $m$. Then
for $h=\frac{g_1}{1-g_1}$, one can deduce that (\cite{IK}, Section
6.6)
$$\sum_{d<\sqrt D, d|P}h(d)\sim\sum_{d<\sqrt D, d|P}\Pi_{p|d}\frac{p^{-1}}{1-p^{-1}}$$
$$=\sum_{d<\sqrt D, d|P}d^{-1}\Pi_{p|d}\frac{1}{1-p^{-1}}=\sum_{d<\sqrt D,
d|P}d^{-1}\Pi_{p|d}(1+\frac{1}{p}+\frac{1}{p^2}+\cdots)$$
$$= \sum_{d=p_1\cdots p_k<\sqrt D,d|P} (\frac{1}{d}+  \frac{1}{dp_1}+\frac{1}{dp_1^2}+\cdots)(1+\frac{1}{p_2}+\frac{1}{p_2^2}+
\cdots)\cdots(1+\frac{1}{p_k}+\frac{1}{p_k^2}+\cdots).
$$ But since the numbers appearing in the denominator is of the form
$m=p_1^{\alpha_1}\cdots p_k^{\alpha_k}$ and since $S$ is a fixed
finite set of primes less than some number, and $P$ is the product
of distinct primes less than $D$ outside $S$, when $D$ tends to
infinity, the above sum is
$$\gg \sum_{d<\sqrt
D}({\mu(d)}\sum_{m,s(m)=d}\frac{1}{m})=\sum_{m,s(m)<\sqrt
D}\frac{1}{m}$$
     $$\gg\sum_{m<\sqrt D}m^{-1}\gg\log D\gg\log T.$$ Hence Theorem
\ref{number} gives
$$S(A(T),P)\ll\frac{T^{\delta_\Gamma}}{\log T}.$$

Since there are only 5 coordinates, we may assume that for any
$v=(v_1,\cdots,v_5)\in\xi\cal A^t$ $v_1$ is the largest coordinate
among $v_i$. Therefore by Equation (\ref{squeeze}) for the product
$P$ of distinct primes less than $T^{\epsilon_0/4}$ outside $S$,
$$\frac{T^{\delta_\Gamma}}{\log
T}\gg
S(A((1+\epsilon)T),P)+(T^{\epsilon_0/4})^{\delta_\Gamma}=\sum_{(n,P)=1}a_n((1+\epsilon)T)+(T^{\epsilon_0/4})^{\delta_\Gamma}
$$$$\gg\#\{v=(x_1,\cdots,x_5)\in \xi\cal A^t:||v||_{\text{
max}}<T, (x_1,P)=1\}+(T^{\epsilon_0/4})^{\delta_\Gamma}$$
$$\gg (T^{\epsilon_0/4})^{\delta_\Gamma}+\#\{v=(x_1,\cdots,x_5)\in \xi\cal A^t:||v||_{\text{
max}}<T, x_1\ \text{prime}>T^{\epsilon_0/4}\}
$$
$$\gg 5\#\{v=(x_1,\cdots,x_5)\in \xi\cal A^t:||v||_{\text{
max}}<T, x_1\ \text{prime}\}$$
$$\gg\#\{v=(x_1,\cdots,x_5)\in \xi\cal A^t:||v||_{\text{
max}}<T, \text{some\ $v_i$\
 prime}\}\gg\pi^{\cal P}(T).$$

Let $\omega(d)$ denote the number of  distinct prime factors of $d$
and ${\cal D}(d)$ the number of positive divisors of $d$ including
$1$. Then ${\cal D}(d)=2^{\omega(d)}$ for square free integer $d$.
For $g_2$, considering $S$ is a finite set and $d|P$ implies that
$d$ is a square free, as before
$$\sum_{d<\sqrt D, d|P}h(d)\sim\sum_{d<\sqrt D, d|P}\Pi_{p|d}\frac{2p^{-1}}{1-2p^{-1}}$$$$=\sum_{d<\sqrt D, d|P}
d^{-1}2^{\omega(d)}\Pi_{p|d}\frac{1}{1-2p^{-1}}\gg\sum_{d<\sqrt
D}\mu(d) d^{-1}{\cal
D}(d)\Pi_{p|d}(1+\frac{1}{p}+\frac{1}{p^2}+\cdots)$$
$$=\sum_{d<\sqrt D}\mu(d) \cal D(d)\sum_{m,s(m)=d}\frac{1}{m}. $$
If $m=p_1^{\alpha_1}\cdots p_k^{\alpha_k}$ for distinct primes $p_i$
then $s(m)=d=p_1\cdots p_k$. Then $\cal D(d)=2^k,\ \cal
D(m)=(\alpha_1+1)\cdots (\alpha_k+1)$. Hence
$$\frac{\cal D(s(m))}{d}> \frac{\cal D(m)}{m}.$$
For each $m\leq \sqrt D$, if $s(m)=d\leq m$, then $\frac{\cal
D(d)}{d}$ appears in $$\sum_{d<\sqrt D}\mu(d) \cal
D(d)\sum_{m,s(m)=d}\frac{1}{m}.$$ Hence
$$\sum_{d<\sqrt D}\mu(d) \cal D(d)\sum_{m,s(m)=d}\frac{1}{m}>
\sum_{m<\sqrt D}\frac{\cal D(m)}{m}.$$

 Note by \cite{MV} (page 94),
$$\sum_{n=1}^N\frac{\cal D(n)}{n}=\frac{1}{2}(\log N)^2+O(\log N).$$
From this
$$\sum_{d<\sqrt D, d|P}h(d)\gg (\log D)^2\gg (\log T)^2.$$
$$\frac{T^{\delta_\Gamma}}{(\log T)^2}\gg S(A((1+\epsilon)T),P)=\sum_{(n,P)=1}a_n((1+\epsilon)T)
$$$$\gg\#\{v=(x_1,\cdots,x_5)\in \xi\cal A^t:||v||_{\text{
max}}<T, (x_1x_2,P)=1\}+(T^{\epsilon_0/4})^{\delta_\Gamma}$$
$$\gg\#\{v=(x_1,\cdots,x_5)\in \xi\cal A^t:||v||_{\text{
max}}<T, x_1,x_2\ \text{prime}>
T^{\epsilon_0/4}\}+(T^{\epsilon_0/4})^{\delta_\Gamma}$$$$\gg\pi_2^{\cal
P}(T).$$ For general $k\leq 5$, it suffices to estimate
$$\sum_{d<\sqrt D,
d|P}\Pi_{p|d}\frac{kp^{-1}}{1-kp^{-1}}$$$$=\sum_{d<\sqrt D, d|P}
d^{-1}k^{\omega(d)}\Pi_{p|d}\frac{1}{1-kp^{-1}}\gg\sum_{d<\sqrt
D}\mu(d)
d^{-1}k^{\omega(d)}\Pi_{p|d}(1+\frac{1}{p}+\frac{1}{p^2}+\cdots)$$
$$=\sum_{d<\sqrt D}\mu(d) k^{\omega(d)}\sum_{m,s(m)=d}\frac{1}{m}. $$
If $m=p_1^{\alpha_1}\cdots p_k^{\alpha_k}$ for distinct primes $p_i$
then $s(m)=d=p_1\cdots p_k$.  Hence
$$\frac{k^{\omega(d)}}{d}> \frac{k^{\omega(m)}}{m}.$$
For each $m\leq \sqrt D$, if $s(m)=d\leq m$, then
$\frac{k^{\omega(d)}}{d}$ appears in $$\sum_{d<\sqrt D}\mu(d)
k^{\omega(d)}\sum_{m,s(m)=d}\frac{1}{m}.$$ Hence
$$\sum_{d<\sqrt D}\mu(d) k^{\omega(d)}\sum_{m,s(m)=d}\frac{1}{m}>
\sum_{m<\sqrt D}\frac{ k^{\omega(m)}}{m}\gg (\log D)^k,$$ where the
last inequality follows from the following theorem \cite{Ten}, and
communicated to us by Pieter Moree.
\begin{theorem}$$\sum_{n\leq x} k^{\omega(n)}=c_k(0)x(\log
x)^{k-1}+O(x(\log x)^{k-2}),$$ where
$$c_k(0)=\frac{1}{(k-1)!}\Pi_p(1+\frac{k}{p-1})(1-\frac{1}{p})^k.$$
\end{theorem}
\end{proof}
Indeed, using a finer version (Theorem 11.13 of \cite{FI}) of Sieve
method of a multiplicative function
$g(p)=\frac{k}{p}+O(p^{-1-\delta})$
\begin{theorem}There exists a real number $\beta(k)>0$ such that
$$(f(s)+O(\log D)^{-1/6})|A(T)|\Pi_{p|P(z)}(1-g(p))+ R(D)
\leq S(A(T),P(z))$$$$\leq(F(s)+O((\log
D)^{-1/6})|A(T)|\Pi_{p|P(z)}(1-g(p))+ R(D)$$ where $z=D^{1/s}$ with
$s>\beta(k)$, and $P(z)$ is the product of distinct primes less than
$z$ outside $S$, where $F(s)>0$ and $f(s)>0$ are certain functions
of $s\geq 0$, depending on $k$, defined as solutions of explicit
differential-difference equations, such that
$$\lim_{s\ra\infty}f(s)=\lim_{s\ra\infty}F(s)=1,$$
and where     $$R(D)=\sum_{d<D} |r_d(A(T))|.$$ In both upper and
lower bounds, the implied constant depends only on $k$ and on the
constants in the asymptotic of $g(p)$.
\end{theorem}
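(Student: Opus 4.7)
The aim is to deduce this as a direct application of the $\kappa$-dimensional combinatorial (Rosser--Iwaniec / Jurkat--Richert) sieve in the form of Theorem 11.13 of Friedlander--Iwaniec, taking sieve dimension $\kappa = k$. The only substantive work is to verify the standard sieve axioms for the density function $g$; once these are in place, the theorem is a direct quotation.

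First I would record the sieve axioms. By construction of $A(T)$ and by Proposition \ref{prime}, for square-free $d$ coprime to the finite exceptional set $S$ the function $g$ is multiplicative with $0 \le g(p) < 1$, and satisfies the local estimate $g(p) = k/p + O(p^{-1-q})$ for some $q > 0$. Combining this with Mertens' theorem yields
$$\prod_{w \le p < z}(1 - g(p))^{-1} \le \left(\frac{\log z}{\log w}\right)^{k}\left(1 + \frac{L}{\log w}\right), \qquad 2 \le w < z,$$
for an absolute constant $L = L(k, S)$; this is the $(\Omega_2(\kappa, L))$ axiom of dimension $k$, while the pointwise bound $0 \le g(p) \le 1 - c$ supplies $(\Omega_1)$. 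The finitely many primes in $S$ are handled by absorbing them into $L$, since modifying $g$ on finitely many primes changes the sift by a bounded multiplicative factor.

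Second, the Rosser--Iwaniec construction produces upper- and lower-bound sifting weights $\lambda_d^{\pm}$ supported on $\{d < D : d \mid P(z)\}$ with $z = D^{1/s}$, such that for $s > \beta(k)$ one has
$$\sum_{d} \lambda_d^{-} g(d) = \bigl(f(s) + O((\log D)^{-1/6})\bigr)V(z), \qquad \sum_{d} \lambda_d^{+} g(d) = \bigl(F(s) + O((\log D)^{-1/6})\bigr)V(z),$$
where $V(z) = \prod_{p \mid P(z)}(1 - g(p))$ and $F, f$ are the standard $k$-dimensional sieve functions, solutions of the Iwaniec differential-difference system with $F(s), f(s) \to 1$ as $s \to \infty$. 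Plugging the decomposition $|A_d(T)| = g(d)|A(T)| + r_d(A(T))$ into $S(A(T), P(z)) = \sum_d \lambda_d^{\pm}|A_d(T)|$ and sandwiching by the sieve weights yields the asserted two-sided inequality, with the remainders collecting into $R(D) = \sum_{d < D}|r_d(A(T))|$ since $|\lambda_d^{\pm}| \le 1$.

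The main subtlety is the sieve limit $\beta(k)$: for $\kappa \ge 1$ the Rosser--Iwaniec lower bound is not universally non-trivial, and one must impose $s > \beta(k)$ for $f(s)$ to be positive. In our intended application the relevant dimensions are $k = 1, 2, \ldots, 5$, and each has a finite classical value of $\beta(k)$. The uniformity of the implied constants in terms only of $k$ and of the constants appearing in $g(p) = k/p + O(p^{-1-q})$ is built into Theorem 11.13 of Friedlander--Iwaniec, so no further argument is needed once $(\Omega_1)$ and $(\Omega_2(k, L))$ have been checked.
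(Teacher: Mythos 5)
Your proposal is correct and follows essentially the same route as the paper, which states this result as a direct citation of Theorem 11.13 of Friedlander--Iwaniec rather than proving it: your verification of the sieve axioms $(\Omega_1)$ and $(\Omega_2(k,L))$ via Proposition \ref{prime} and Mertens, followed by the Rosser--Iwaniec sandwich with weights $\lambda_d^{\pm}$ supported on divisors of $P(z)$ below $D$, is precisely the content packaged in that reference. The paper leaves this verification implicit, so your write-up usefully spells out what the citation requires.
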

we can try to give a lower bound for our counting problem. Note that
the number $\pi^\cal P(T)$ of spheres whose curvature is prime less
than $T$ is equal to $$\#\{v=(x_1,\cdots,x_5)\in \xi\cal
A^t:||v||_{\text{ max}}={\text {prime}}<T \}.$$ But $a_n(T)$
approximately counts the number of orbit vectors with maximum norm
less than $T$ and whose first coordinate is $n$. Hence as long as
the first coordinate is prime $p$, both $(p,v_2,v_3,v_4,v_5)$ and
$(p,v_2',v_3',v_4',v_5')$ will be counted even though none of
$v_2',v_3',v_4',v_5'$ is prime. This means that $a_n(T)$ with $n$
prime, will count the same sphere as many times as it appears in the
orbit $\xi\cal A^t$. Another difficulty is that
$$\#\{v=(x_1,\cdots,x_5)\in \xi\cal
A^t:||v||_{\text{ max}}<T, T^{\epsilon_0/4}< x_1\ \text{prime}< T
\}$$ is not equivalent to
$$\#\{v=(x_1,\cdots,x_5)\in \xi\cal
A^t:||v||_{\text{ max}}<T, (x_1, \Pi_{\text{prime}\ p<
T^{\frac{\epsilon_0}{4}}}p)=1\}.$$ To make them comparable, we pose
the condition that $x_1$ cannot be written as a product of more than
$r$ primes $>T^{\epsilon_0/4}$ where $r$ is chosen as the first
positive integer to satisfy $T^{\frac{r\epsilon_0}{4}}>T$. People
call it $r$-almost prime.

These things complicate the problem for the lower bound. But if we
allow this over-counting with extra assumptions, one can give a
lower bound.
\begin{theorem}\label{almost}Let $\pi_k^{\cal P}(T)^r$ denote the number of 5
spheres kissing each other (i.e. the number of orbits) among whose
at least $k$ curvatures are $r$-almost primes and all of whose
curvatures are less than $T$ where $r$ is  the first positive
integer larger than $\frac{4s_0}{\epsilon_0}$ for a fixed large
$s_0$. For any $k\leq 5$,
$$\frac{T^{\delta_\Gamma}}{(\log T)^k}\ll \pi_k^{\cal P}(T)^r.$$
\end{theorem}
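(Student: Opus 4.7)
The plan is to apply the lower-bound inequality in the sieve theorem stated just above to the sequence $A(T)$ with the multiplicative function $g_k$ and the product $P(z)$ of primes in a suitably chosen range, and then to interpret the resulting bound on $S(A(T),P(z))$ geometrically. The key observation is that a lower bound for the count of orbits whose \emph{first} $k$ coordinates are simultaneously $r$-almost prime is \emph{a fortiori} a lower bound for $\pi_k^{\cal P}(T)^r$, which only demands that \emph{some} $k$ of the five coordinates are $r$-almost prime.

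First I would choose the sieve parameters. Set $D=T^{\epsilon_0/4}$, the same cutoff used in the upper-bound proof of Theorem \ref{packing}, so that the uniform estimate $|r_d(A(T))|\ll T^{\delta_\Gamma-\epsilon_0}$ from Proposition \ref{uniform} gives
$$R(D)=\sum_{d<D}|r_d(A(T))|\ll D\cdot T^{\delta_\Gamma-\epsilon_0}\ll T^{\delta_\Gamma-\epsilon_0/2},$$
negligible against the target $T^{\delta_\Gamma}/(\log T)^k$. Next take $z=D^{1/s_0}=T^{\epsilon_0/(4s_0)}$ with $s_0$ fixed large enough that $f(s_0)>0$; for $T$ sufficiently large the correction $O((\log D)^{-1/6})$ is dominated by $f(s_0)/2$. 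By Proposition \ref{prime} and its $k$-variable extension, $g_k(p)=k/p+O(p^{-1-q})$ for $p\notin S$, so a Mertens-type estimate gives
$$\prod_{p\mid P(z)}\bigl(1-g_k(p)\bigr)\;\asymp\;\prod_{\substack{p<z\\ p\notin S}}\!\bigl(1-k/p\bigr)\;\asymp\;(\log z)^{-k}\;\asymp\;(\log T)^{-k}.$$
Combined with $|A(T)|\asymp \chi\asymp T^{\delta_\Gamma}$ from Lemma \ref{est}, the lower-bound sieve yields $S(A(T),P(z))\gg T^{\delta_\Gamma}/(\log T)^k$.

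Second I would translate this sum back into a count of orbits. The sum $S(A(T),P(z))=\sum_{(n,P(z))=1}a_n(T)$ is a weighted count of $\gamma\in\text{Stab}_\Gamma(\xi)\backslash\Gamma$ with $\|\xi\gamma\|_{\max}<(1+\epsilon)T$ (up to the smoothing $\phi_\epsilon$) and with $f_k(\xi\gamma)=v_1\cdots v_k$ coprime to $P(z)$. The coprimality condition forces each $v_i$ $(1\le i\le k)$ to have no prime factor below $z$; since $|v_i|\le\|v\|_{\max}<(1+\epsilon)T$ and $\log T/\log z=4s_0/\epsilon_0$, each such $v_i$ has fewer than $r$ prime factors, hence is $r$-almost prime. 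Thus every $\gamma$ contributing positively to the weighted sum corresponds to a 5-sphere orbit with at least $k$ coordinates $r$-almost prime and all coordinates bounded by a constant multiple of $T$. Since the weights satisfy $w_T(\gamma)\le\int\phi_\epsilon=1$, the unweighted count exceeds $S(A(T),P(z))$, giving $\pi_k^{\cal P}(T)^r\gg T^{\delta_\Gamma}/(\log T)^k$ as claimed (after absorbing the $1+\epsilon$ factor into the implicit constant).

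The main obstacle is the uniform remainder control: one needs $|r_d(A(T))|\ll T^{\delta_\Gamma-\epsilon_0}$ with a single $\epsilon_0$ independent of the square-free modulus $d\le D$, which is precisely where the uniform spectral gap theorem of Bourgain--Gamburd--Sarnak and Salehi Golsefidy--Varj\'u enters via Proposition \ref{uniform}. A secondary technical point is the multiplicativity of $g_k$ across coprime square-free moduli built from primes outside $S$, which follows from Goursat's lemma and Matthews--Vaserstein--Weisfeiler as in Proposition \ref{prime}(1). The remaining estimate $\prod_{p<z}(1-k/p)\asymp(\log z)^{-k}$ is standard but must be invoked for each $k\le 5$.
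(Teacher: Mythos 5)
Your proposal is correct and follows essentially the same route as the paper: the Friedlander--Iwaniec lower-bound sieve with $D=T^{\epsilon_0/4}$ and $z=D^{1/s_0}$, uniform remainder control from Proposition \ref{uniform}, a Mertens-type estimate $\prod_{p<z,\,p\notin S}(1-k/p)\asymp(\log z)^{-k}$, and the translation of coprimality to $P(z)$ into $r$-almost primality using $\log T/\log z<r$. One small imprecision to be aware of (shared by the paper's own treatment): $(v_i,P(z))=1$ does \emph{not} force $v_i$ to have no prime factor below $z$, since primes in the exceptional set $S$ are excluded from $P(z)$, so your clause ``each such $v_i$ has fewer than $r$ prime factors'' should be read against the paper's working notion of $r$-almost prime (at most $r$ prime factors above the sieving level); the paper also separately discards orbit vectors of norm $<z$, an error of size $(T^{\epsilon_0/(4s_0)})^{\delta_\Gamma}$ that is then absorbed into $T^{\delta_\Gamma}/(\log T)^k$, a bookkeeping step that you skipped but that is harmless here.
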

\begin{proof}We know already that $|A(T)|\sim T^{\delta_\Gamma}$.
Note also that since $r_d(A(T))\ll
T^{\delta_\Gamma-\epsilon_0}$,$$R(D)=\sum_{d<D} |r_d(A(T))|\leq
DT^{\delta_\Gamma-\epsilon_0}.$$ Hence if $D=T^{\epsilon_0/4}$,
$R(D)\ll \frac{T^{\delta_\Gamma}}{(\log T)^k}$. Now we estimate
$\Pi_{p|P(z)}(1-g(p))$ using Mertens formula, which is communicated
to us by Pieter Moree. Put $A_k(p)=1-k/p$ if $p>k$ and $A_k(p)=1$
otherwise. Put $B_k=\prod_{p\ge 2}A_k(p)(1-1/p)^{-k}$. Let $\gamma$
denote Euler's constant.\hfil\break We claim that {\it as $x$ tends
to infinity we have $\prod_{p\le x}A_k(p)\sim
B_ke^{-k\gamma}\log^{-k}x$.} Write
$$\prod_{p\le x}A_k(p)=\prod_{p\le x}(1-1/p)^k\prod_{p\le x}A_k(p)(1-1/p)^{-k}.$$
Note that $\log A_k(p)-k\log(1-1/p)=O(k^2/p^2)$ as $p$ tends to
infinity. It thus follows that the latter product converges to $B_k$
as $x$ tends to infinity. Noting that $$\sum_{p>x}|\log
A_k(p)-k\log(1-1/p)|=O(\sum_{p>x}{k^2\over p^2})=O({k^2\over x}),$$
we find that
$$\prod_{p\le x}A_k(p)=\prod_{p\le x}(1-1/p)^kB_k(1+O({k^2\over x})).$$
Using the Mertens theorem  that
$$\prod_{p\le x}(1-1/p)\sim {e^{-\gamma}\over \log x},$$
the claim then follows.

Now take $D=T^{\epsilon_0/4}$ and let $s$ large enough and
$\epsilon\ra 0$ to get for $z=T^{\frac{\epsilon_0}{4s}}$ and the
product $P=P(z)$ of distinct primes less than
$T^{\frac{\epsilon_0}{4s}}$ outside $S$
$$\frac{T^{\delta_\Gamma}}{(\frac{\epsilon_0}{4s})^k(\log T)^k}\ll (f(s)+O(\log D)^{-1/6})|A(T)|\Pi_{p|P(z)}(1-g(p))+ R(D)$$$$\ll S(A((1-\epsilon)T),P)$$$$=\sum_{(n,P)=1}a_n((1-\epsilon)T)
\ll\#\{v=(x_1,\cdots,x_5)\in \xi\cal A^t:||v||_{\text{ max}}<T,
(x_1x_2 \cdots x_k,P)=1\}$$ Since there are only 5 coordinates,
there are only $5Ck$ choices of $k$ coordinates out of 5,
$$\ll\#\{v=(x_1,\cdots,x_5)\in \xi\cal A^t:||v||_{\text{ max}}<T^{\frac{\epsilon_0}{4s}}\}+$$$$5Ck\#\{v=(x_1,\cdots,x_5)\in
\xi\cal A^t:||v||_{\text{ max}}<T, (x_1x_2 \cdots x_k,P)=1,\exists
1\leq i \leq k, x_i>T^{\frac{\epsilon_0}{4s}}\}$$
$$\ll \#\{v=(x_1,\cdots,x_5)\in \xi\cal A^t:||v||_{\text{ max}}<T,
(x_1x_2 \cdots x_k,P)=1,\
x_i>T^{\frac{\epsilon_0}{4s}},i=1,\cdots,k\}$$$$+(T^{\frac{\epsilon_0}{4s}})^{\delta_\Gamma}$$
$$\ll \#\{v=(x_1,\cdots,x_5)\in \xi\cal A^t:||v||_{\text{
max}}<T, T>x_1,x_2,\cdots,x_k\ r\text{-almost
prime}>T^{\frac{\epsilon_0}{4s}} \}$$$$
+(T^{\frac{\epsilon_0}{4s}})^{\delta_\Gamma}\ll \pi_k^{\cal
P}(T)^r.$$
\end{proof}
Note that to get an upper bound, we need to reverse the inequality
$$ \#\{v=(x_1,\cdots,x_5)\in \xi\cal A^t:||v||_{\text{ max}}<T,
(x_1x_2 \cdots x_k,P)=1,\
x_i>T^{\frac{\epsilon_0}{4s}}\}+(T^{\frac{\epsilon_0}{4s}})^{\delta_\Gamma}$$
$$\ll\#\{v=(x_1,\cdots,x_5)\in \xi\cal A^t:||v||_{\text{
max}}<T, T>x_1,x_2,\cdots,x_k\ r\text{-almost
prime}>T^{\frac{\epsilon_0}{4s}} \},$$ which is not obvious to us.
\begin{corollary}Given a bounded primitive integral Apollonian sphere packing $\cal
P$ in $\bR^3$, $\pi_5^{\cal P}(T)^r$ denote the number of 5 spheres
kissing each other whose  curvatures are $r$-almost primes less than
$T$ where $r$ is a fixed positive integer depending only on
Apollonian packing. Then
$$\frac{T^{\delta_\Gamma}}{(\log T)^5}\ll \pi_5^{\cal P}(T)^r.$$
\end{corollary}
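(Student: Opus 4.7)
The plan is to simply apply Theorem \ref{almost} with $k=5$. Indeed, Theorem \ref{almost} already establishes, for every $k\leq 5$, the lower bound
$$\frac{T^{\delta_\Gamma}}{(\log T)^k}\ll \pi_k^{\cal P}(T)^r,$$
where $r$ is chosen as the first positive integer larger than $4s_0/\epsilon_0$ for a fixed large $s_0$. Setting $k=5$ yields exactly the desired inequality. The only subtlety is interpretation: in Theorem \ref{almost}, $\pi_k^{\cal P}(T)^r$ counts the number of mutually tangent $5$-tuples of spheres among which \emph{at least} $k$ curvatures are $r$-almost prime, while in the corollary statement $\pi_5^{\cal P}(T)^r$ refers to tuples \emph{all five} of whose curvatures are $r$-almost primes; but these two notions coincide at $k=5$, so there is no gap.

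So the proof would consist of the single line: by Theorem \ref{almost} applied with $k=5$, the stated lower bound holds. In terms of how the underlying work was set up, the Selberg-style sieve inequality
$$(f(s)+O((\log D)^{-1/6}))|A(T)|\prod_{p\mid P(z)}(1-g(p))+R(D)\leq S(A(T),P(z))$$
is used with $g=g_5$ and the product $P(z)$ taken over primes $p<z=T^{\epsilon_0/(4s)}$ outside the exceptional set $S$. Mertens' formula gives $\prod_{p\mid P(z)}(1-g_5(p))\sim B_5 e^{-5\gamma}(\log T)^{-5}$ (up to the constant depending on $\epsilon_0,s$), while the remainder $R(D)$ with $D=T^{\epsilon_0/4}$ is absorbed into $T^{\delta_\Gamma}(\log T)^{-5}$ by the uniform spectral gap estimate. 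Combining with $|A(T)|\sim T^{\delta_\Gamma}$ produces the $k=5$ instance of the desired bound.

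There is no real obstacle: the corollary is a direct specialization and requires no new input. The only thing to double-check is the definition of $r$; here $r$ is allowed to depend on the packing (through $\epsilon_0$, i.e., through the spectral gap of $\Gamma$) but is absolute in $T$, which matches the hypothesis of the corollary.
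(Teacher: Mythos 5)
Your proposal is correct and takes exactly the same route the paper (implicitly) intends: the corollary is the $k=5$ instance of Theorem \ref{almost}, and your observation that ``at least $5$ of $5$'' coincides with ``all $5$'' removes the only potential ambiguity.
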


{\bf Acknowledgements} The author thanks Pierre Pansu for numerous
conversations and his help. He also thanks E. Breuillard for series
of lectures  about approximate groups, A. Gamburd for the
discussions about uniform spectral gap and H. Oh for useful comments
about the first draft of this paper. He  owes P. Moree much
gratitude for his help in number theory in the last section.

\space  Inkang Kim\\
School of Mathematics\\
     KIAS, Hoegiro 85, Dongdaemun-gu\\
     Seoul, 130-722, Korea\\
     \texttt{inkang\char`\@ kias.re.kr}

\end{document}